\documentclass[twoside]{article} 
\usepackage{graphicx}
\usepackage{amsfonts}
\usepackage[fleqn]{amsmath}
\usepackage{amssymb}
\usepackage{amsthm}
\usepackage{amscd}
\usepackage[T1]{fontenc}
\usepackage{color}
\usepackage{afterpage}  %
\usepackage{fancyhdr}
\theoremstyle{plain}
\newtheorem{theorem}{Theorem}[section]
\newtheorem{lemma}[theorem]{Lemma}
\newtheorem{proposition}[theorem]{Proposition}

\newtheorem{corollary}[theorem]{Corollary}
\theoremstyle{definition}
\newtheorem{definition}[theorem]{Definition}
\newtheorem{example}[theorem]{Example}
\theoremstyle{remark}

\textheight185mm

\setlength{\oddsidemargin}{0pt} \setlength{\evensidemargin}{0pt}
\setlength{\hoffset}{-1in} \addtolength{\hoffset}{3.5cm}
\setlength{\textwidth}{12.5cm} \setlength{\voffset}{-1in}
\addtolength{\voffset}{3cm}
\setcounter{page}{1}                                       
\fancyhead{} \fancyfoot{} \fancyhead[CO]{\small 04.11.2017
}  

\pagestyle{fancy}

\begin{document}

\afterpage{\rhead[]{\thepage} \chead[\small W. A. Dudek and R. A. R. Monzo \ \ \ \ ]
{\small Translatable semigroups \ \ \ \ \ \ \ } \lhead[\thepage]{} }                  

\begin{center}
\vspace*{2pt}
{\Large \textbf{Translatability and translatable semigroups}}\\[30pt]
 {\large \textsf{\emph{Wieslaw A. Dudek \ and \ Robert A. R. Monzo}}}\\[30pt]
\end{center}
 {\footnotesize\textbf{Abstract.} The concept of a $k$-translatable groupoid is explored in depth. Some properties of idempotent $k$-translatable groupoids, left cancellative $k$-translatable groupoids and left unitary $k$-translatable groupoids are proved. Necessary and sufficient conditions are found for a left cancellative $k$-translatable groupoid to be a semigroup.  Any such semigroup is proved to be left unitary and a union of $k$ disjoint copies of cyclic groups of the same order. Methods of constructing $k$-translatable semigroups that are not left cancellative are given.}
\footnote{\textsf{2010 Mathematics Subject Classification:} 20M15, 20N02}
\footnote{\textsf{Keywords:} Semigroup, translatable semigroup, left cancellative semigroup, idempotent.}

\section{Introduction}

This is the third in a series of four articles related to the fine algebraic structure of quadratical quasigroups, the general theory of which is still in its infancy. The main aim of these papers is to stimulate interest and further research in the algebraic theory of quadratical quasigroups. This theory has its geometrical implications \cite{Vol1}.

The focus of this paper is on translatable groupoids. In our previous paper \cite{DM2} it has been proved that the property of translatability can be hidden in the Cayley tables of certain finite quadratical quasigroups, including all those "induced" by $\mathbb{Z}_n$, the additive group of integers modulo $n$, where $n=4t+1$ for some positive $t$. Here we diverge from the study of quadratical quasigroups and explore some of the properties of translatable groupoids and translatable semigroups.

We begin in Section $2$ by listing three different conditions, each of which is necessary and sufficient for a finite groupoid to be translatable (Lemma \ref{L-basic}). Then we list necessary and sufficient conditions on a left cancellative translatable groupoid for it to be idempotent, elastic, medial, left or right distributive, commutative or associative (Lemma \ref{L-cond}). It is then proved that a groupoid has a $k$-translatable sequence, with respect to a particular ordering of its elements, if and only if it has a $k$-translatable sequence with respect to $n$ different orderings (Corollary \ref{C-order}). Essentially this result shows that, in a $k$-translatable groupoid, moving all the elements of the ordering of the Cayley table up one place (or down one place) preserves $k$-translatability. This result is applied later to prove that left cancellative $k$-translatable semigroups are left unitary (Theorem \ref{T-reord}) It is also proved that a left cancellative $k$-translatable groupoid of order $n$ is alterable if and only if $k^2$ is equivalent to $n-1$ modulo $n$ (Corollary \ref{C-alter}).

Idempotent $k$-translatable groupoids are considered next. We prove that such groupoids are left cancellative. Any two idempotent groupoids of the same order, each $k$-translatable for the same value of $k$, are proved to be isomorphic (Theorem \ref{izo}). An idempotent $k$-translatable groupoid of order $n$ is proved to be right distributive if and only if it is left distributive and, if the greatest common divisor of $k$ and $n$ is 1, it is a quasigroup Proposition \ref{P2} and Theorem \ref{T-quasi}).

Section $3$ explores left unitary translatable groupoids. Two such groupoids of the same order are isomorphic (Theorem \ref{T-izo}) and any such groupoid is medial but not right distributive (Theorem \ref{med}). The value of $k$ in a left unitary $k$-translatable groupoid determines whether the groupoid has various properties such as paramediality, elasticity left distributivity and right or left modularity (Theorem \ref{para}).

In Section $4$ we prove that a left cancellative $k$-translatable groupoid of order $n$ has a $k^*$-translatable dual groupoid if and only if $kk^*$ is equivalent to $1$ modulo $n$ (Theorem \ref{dual}). 
		
Section $5$ deals with $k$-translatable semigroups. In one of the main results of this paper, we find necessary and sufficient conditions on a left cancellative $k$-translatable groupoid for it to be a semigroup (Theorem \ref{semi}). Any such semigroup has a left neutral element (Corollary \ref{lmonoid}) and can be re-ordered so that it is a left unitary $k$-translatable semigroup (Theorem \ref{T-reord}). Two left cancellative $k$-translatable semigroups of the same order are isomorphic (Corollary \ref{C-izo}). A left cancellative $k$-translatable semigroup is a disjoint union of cyclic groups of the same order (Theorem \ref{decomp}). A left cancellative $(n-1)$-translatable semigroup of order $n$ and a paramedial left cancellative $k$-translatable semigroup of order $n$ are isomorphic to the additive group of integers modulo $n$. The section ends with various results that allow us to calculate $k$-translatable sequences that yield semigroups that are not left cancellative (Theorem \ref{Tsem} to Corollary \ref{nsemi}).

In Section $6$ we find two different conditions under which $t$ disjoint copies of a left unitary $k$-translatable semigroup $Q$ of order $n$ can be embedded in a left unitary translatable semigroup $G$ of order $tn$, such that $G$ is a disjoint union of $t$ copies of $Q$. In both cases $k=tq$. If $(k+k^2)\equiv 0({\rm mod}\,tn)$ then $G$ is $k$-translatable (Theorem \ref{T-62}). If $n=k+k^2$ then $G$ is $(k+(t-1)n)$-translatable (Theorem \ref{T-63}).

\section*{\centerline{2. Translatable groupoids}}\setcounter{section}{2}\setcounter{theorem}{0}

For simplicity we assume that all groupoids considered in this note are finite and have form $Q=\{1,2,\ldots,n\}$ with the {\it natural ordering} $1,2,\ldots,n$, which is always possible by renumeration of elements. Moreover, instead of $i\equiv j({\rm mod}\,n)$ we will write $[i]_n=[j]_n$. Additionally, in calculations of modulo $n$, we assume that $0=n$. 

\begin{definition} A finite groupoid $Q$ is called {\it $k$-translatable}, where $1\leqslant k< n$, if its multiplication table is obtained by the following rule: If the first row of the multiplication table is $a_1,a_2,\ldots,a_n$, then the $q$-th row is obtained from the $(q-1)$-st row by taking the last $k$ entries in the $(q-1)-$st row and inserting them as the first $k$ entries of the $q$-th row and by taking the first $n-k$ entries of the $(q-1)$-st row and inserting them as the last $n-k$ entries of the $q$-th row, where $q\in\{2,3,\ldots,n\}$. Then the (ordered) sequence $a_1,a_2,\ldots,a_n$ is called a {\it $k$-translatable sequence} of $Q$ with respect to the ordering $1,2,\ldots,n$. A groupoid is called  {\it translatable} if it has a $k$-translatable sequence for some $k\in\{1,2,\ldots,n-1\}$. 
\end{definition}
 							
It is important to note that a $k$-translatable sequence of a groupoid $Q$ depends on the ordering of the elements in the multiplication table of $Q$. A groupoid may be $k$-translatable for one ordering but not for another.

\begin{example}\label{Ex1} Consider the following groupoids:
 {\small
$$
\begin{array}{lcccr}
\arraycolsep=1.5mm   \arraycolsep=1.2mm
\begin{array}{c|cccccc}						
\cdot&1&	2&	3&	4\\ \hline
1&	1&	2&	3& 4\\
2&	2&	3&	4&	1\\
3&	3&	4&	1&	2\\
4&	4&	1&	2&	3
\end{array}&
\ \ \ \ \ & \arraycolsep=1.2mm
\begin{array}{c|cccccc}
\cdot&	1&	3&	4&	2\\ \hline
1&	1&	3&	4&	2\\
3&	3&	1&	2&	4\\
4&	4&	2&	3&	1\\
2&	2&	4&	1&	3
\end{array}&
\ \ \ \ \ &\arraycolsep=1.2mm
\begin{array}{c|ccccc}
\cdot&1&2&3&4\\ \hline
1&1&4&3&2\\
2&3&2&1&4\\
3&1&4&3&2\\
4&3&2&1&4
\end{array}
\end{array}
$$}

The first table determines a $3$-translatable semigroup isomorphic to the additive group $\mathbb{Z}_4$. The second table says that after change of ordering this semigroup is not translatable. The last table determines an idempotent $2$-translatable groupoid which is not a semigroup.
\end{example}

On the other hand, a groupoid with the operation $x\cdot y=a$, where $a$ is fixed, is a $k$-translatable semigroup for each $k$, but the semigroup $(Q,\cdot)$ with the operation  
$$
x\cdot y=\left\{\begin{array}{llll} 1& {\rm if}\ x+y \ {\rm is \ even},\\ 
2& {\rm if}\ x+y \ {\rm is\ odd}
\end{array}\right.
$$
is $k$-translatable for each odd $k<n$, if $n$ is even, and $k$-translatable for each even $k$, if $n$ is odd. All these groupoids are easily proved to be semigroups.

\begin{lemma}
For a fixed ordering a left cancellative groupoid may be $k$-transla\-ta\-ble for only one value of $k$.
\end{lemma}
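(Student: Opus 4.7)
The plan is to argue by contradiction: suppose $Q$ is simultaneously $k$-translatable and $k'$-translatable with respect to the fixed ordering $1,2,\ldots,n$, where $k,k'\in\{1,\ldots,n-1\}$ and $k\neq k'$. Let $a_1,a_2,\ldots,a_n$ denote the first row of the Cayley table, i.e.\ $a_j=1\cdot j$.

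Next, I would compute the second row in two different ways. By the definition of $k$-translatability, the $(2,j)$-entry of the table equals $a_{[j-k]_n}$, and by the definition of $k'$-translatability, the same entry equals $a_{[j-k']_n}$. Equating these for every $j$ yields $a_i = a_{[i+d]_n}$ for all $i$, where $d=k-k'$. Since $1\leqslant k,k'\leqslant n-1$ and $k\neq k'$, we have $d\not\equiv 0\pmod n$, so the relation is a nontrivial cyclic shift of the index.

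Iterating this identity shows that the values $a_i$ are constant on the cosets of the subgroup $\langle d\rangle\leqslant\mathbb{Z}_n$, which is a nontrivial subgroup; hence there exist indices $i\neq j$ in $\{1,\ldots,n\}$ with $a_i=a_j$. This translates into $1\cdot i = 1\cdot j$ with $i\neq j$, contradicting left cancellativity of $Q$. Therefore $k=k'$, as required.

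The argument is essentially bookkeeping, and the only point that needs care is the index arithmetic modulo $n$ (in particular the convention $0=n$ adopted in the paper), so the sole potential obstacle is writing the cyclic-shift identity $a_i=a_{[i+d]_n}$ unambiguously; once this is in place, the appeal to left cancellation is immediate.
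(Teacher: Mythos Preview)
Your argument is correct and follows essentially the same idea as the paper's proof: both compute an entry of the second row under each of the two translatability hypotheses and then invoke injectivity of $j\mapsto a_j$ (which is exactly left cancellativity at the element $1$). The paper's version is slightly more economical---it compares only the single entry $2\cdot 1=a_{[n-k+1]_n}=a_{[n-k'+1]_n}$ and concludes $k=k'$ directly, without the detour through the cyclic-shift relation $a_i=a_{[i+d]_n}$ and cosets of $\langle d\rangle$.
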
 
\begin{proof}
By renumeration of elements we can assume that a groupoid $Q$ of order $n$ has a natural ordering $1,2,\ldots,n$.
If the first row of the multiplication table of such groupoid has the form $a_1,a_2,\ldots,a_n$, then all $a_i$ are different. The first element of the second row is equal to $a_{n-k+1}$ if a groupoid is $k$-translatable, or $a_{n-t+1}$ if it is $t$-translatable. So, $1\cdot (n-k+1)=a_{n-k+1}=2\cdot 1=a_{n-t+1}=1\cdot (n-t+1)$, which, by left cancellativity, implies $n-k+1=n-t+1$, and consequently $k=t$. 
\end{proof}
Obviously, for another ordering it may be $k$-translatable for some other value of $k$, but in some cases the change of ordering preserves $k$-translatability.

\medskip
We start with the following simple observation.
\begin{lemma}\label{L-lc}
A $k$-translatable groupoid containing at least one left cancellable element is left cancellative.
\end{lemma}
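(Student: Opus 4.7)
The plan is to exploit the fact that every row of the Cayley table of a $k$-translatable groupoid is obtained from the first row by a cyclic shift: specifically, iterating the defining rule shows that the $i$-th row is the cyclic shift of the first row $a_1,a_2,\ldots,a_n$ by $(i-1)k$ positions, so that
\[
i\cdot j \;=\; a_{[j-(i-1)k]_n}
\]
for all $i,j\in Q$. This is the one small calculation I would do up front; it follows immediately by induction on $i$ from the definition, using the convention $0=n$.

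Next, let $m$ be the given left cancellable element. Left cancellability of $m$ means that the map $x\mapsto m\cdot x$ is injective, i.e.\ the $m$-th row of the Cayley table contains no repeated entries. But the $m$-th row is a cyclic permutation of $a_1,\ldots,a_n$, so the entries $a_1,\ldots,a_n$ themselves are pairwise distinct.

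Once this is known, the conclusion is immediate for every element. For an arbitrary $i\in Q$, if $i\cdot x=i\cdot y$, then by the shift formula $a_{[x-(i-1)k]_n}=a_{[y-(i-1)k]_n}$; since the $a_r$ are pairwise distinct, this forces $[x-(i-1)k]_n=[y-(i-1)k]_n$, and hence $x=y$.

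There is no real obstacle here; the only thing one needs to be careful about is bookkeeping the indices modulo $n$ correctly when iterating the translatability rule to obtain the shift formula. Everything else is a direct consequence of the fact that in a cyclic shift of a sequence, having no repetitions is a property of the underlying multiset, so it either holds in every cyclic shift or in none.
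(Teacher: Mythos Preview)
Your proof is correct and follows essentially the same approach as the paper's. The paper argues more tersely---observing that the $i$-th row has distinct entries and hence, by $k$-translatability, so does every row---whereas you make the same point explicit via the shift formula $i\cdot j=a_{[j-(i-1)k]_n}$ (which coincides with the paper's Lemma~\ref{L-basic}(i)).
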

\begin{proof}
Indeed, let $i$ be the left cancellable element of a $k$-translatable groupoid $Q$. Then, $i$-th row of the multiplication table of this groupoid contains different elements. By $k$-translability other rows also contain different elements. So, for all $j,s,t\in Q$ from $j\cdot s=j\cdot t$ it follows that $s=t$. Hence, $Q$ is left cancellative.
\end{proof}

The proofs of the following two Lemmas are straightforward and are omitted.

\begin{lemma}\label{L-basic}
Let $a_1,a_2,\ldots,a_n$ be the first row of the multiplication table of a groupoid $Q$ of order $n$. Then $Q$ is $k$-translatable if and only if for all $i,j\in Q$ the following $($equivalent$)$ conditions are satisfied:
\begin{enumerate}
\item[$(i)$] \ $i\cdot j=a_{[(i-1)(n-k)+j]_n}=a_{[k-ki+j]_n}$,
\item[$(ii)$] \ $i\cdot j=[i+1]_n\cdot [j+k]_n$,
\item[$(iii)$] \ $i\cdot [j-k]_n=[i+1]_n\cdot j$.
\end{enumerate} 
\end{lemma}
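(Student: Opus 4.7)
The plan is to translate the verbal definition of $k$-translatability into a closed-form indexing rule and then verify the three conditions are equivalent reformulations of it.

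First I would unpack the definition. If $b_1,\ldots,b_n$ is the $(q-1)$-st row, the $q$-th row is $b_{n-k+1},\ldots,b_n,b_1,\ldots,b_{n-k}$. Writing $q_{i,j}$ for the entry in row $i$, column $j$, this rule says precisely that
\[
q_{i,j}=q_{i-1,[j-k]_n}
\]
(checking both cases $j\leqslant k$ and $j>k$ and using the convention $[0]_n=n$). Since $q_{1,j}=a_j$, an easy induction on $i$ gives $q_{i,j}=a_{[j-(i-1)k]_n}$. Using $-(i-1)k\equiv (i-1)(n-k)\pmod n$ and $-(i-1)k\equiv k-ki\pmod n$, we obtain
\[
i\cdot j=a_{[(i-1)(n-k)+j]_n}=a_{[k-ki+j]_n},
\]
which is (i). Conversely, if (i) holds for every $i,j$, then substituting $i$ and $i-1$ into the formula shows that row $i$ is the cyclic right shift of row $i-1$ by $k$, so $Q$ is $k$-translatable. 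This settles the equivalence of $k$-translatability with (i).

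Next I would derive (ii) and (iii) directly from (i). For (ii), apply (i) with $(i,j)$ replaced by $([i+1]_n,[j+k]_n)$; the subscript becomes $[k-k(i+1)+j+k]_n=[k-ki+j]_n$, which matches $i\cdot j$ by (i). Condition (iii) is then just (ii) after relabelling $j\mapsto [j-k]_n$: indeed $[j-k]_n+k\equiv j\pmod n$, so $i\cdot[j-k]_n=[i+1]_n\cdot j$. For the reverse directions, (ii) applied to all pairs $(i,j)$ lets one propagate row $1$ to every later row via the shift; more precisely, iterating (ii) yields $i\cdot j=1\cdot[j-(i-1)k]_n=a_{[j-(i-1)k]_n}$, which is (i). Condition (iii) is equivalent to (ii) by the substitution already noted.

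The only mild obstacle is bookkeeping with the modular indices, in particular the convention $[0]_n=n$ and the two equivalent rewrites $-(i-1)k\equiv (i-1)(n-k)\equiv k-ki\pmod n$; once these are recorded, each implication is a one-line substitution. I would present the argument as: (definition) $\Leftrightarrow$ (i), then (i) $\Rightarrow$ (ii) $\Rightarrow$ (iii) $\Rightarrow$ (ii) $\Rightarrow$ (i), each by a direct index computation.
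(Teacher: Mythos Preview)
Your argument is correct. Note, however, that the paper explicitly omits the proof of this lemma (``The proofs of the following two Lemmas are straightforward and are omitted''), so there is no published argument to compare against; your derivation --- unpacking the row-shift definition into the recurrence $q_{i,j}=q_{i-1,[j-k]_n}$, solving it as $q_{i,j}=a_{[j-(i-1)k]_n}$, and then checking the index identities --- is exactly the kind of straightforward verification the authors had in mind.
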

Recall that a groupoid $Q$ is called

$\bullet$ {\em alterable} if $i\cdot j=w\cdot z$ implies $j\cdot w=z\cdot i$,

$\bullet$ {\em bookend} if and only if $(j\cdot i)\cdot (i\cdot j)=i$, 

$\bullet$ {\em paramedial} if and only if $(i\cdot j)\cdot (w\cdot z)=(z\cdot j)\cdot (w\cdot i)$,

$\bullet$ {\em strongly elastic} if and only if $i\cdot (j\cdot i)=(i\cdot j)\cdot i=(j\cdot i)\cdot j$,

$\bullet$ {\em left modular} if and only if $(i\cdot j)\cdot z=(z\cdot j)\cdot i$,

$\bullet$ {\em right modular} if and only if $i\cdot (j\cdot z)=z\cdot (j\cdot i)$\\
for all $i,j,w,z \in Q$.

\begin{lemma}\label{L-cond}
For a $k$-translatable left cancellative groupoid $Q$ of order $n$ the following statements are valid:
\begin{enumerate}
\item[$(i)$] \ $a_i=a_j$ if and only if $i=j$,
\item[$(ii)$] \ $Q$ is idempotent if and only if $i=a_{[k-ki+i]_n}$ for all $i\in Q$, 
\item[$(iii)$] \ if $Q$ is idempotent then $i\cdot j=t$ if and only if $[j-ki]_n=[t-kt]_n$ for all $i,j,t\in Q$, 
\item[$(iv)$] \ $Q$ is elastic if and only if $[i+ki]_n=[(j\cdot i)+k(i\cdot j)]_n$ for all $i,j\in Q$,
\item[$(v)$] \ $Q$ is strongly elastic if and only if \ $[i+ki]_n=[(j\cdot i)+k(i\cdot j)]_n$ and $[i+kj]_n=[(i\cdot j)+k(i\cdot j)]_n$ for all $i,j\in Q$,
\item[$(vi)$] \ $Q$ is bookend if and only if $i=a_{[k-k(j\cdot i)+(i\cdot j)]_n}$ for all $i,j\in Q$, 
\item[$(vii)$]v \ $Q$ is left distributive if and only if $[(i\cdot j)+k(s\cdot i)]_n=[(s\cdot j)+ks]_n$ for all $i,j,s\in Q$, 
\item[$(viii)$] \ $Q$ is right distributive if and only if $[s+k(i\cdot s)]_n=[(j\cdot s)+k(i\cdot j)]_n$ for all $i,j,s\in Q$, 
\item[$(ix)$] \ $Q$ is medial if and only if $[(w\cdot z)+k(i\cdot w)]_n=[(j\cdot z)+k(i\cdot j)]_n$ for all $i,j,w,z\in Q$, 
\item[$(x)$] \ $Q$ is alterable if and only if $[j+kw]_n=[z+ki]_n$ implies $[w+kz]_n=[i+kj]_n$ for all $i,j,w,z\in Q$,
\item[$(xi)$] \ $Q$ is commutative if and only if $k=n-1$,
\item[$(xii)$] \ $Q$ is associative if and only if $[i+kj]_n=[(s\cdot i)+k(j\cdot s)]_n$ for all $i,j,s\in Q$.
\end{enumerate}
\end{lemma}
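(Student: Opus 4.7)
The proof proceeds by reducing each condition to an arithmetic congruence modulo $n$, using Lemma \ref{L-basic}(i) as the bridge. First I would establish part (i): the first row of the table is $(1\cdot 1,\ldots,1\cdot n)=(a_1,\ldots,a_n)$, and left cancellativity applied to the element $1$ forces these to be distinct, so $a_i=a_j$ if and only if $i=j$. This fact will justify all the subsequent ``cancellations of the outer $a$.''

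For parts (ii)--(ix) and (xii), the strategy is purely mechanical. By Lemma \ref{L-basic}(i), any product $x\cdot y$ in $Q$ equals $a_{[k-kx+y]_n}$. An identity of the form $u_1\cdot v_1=u_2\cdot v_2$ (where the $u_i,v_i$ may themselves be compound products already interpreted as elements of $Q$) becomes $a_{[k-ku_1+v_1]_n}=a_{[k-ku_2+v_2]_n}$, which by (i) is equivalent to $[v_1-ku_1]_n=[v_2-ku_2]_n$, equivalently $[v_1+ku_2]_n=[v_2+ku_1]_n$. Applying this template to each identity yields the stated formula: idempotence $i\cdot i=i$ unwinds to (ii); for (iii) one additionally substitutes the idempotent expression for $t$ on the right-hand side and cancels the outer $a$; elasticity, bookend, left and right distributivity, mediality, and associativity each follow by a single application, while strong elasticity (v) needs two independent equalities $i\cdot(j\cdot i)=(i\cdot j)\cdot i$ and $i\cdot(j\cdot i)=(j\cdot i)\cdot j$, producing the stated conjunction.

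Parts (x) and (xi) are slightly different in flavour. For (x), the hypothesis $i\cdot j=w\cdot z$ translates by the same procedure to $[j+kw]_n=[z+ki]_n$, and the conclusion $j\cdot w=z\cdot i$ to $[w+kz]_n=[i+kj]_n$, so alterability is exactly the implication between these two congruences. For (xi), commutativity $i\cdot j=j\cdot i$ becomes $[j-ki]_n=[i-kj]_n$, i.e.\ $[(k+1)(j-i)]_n=0$ for all $i,j\in Q$. Specialising to $j-i=1$ forces $(k+1)\equiv 0\pmod n$, and since $1\leqslant k<n$ this pins down $k=n-1$; the converse is immediate.

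The main obstacle is not conceptual but notational: one must consistently track which factor is being multiplied by $k$ and which carries coefficient $+1$, because the stated formulas in (iv)--(ix) and (xii) are congruences between \emph{sums} rather than differences. A careless rearrangement (forgetting that $[x-y]_n=[z-w]_n$ is equivalent to $[x+w]_n=[y+z]_n$) would produce the right condition with the wrong labels. Once that bookkeeping is done, no deeper argument is needed, and all twelve equivalences follow uniformly from Lemma \ref{L-basic}(i) together with part (i).
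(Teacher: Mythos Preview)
Your proposal is correct and is precisely the routine verification the paper has in mind; indeed, the paper omits the proof entirely, saying it is ``straightforward,'' and your mechanism---apply Lemma~\ref{L-basic}(i) to rewrite each identity as an equality of subscripts, then use part (i) to cancel the outer $a$---is exactly that straightforward argument. There is nothing to add.
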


\begin{lemma}\label{L-3} The sequence $a_1,a_2,\ldots,a_n$  is a $k$-translatable sequence of $Q$  with respect to the ordering $1,2,\ldots,n$  if and only if $a_k,a_{k+1},\ldots,a_n,a_1,\ldots,a_{k-1}$ is a $k$-translatable sequence of $Q$  with respect to the ordering $n,1,2,\ldots,n-1$. 
\end{lemma}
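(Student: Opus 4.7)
The plan is to apply Lemma~\ref{L-basic}(i) in each ordering and reduce the statement to a single modular-arithmetic identity. Denote the new ordering by $o_1,\ldots,o_n$, so that $o_1=n$ and $o_p=p-1$ for $p\ge 2$; equivalently $o_p\equiv p-1\pmod n$ (using the convention $[0]_n=n$). This uniform congruence makes the boundary case $p=1$ indistinguishable from the generic one, avoiding any case analysis.

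For the forward direction, I would assume $i\cdot j=a_{[k-ki+j]_n}$ for all $i,j\in Q$ and first read off the first row of the new table: since $kn\equiv 0\pmod n$,
\begin{equation*}
o_1\cdot o_q \;=\; n\cdot o_q \;=\; a_{[k-kn+o_q]_n} \;=\; a_{[k+q-1]_n},
\end{equation*}
which, as $q$ ranges over $1,\ldots,n$, produces exactly the sequence $a_k,a_{k+1},\ldots,a_n,a_1,\ldots,a_{k-1}$. Setting $b_q:=a_{[k+q-1]_n}$, it then remains to verify the defining identity $o_p\cdot o_q=b_{[k-kp+q]_n}$ for $k$-translatability with respect to $o_1,\ldots,o_n$. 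I would establish this by the single chain
\begin{equation*}
o_p\cdot o_q \;=\; a_{[k-ko_p+o_q]_n} \;=\; a_{[k-k(p-1)+(q-1)]_n} \;=\; a_{[2k-1-kp+q]_n} \;=\; b_{[k-kp+q]_n},
\end{equation*}
invoking Lemma~\ref{L-basic}(i), the congruence $o_p\equiv p-1\pmod n$, and the definition of $b_m$.

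The converse is the same chain read backwards: assuming $o_p\cdot o_q=b_{[k-kp+q]_n}$ and substituting $p=[i+1]_n$, $q=[j+1]_n$ (so that $o_p=i$ and $o_q=j$), one unwinds $b_m=a_{[k+m-1]_n}$ to obtain $i\cdot j=b_{[1-ki+j]_n}=a_{[k-ki+j]_n}$, which by Lemma~\ref{L-basic}(i) is the $k$-translatability of $a_1,\ldots,a_n$ with respect to $1,2,\ldots,n$. The main obstacle is purely bookkeeping: keeping track of the convention $[0]_n=n$ throughout; the uniform relation $o_p\equiv p-1\pmod n$ handles this automatically, so no case analysis between interior and wraparound positions is needed.
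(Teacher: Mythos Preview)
Your proof is correct and follows essentially the same approach as the paper: both arguments are direct index computations via Lemma~\ref{L-basic}, with the only cosmetic difference that you work throughout with characterization~(i) (the explicit formula $i\cdot j=a_{[k-ki+j]_n}$), whereas the paper uses characterization~(ii) (the shift relation $i\cdot j=[i+1]_n\cdot[j+k]_n$) to establish translatability and then invokes~(i) to read off the first row. Your uniform handling of the wraparound via $o_p\equiv p-1\pmod n$ is slightly cleaner than the paper's separate notation for the two directions.
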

\begin{proof} $(\Rightarrow)$:
The new ordering can be written as $1',2',\ldots,n'$, where $i'=[i-1]_n$. Then, since $Q$ is $k$-translatable with respect to the ordering $1,2,\ldots,n$, by Lemma \ref{L-basic}$(ii)$, $[i'+1]_n\cdot[j'+k]_n=i\cdot[j-1+k]_n=[i-1]_n\cdot [j-1]_n=i'\cdot j'$.
Hence, $Q$ is $k$-translatable with respect to the ordering $n,1,2,\ldots,n-1$, with $k$-translatable sequence $n\cdot n, n\cdot 1,n\cdot 2,\ldots,n\cdot (n-1)$. That is, using Lemma \ref{L-basic}$(i)$, the $k$-translatable sequence is $a_k,a_{k+1},\ldots,a_n,a_1, a_2,\ldots,a_{k-1}$.

 $(\Leftarrow)$: The order $n,1,2,\ldots,n-1$ can be written as $1',2',\ldots,(n-1)',n'$, where $i'=[i-1]_n$ for all $i\in\{1,2,\ldots,n\}$. The order $1,2,\ldots,n$ can be written as $i=[i'+1]_n$. Then, by Lemma \ref{L-basic}$(ii)$, we have $[i+1]_n\cdot [j+k]_n=[i+2]'_n\cdot [j+k+1]'_n=[i+1]'_n\cdot [j+1]'_n=i\cdot j$ and so $1\cdot 1,1\cdot 2,\ldots,1\cdot n$ is a $k$-translatable sequence with respect to the order $1,2,\ldots,n$. But $a_k,a_{k+1},\ldots,a_n,a_1,\ldots,a_{k-1}$ is the $k$-translatable sequence with respect to the order $1',2',\ldots,n'$. Let's call this sequence $b_1,b_2,\ldots,b_n$. Note, then, that $b_i=a_{[i-1+k]_n}$. So we have $1\cdot j=[i+1]'_n\cdot [j+1]'_n=2'\cdot [j+1]'_n=1'\cdot [j+1-k]'_n=b_{[j+1-k]_n}=a_j$. This implies that the sequence $a_1,a_2,\ldots,a_n$ is a $k$-translatable sequence with respect to the order $1,2,\ldots,n$.  
\end{proof}

\begin{corollary}\label{C-order}
A $k$-translatable groupoid of order $n$ has at least $n$ \ $k$-translatable sequences, each with respect to a different order.
\end{corollary}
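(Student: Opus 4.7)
The plan is to iterate Lemma \ref{L-3}. Starting from the natural ordering $\sigma_0 = 1,2,\ldots,n$ and its $k$-translatable sequence $a_1,a_2,\ldots,a_n$, Lemma \ref{L-3} produces a $k$-translatable sequence $a_k, a_{k+1},\ldots,a_n,a_1,\ldots,a_{k-1}$ with respect to the cyclically rotated ordering $\sigma_1 = n,1,2,\ldots,n-1$. The key observation is that Lemma \ref{L-3} is purely a statement about relabelling the elements by a cyclic shift of the ordering, so nothing in its proof depends on the particular labels being the integers $1,\ldots,n$; it applies to any ordering whatsoever.

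Concretely, I would formalise this by induction on $m$. Let $\sigma_m$ denote the ordering obtained from $\sigma_0$ by moving the last element to the front $m$ times, so $\sigma_m = n-m+1, n-m+2, \ldots, n, 1, 2, \ldots, n-m$. The induction hypothesis is that $Q$ admits a $k$-translatable sequence with respect to $\sigma_m$. Applying Lemma \ref{L-3} to the ordering $\sigma_m$ (after relabelling its entries as $1',2',\ldots,n'$ so the lemma applies verbatim) yields a $k$-translatable sequence with respect to $\sigma_{m+1}$. Iterating from $m=0$ to $m=n-1$ produces $n$ $k$-translatable sequences, one for each of the orderings $\sigma_0, \sigma_1, \ldots, \sigma_{n-1}$.

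It remains to check that the $n$ orderings $\sigma_0, \sigma_1, \ldots, \sigma_{n-1}$ are pairwise distinct. This is immediate: they are the $n$ cyclic rotations of the sequence $1,2,\ldots,n$ of distinct elements, so they differ, for example, in the identity of their first entry (which is $n-m+1$ for $\sigma_m$, $m \geq 1$, and $1$ for $\sigma_0$). Hence $Q$ has at least $n$ $k$-translatable sequences, each with respect to a different ordering.

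There is essentially no obstacle here beyond bookkeeping: the entire content of the corollary is an $n$-fold iteration of Lemma \ref{L-3}, together with the observation that cyclic rotations of the ordering $1,2,\ldots,n$ yield $n$ genuinely distinct orderings.
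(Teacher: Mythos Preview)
Your proposal is correct and is precisely the argument the paper has in mind: the corollary is stated without proof immediately after Lemma~\ref{L-3}, and the intended justification is exactly the $n$-fold iteration of that lemma over the cyclic rotations of the ordering, as you carry out.
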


A groupoid $Q$ is called {\it right solvable $($left solvable$)$} if for any $\{a,b\} \subseteq Q$ there exists a unique $x\in G$ such that $ax = b$ ($xa = b$). It is a {\it quasigroup} if it is left and right solvable. 

\begin{proposition}\label{P3}
An alterable, right solvable and right distributive groupoid is an idempotent quasigroup.
\end{proposition}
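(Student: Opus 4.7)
The plan is to extract left cancellativity from right solvability, combine it with right distributivity to force idempotence, and then exploit alterability twice: once to produce left solutions (via right solvability), and once to guarantee their uniqueness.

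First I would observe that right solvability makes each left translation $L_a \colon x \mapsto a\cdot x$ injective (by the uniqueness of the solution to $a\cdot x = b$), so $Q$ is left cancellative. Then I would apply right distributivity in the form $(a\cdot b)\cdot b = (a\cdot b)\cdot (b\cdot b)$ (take $i=a$ and $j=s=b$) and cancel $a\cdot b$ on the left to obtain $b = b\cdot b$. Hence $Q$ is idempotent.

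For left solvability, fix $a, b \in Q$ and use right solvability to pick $y \in Q$ satisfying $b\cdot y = a\cdot b$. Alterability, applied to this equality with $i = b$, $j = y$, $w = a$, $z = b$, yields $y\cdot a = b\cdot b$, and the right-hand side equals $b$ by idempotence. Thus $y\cdot a = b$, so $Q$ is left solvable. Uniqueness comes from a second use of alterability: if $y_1\cdot a = y_2\cdot a$, then taking $i=y_1$, $j=a$, $w=y_2$, $z=a$ gives $a\cdot y_2 = a\cdot y_1$, whence left cancellation forces $y_1 = y_2$. Combined with right solvability, this makes $Q$ an idempotent quasigroup.

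The main obstacle is spotting the correct instance of alterability in the left-solvability step. Once one realises that the equation $b\cdot y = a\cdot b$ is precisely what right solvability hands us for free, and that alterability flips it into $y\cdot a = b\cdot b = b$, everything else reduces to bookkeeping with the already-established left cancellation.
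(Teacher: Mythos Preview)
Your proof is correct and follows essentially the same route as the paper's: right solvability gives left cancellativity, right distributivity then forces idempotence, and the left-solvability argument (solve $b\cdot y=a\cdot b$ by right solvability, then flip via alterability to $y\cdot a=b\cdot b=b$) together with the alterability-based uniqueness is exactly what the paper does, up to relabelling of variables.
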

\begin{proof}
A right solvable groupoid $Q$ is left cancellative. Right distributivity and left cancellativity give idempotency.

Then the right solvability means that for all $z,w\in Q$ there are uniquely determined elements $q,q'\in Q$ such that $z\cdot q=w$ and $w\cdot q'=z\cdot w$. Then, using alterability, $w=w\cdot w=q'\cdot z$. The element $q'$ is uniquely determined. Indeed, if $w=q'\cdot z=q''\cdot z$, then, by alterability, $z\cdot q''=z\cdot q'$. This implies $q'=q''$.
Hence $Q$ is also left solvable and, hence, it is a quasigroup.
\end{proof}

\begin{lemma}\label{L-idtrans}
For fixed $k\ne 1$ and $n$ there is an idempotent $k$-translatable groupoid of order $n$. It is left cancellative. 
\end{lemma}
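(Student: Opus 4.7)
The plan is to construct the first row $a_1,\ldots,a_n$ of the Cayley table explicitly and then invoke Lemma~\ref{L-basic}(i) to fill in the rest of the table.

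Any $k$-translatable groupoid on $Q=\{1,\ldots,n\}$ must satisfy $i\cdot j=a_{[k-ki+j]_n}$, so the idempotency condition $i\cdot i=i$ collapses to the single requirement $a_{[k-(k-1)i]_n}=i$ for every $i\in Q$. Introducing the affine map $\phi(i):=[k-(k-1)i]_n$, this rewrites as $a\circ\phi=\mathrm{id}_Q$, so the first row is forced to be $a=\phi^{-1}$. I expect the hypothesis $k\neq 1$ to be accompanied by the standing assumption $\gcd(k-1,n)=1$ (without which $\phi$ is not injective and no such groupoid exists: a short two-line argument via condition (ii) of Lemma~\ref{L-basic} derives a contradiction, as one can already see from $n=4,\ k=3$). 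Under this assumption $\phi$ is a permutation of $Q$, and I would set $a_m:=\phi^{-1}(m)$, or in closed form $a_m=[(k-1)^{-1}(k-m)]_n$.

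Having fixed the first row, I would \emph{define} the multiplication on $Q$ by $i\cdot j:=a_{[k-ki+j]_n}$ and verify three things. First, $k$-translatability via condition (ii) of Lemma~\ref{L-basic}: the computation $[i+1]_n\cdot[j+k]_n=a_{[k-k(i+1)+j+k]_n}=a_{[k-ki+j]_n}=i\cdot j$ is immediate from the affine shape of the formula. Second, idempotency is built in: $i\cdot i=a_{\phi(i)}=i$ by construction. Third, left cancellativity follows because $a=\phi^{-1}$ is a permutation of $Q$, so the first row contains $n$ distinct entries; hence $1$ is left cancellable, and Lemma~\ref{L-lc} upgrades this to the full cancellation property on $Q$.

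The only real obstacle is recognising that $k-1$ must be invertible modulo $n$ for the inverse map $\phi^{-1}$ to exist; once that is in place the construction is essentially forced on us, and every remaining verification reduces to a one-line affine computation in $\mathbb{Z}_n$.
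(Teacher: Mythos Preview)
Your approach coincides with the paper's: both derive the constraint $a_{[k-(k-1)i]_n}=i$ from Lemma~\ref{L-basic}, use it to pin down the first row, and then invoke Lemma~\ref{L-lc} for left cancellativity. The difference is that you are more careful. You correctly note that the map $\phi(i)=[k-(k-1)i]_n$ must be a bijection for any idempotent $k$-translatable groupoid to exist, which forces $\gcd(k-1,n)=1$; the paper's proof simply lists the values $1,\ldots,n$ as sitting in the positions $[k-(k-1)i]_n$ without checking that those positions are distinct. Your counterexample $n=4$, $k=3$ is genuine: the diagonal of any $3$-translatable Cayley table of order $4$ reads $a_1,a_3,a_1,a_3$, which can never equal $1,2,3,4$. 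So the lemma as printed is false without the extra hypothesis $\gcd(k-1,n)=1$; under that hypothesis your argument and the paper's are the same construction, and yours is the more explicit of the two.
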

\begin{proof}
First observe that an idempotent groupoid cannot be $k$-translatable for $k=1$. Let $k>1$. Then, by Lemma \ref{L-basic}, in an idempotent $k$-translatable groupoid $Q$, for all $i\in Q$ we have $i=a_{[k-ki+i]_n}$. Since $Q$ is idempotent, the first row of the multiplication table of this groupoid contains the distinct elements $a_1,a_{[2-k]_n},a_{[3-2k]_n},\ldots,a_{[n+k(1-n)]_n}$, with $i$ appearing in the $[k-ki+i]_n$-column. So, by Lemma \ref{L-lc}, this groupoid is left cancellative. By $k$-translatability,  other rows are uniquely determined by the first. 
\end{proof}

An idempotent $k$-translatable groupoid may not be right cancellative. It is not difficult to see that 
if the first row of the multiplication table of the $4$-translatable  groupoid $Q$ of order $8$ has form $1,6,3,8,5,2,7,4$, then this groupoid is idempotent and left cancellative but it is not right cancellative.


For every odd $n$ and every $k>1$ such that $(k,n)=1$ there is at least one idempotent $k$-translatable quasigroup. 
For even $n$ there are no such quasigroups \cite[Theorem 8.9]{DM2}.

\begin{lemma}
An idempotent $k$-translatable groupoid that is alterable, strongly elastic or bookend is cancellative, i.e., it is a quasigroup.
\end{lemma}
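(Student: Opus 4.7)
The plan is to reduce the statement to showing right cancellativity, since Lemma \ref{L-idtrans} already supplies left cancellativity, and on a finite groupoid, two-sided cancellativity (equivalently, bijectivity of all left and right translations) is precisely the quasigroup property. So the heart of the proof is to show, under each of the three separate hypotheses, that $i\cdot s=j\cdot s$ forces $i=j$. In every case my strategy is the same: use the extra identity to rewrite the equality $i\cdot s=j\cdot s$ as an equality between two left products with a common left factor, and then invoke the already-available left cancellativity.

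\textbf{Alterable case.} I would apply alterability $(a\cdot b=c\cdot d\ \Rightarrow\ b\cdot c=d\cdot a)$ directly to $i\cdot s=j\cdot s$ with $(a,b,c,d)=(i,s,j,s)$. This gives $s\cdot j=s\cdot i$, and left cancellativity closes the argument.

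\textbf{Strongly elastic case.} Let $p=i\cdot s=j\cdot s$. Using the chain of equalities $s\cdot(x\cdot s)=(s\cdot x)\cdot s=(x\cdot s)\cdot x$ coming from strong elasticity applied with the pair $(s,x)$, I would specialize once to $x=i$ and once to $x=j$. This yields $s\cdot p=p\cdot i$ and $s\cdot p=p\cdot j$, whence $p\cdot i=p\cdot j$, and left cancellativity finishes.

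\textbf{Bookend case.} Again set $p=i\cdot s=j\cdot s$. Bookend applied to the pair $(s,i)$ gives $(i\cdot s)\cdot(s\cdot i)=s$, i.e.\ $p\cdot(s\cdot i)=s$, and applied to $(s,j)$ gives $p\cdot(s\cdot j)=s$. Thus $p\cdot(s\cdot i)=p\cdot(s\cdot j)$; left cancelling by $p$ yields $s\cdot i=s\cdot j$, and left cancelling once more by $s$ gives $i=j$.

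Having shown cancellativity, I would conclude by the standard finiteness argument: on the finite set $Q$, left cancellativity makes each right translation $x\mapsto a\cdot x$ an injection hence a bijection (so $Q$ is right solvable), and right cancellativity makes each left translation $x\mapsto x\cdot a$ a bijection (so $Q$ is left solvable); thus $Q$ is a quasigroup. I do not anticipate any real obstacle: once one notices that all three identities are tailored to convert a right-cancellation problem into a left-cancellation problem by a single symmetric substitution, the verifications are each only a few lines. The only mild subtlety is keeping the role of the free variable in each identity straight so that the common factor does land on the left.
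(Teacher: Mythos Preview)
Your proof is correct, and it takes a genuinely different route from the paper's. The paper works arithmetically with the $k$-translatable structure: given $i\cdot j=s\cdot j$ it writes $s=[i+t]_n$, deduces $[kt]_n=0$ from the explicit multiplication formula, then uses idempotency and the formula again to obtain $t=n\cdot t$; only at that point does each of the three hypotheses enter, to force $t=n$. Your argument, by contrast, never touches the arithmetic of $k$-translatability at all beyond invoking Lemma~\ref{L-idtrans} for left cancellativity: you apply each identity (alterable, strongly elastic, bookend) directly to the equation $i\cdot s=j\cdot s$ to manufacture a left-cancellable equality. This is cleaner and shorter, and in fact it shows something slightly stronger than the paper's argument does---idempotency is used \emph{only} to secure left cancellativity, so your reasoning would go through for any finite left-cancellative groupoid satisfying one of the three identities. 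The paper's approach has the minor virtue of illustrating how the index calculus works, but yours isolates the algebraic content of the lemma more transparently.
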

\begin{proof}
By Lemma \ref{L-idtrans} such groupoid is left cancellative. Let $i\cdot j=s\cdot j$. Then obviously $s=[i+t]_n$ for some $t\in\{1,2,\ldots,n\}$. Since $a_{[k-ki+j]_n}=i\cdot j=s\cdot j=a_{[k-ks+j]_n}$, $[k-ki+j]_n=[k-ks+j]_n=[k-ki-kt+j]_n$ and so $[kt]_n=0$. But then, since $Q$ is idempotent, $t\cdot t=t=a_{[k-kt+t]_n}=a_{[k+t]_n}=1\cdot[k+t]_n=n\cdot t$. If $Q$ is alterable, by definition, $i\cdot j= w\cdot z$ implies $j\cdot w=z\cdot i$. Therefore, $t\cdot n=t\cdot t=t$. If $Q$ is strongly elastic then, by definition, $(i\cdot j)\cdot i=(j\cdot i)\cdot j$. Therefore $t=n\cdot t=t\cdot t=t\cdot (n\cdot t)=(n\cdot t)\cdot n=t\cdot n$. In either case, left cancellation implies $t=n$. If $Q$ is bookend then, by definition, $(i\cdot j)\cdot (j\cdot i)=j$. So, $t=t\cdot t=n\cdot t=(n\cdot t)\cdot (t\cdot n)=t\cdot (t\cdot n)$ and left cancellation implies $t=n$. Hence, if $Q$ is alterable, strongly elastic or bookend, $t=n$ and so $s=[i+t]_n=[i+n]_n=i$. Therefore, $Q$ is also right cancellative, and consequently, it is a quasigroup.  
\end{proof}

\begin{theorem}\label{izo}
Idempotent $k$-translatable groupoids of the same order are isomorphic.
\end{theorem}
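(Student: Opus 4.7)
My plan is to argue that any two such groupoids in fact have identical Cayley tables, so they are isomorphic via the identity map. Let $Q_1$ and $Q_2$ be idempotent $k$-translatable groupoids on the naturally ordered set $\{1,2,\ldots,n\}$, with first rows $a^{(1)}_1,\ldots,a^{(1)}_n$ and $a^{(2)}_1,\ldots,a^{(2)}_n$ respectively. By Lemma~\ref{L-idtrans} both groupoids are left cancellative, so each first row is a permutation of $\{1,2,\ldots,n\}$.

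Next, I would invoke Lemma~\ref{L-basic}(i), equivalently Lemma~\ref{L-cond}(ii): since $i\cdot i=i$ in both groupoids and the formula $i\cdot i=a^{(\ell)}_{[k-ki+i]_n}$ holds for each $\ell\in\{1,2\}$, one obtains $a^{(\ell)}_{[k+i(1-k)]_n}=i$ for every $i\in Q$. Setting $g(i):=[k+i(1-k)]_n$, the key observation is that $g$ must be a bijection on $Q$: if $g(i)=g(j)$ with $i\ne j$, then the first row would hold two distinct values $i$ and $j$ at a single position, contradicting that it is a permutation. Hence $g$ admits an inverse and $a^{(\ell)}_p=g^{-1}(p)$ for every position $p$ and each $\ell\in\{1,2\}$. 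In particular $a^{(1)}_p=a^{(2)}_p$ for all $p$, so the two first rows coincide.

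Finally, because both groupoids are $k$-translatable, every subsequent row of each Cayley table is obtained from its predecessor by the same cyclic shift of length $k$, so identical first rows force identical full Cayley tables; thus $Q_1=Q_2$ and the identity map is the required isomorphism. The main ``obstacle'' is really a conceptual one: recognising that idempotency together with the left cancellativity supplied by Lemma~\ref{L-idtrans} completely rigidifies the first row. Once that rigidity is in hand, $k$-translatability propagates it to the whole table with no further calculation.
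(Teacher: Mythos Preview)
Your proof is correct and takes a somewhat different route from the paper's. The paper allows the two groupoids to carry distinct underlying sets $Q=\{1,\ldots,n\}$ and $S=\{c_1,\ldots,c_n\}$, builds the bijection $\varphi(i)=c_i$, and then verifies the homomorphism condition by using left cancellativity (Lemma~\ref{L-cond}(i)) to pass from $t=a_{[k-ki+j]_n}=a_{[k-kt+t]_n}$ to $[k-ki+j]_n=[k-kt+t]_n$, which transports to the $b$-sequence of $S$. You instead exploit the paper's standing convention that every groupoid is already on the naturally ordered set $\{1,\ldots,n\}$, and observe that idempotency forces $a_{g(i)}=i$ with $g(i)=[k+i(1-k)]_n$; since $g$ is then forced to be a bijection, the first row is the fixed permutation $a_p=g^{-1}(p)$ independent of the groupoid, and $k$-translatability propagates this to the full table. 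Your argument is more elementary and in fact proves the sharper statement that there is a \emph{unique} idempotent $k$-translatable groupoid on the naturally ordered $\{1,\ldots,n\}$; the paper's approach, by contrast, would apply verbatim even without first normalising the underlying sets.

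One small remark: the injectivity of $g$ follows immediately from $a_{g(i)}=i$ (a function cannot take two values at one point), so you do not actually need the permutation property of the first row---and hence not Lemma~\ref{L-idtrans}---at that step. Left cancellativity is, strictly speaking, unnecessary for your argument, though invoking it does no harm.
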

\begin{proof}
Suppose that $(Q,\cdot)$ and $(S,*)$ are idempotent $k$-translatable groupoids of order $n$. Then $Q=\{1,2,\ldots,n\}$ has a $k$-translatable sequence $a_1,a_2,\ldots,a_n$ with respect to the ordering $1,2,\ldots,n$, and $S=\{c_1,c_2,\ldots,c_n\}$ has a $k$-translatable sequence $b_1,b_2,\ldots,b_n$ with respect to the ordering $c_1,c_2,\ldots,c_n$. Then, by Lemma \ref{L-basic} $(i)$, we have $i\cdot j=a_{[k-ki+j]_n}$ and $c_i*c_j=b_{[k-ki+j]_n}$. Since both groupoids are idempotent, $i=a_{[k-ki+i]_n}$ and $c_i=b_{[k-ki+i]_n}$ for all elements of $Q$ and $S$.

Define a mapping $\varphi\colon Q\longrightarrow S$ as follows: $\varphi(i)=c_i$ (for all $i\in Q$). Then $\varphi$ is clearly a bijection. For $i,j\in Q$, we have $t=i\cdot j=a_{[k-ki+j]_n}=a_{[k-kt+t]_n}$, and since $Q$ is left cancellative (Lemma \ref{L-idtrans}), by Lemma \ref{L-cond}, we obtain $[k-ki+j]_n=[k-kt+t]_n$. Thus $\varphi(i\cdot j)=c_{i\cdot j}=c_t=b_{[k-kt+t]_n}=b_{[k-ki+j]_n}=c_i *c_j=\varphi(i)*\varphi(j)$ and so $\varphi$ is an isomorphism. 
\end{proof}

\begin{proposition}\label{P2}
An idempotent, $k$-translatable groupoid $Q$ of order $n$ is right distributive if and only if it is left distributive.
\end{proposition}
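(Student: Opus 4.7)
My plan is to invoke Lemma~\ref{L-idtrans} to secure left cancellativity of $Q$, so that all items of Lemma~\ref{L-cond} apply, and then reduce both characterizations Lemma~\ref{L-cond}$(vii)$ and Lemma~\ref{L-cond}$(viii)$ to linear congruences over $\mathbb{Z}_n$ that are visibly permutations of one another.

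The substitution rule I will use is extracted from the idempotency criterion Lemma~\ref{L-cond}$(iii)$, which rewrites as $k(u\cdot v)\equiv (u\cdot v)-v+ku\pmod n$ for every $u,v\in Q$. Applying this to Lemma~\ref{L-cond}$(vii)$, the $ks$ term cancels on both sides and left distributivity becomes equivalent to
\begin{equation*}
(\mathrm{A})\qquad (i\cdot j)+(s\cdot i)\equiv (s\cdot j)+i\pmod n\quad\text{for all } i,j,s\in Q.
\end{equation*}
Applied instead to Lemma~\ref{L-cond}$(viii)$, the $ki$ terms cancel and right distributivity becomes equivalent to
\begin{equation*}
(\mathrm{B})\qquad (i\cdot s)+j\equiv (j\cdot s)+(i\cdot j)\pmod n\quad\text{for all } i,j,s\in Q.
\end{equation*}
These reductions are reversible, since the same substitution rule, applied in the opposite direction, reintroduces the cancelled terms; so we genuinely have left distributivity $\Leftrightarrow(\mathrm{A})$ and right distributivity $\Leftrightarrow(\mathrm{B})$.

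The punchline is that $(\mathrm{A})$ and $(\mathrm{B})$ are the same statement up to relabelling of universally quantified variables. Substituting $(i,j,s)\mapsto(j,s,i)$ in $(\mathrm{A})$ yields exactly $(\mathrm{B})$, and $(i,j,s)\mapsto(s,i,j)$ in $(\mathrm{B})$ yields $(\mathrm{A})$. Hence $(\mathrm{A})\Leftrightarrow(\mathrm{B})$, and consequently $Q$ is left distributive if and only if it is right distributive.

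The only delicate step is the passage from Lemma~\ref{L-cond}$(vii)$,$(viii)$ to $(\mathrm{A})$,$(\mathrm{B})$: the modulo $n$ arithmetic must be tracked carefully and one must keep straight which product term is being replaced on which side. Once that routine substitution is done the argument collapses to a one-line relabelling, so the conceptual obstacle is essentially nil; the work is in noticing that idempotency turns the mixed linear-plus-$k$-multiplied identities $(vii)$ and $(viii)$ into purely linear identities that are cyclically symmetric.
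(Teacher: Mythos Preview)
Your argument is correct. Both your proof and the paper's hinge on the same identity extracted from Lemma~\ref{L-cond}$(iii)$, namely $k(u\cdot v)\equiv (u\cdot v)-v+ku\pmod n$, used to linearize the distributivity conditions. The paper, however, organizes the computation differently: it names $t=i\cdot j$, $u=i\cdot s$, $w=j\cdot s$, writes down the congruence coming from right distributivity (your $(viii)$) together with the three congruences obtained by applying $(iii)$ to $t,u,w$ individually, and then combines two of them to produce $i\cdot w=t\cdot u$, i.e.\ left distributivity; the converse is declared ``analogous.'' Your route---reducing $(vii)$ and $(viii)$ to the purely additive identities $(\mathrm{A})$ and $(\mathrm{B})$ and observing that these coincide under the cyclic relabelling $(i,j,s)\mapsto(j,s,i)$---is tidier: it delivers both implications at once and makes the left/right symmetry explicit, whereas the paper's version leaves the reader to redo the combination for the other direction.
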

\begin{proof}
Let $i\cdot j=t$, $i\cdot s=u$, $j\cdot s=w$. The right distributivity means that $t\cdot s=u\cdot w$. Then, by Lemmas \ref{L-idtrans} and \ref{L-cond}, \ $[s+ku]_n=[w+kt]_n$, \ $[t+ki]_n=[j+kt]_n$, \ $[u+ki]_n=[s+ku]_n$ \ and \ $[s+kw]_n=[w+kj]_n$. So, $[s+ku]_n=[w+kt]_n=[u+ki]_n$, and consequently, $[-ki+w]_n=[-kt+u]_n$. Thus $i\cdot w=t\cdot u$. Hence, $Q$ is left distributive.

The converse statement can be proved analogously.
\end{proof}

\begin{theorem}\label{T-quasi}
An idempotent $k$-translatable groupoid of order $n$ with $(k,n)=1$ is a quasigroup.
\end{theorem}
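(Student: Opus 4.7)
The plan is to show that $Q$ is both left and right cancellative; since $Q$ is finite this forces the left and right translations to be bijections, i.e., $Q$ is a quasigroup.

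Left cancellativity is already handed to us by Lemma \ref{L-idtrans}, so the only work is to establish right cancellativity using the hypothesis $(k,n)=1$. The natural route is through Lemma \ref{L-basic}$(i)$: suppose $i\cdot j = s\cdot j$. Then $a_{[k-ki+j]_n} = a_{[k-ks+j]_n}$, and because $Q$ is left cancellative, Lemma \ref{L-cond}$(i)$ yields the equality of indices $[k-ki+j]_n = [k-ks+j]_n$, hence $[k(i-s)]_n = 0$. Now the coprimality hypothesis $(k,n)=1$ means $k$ is invertible modulo $n$, so $[i-s]_n = 0$, giving $i=s$. This is the key step, and it is genuinely the place where the condition $(k,n)=1$ is used; without it one could only conclude $i \equiv s$ modulo $n/(k,n)$.

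Having both cancellation laws, finiteness does the rest: for each $i\in Q$ the left translation $x\mapsto i\cdot x$ is an injection from the finite set $Q$ to itself and therefore a bijection, so right solvability holds, and symmetrically right cancellativity gives left solvability. Thus $Q$ is a quasigroup.

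I do not expect any serious obstacle here; the only subtlety is to make sure the chain of implications uses only facts already available, namely Lemma \ref{L-basic}$(i)$, Lemma \ref{L-idtrans} (for left cancellativity and for the conclusion that $a_i=a_j \iff i=j$ via Lemma \ref{L-cond}$(i)$), and elementary modular arithmetic. Note that Proposition \ref{P3} is not needed for this argument, although it provides an alternative route if one can verify alterability, which is less direct.
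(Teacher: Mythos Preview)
Your proof is correct and follows essentially the same route as the paper: both use Lemma~\ref{L-idtrans} for left cancellativity, then show right cancellativity by writing $i\cdot j=a_{[k-ki+j]_n}$ via Lemma~\ref{L-basic}$(i)$, invoking Lemma~\ref{L-cond}$(i)$ to turn equality of entries into equality of indices, and using $(k,n)=1$ to cancel $k$ modulo $n$. The paper phrases this as ``all entries in the $j$-th column are distinct'' and leaves the passage from two-sided cancellativity to quasigroup implicit, whereas you spell out the finiteness argument; otherwise the arguments coincide.
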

\begin{proof}
By Lemma \ref{L-idtrans} such groupoid is left cancellative. To prove that it is right cancellative consider an arbitrary column of its multiplication table. Suppose that it is $j$-th column. This column has form $a_j,a_{[j-k]_n},a_{[j-2k]_n},\ldots,a_{[j-(n-1)k]_n}$. All these elements are different. In fact, if $a_{[j-sk]_n}=a_{[j-tk]_n}$ for some $s,t\in Q$, then $(s-t)k\equiv 0({\rm mod}\,n)$, which is possible only for $s=t$ because $(k,n)=1$. So, in this groupoid $s\cdot j=t\cdot j$ implies $s=t$ for all $j,s,t\in Q$. Hence $Q$ is a quasigroup.
\end{proof}

\begin{lemma}
If a $k$-translatable groupoid of order $n$ has a right cancellable element, then $(k,n)=1$.
\end{lemma}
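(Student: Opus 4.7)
The plan is to read off the $j$-th column of the Cayley table using Lemma \ref{L-basic}(i) and exploit the fact that right cancellability of $j$ forces the $n$ entries of that column to be distinct. By Lemma \ref{L-basic}(i) the $j$-th column is
\[
1\cdot j,\,2\cdot j,\,\ldots,\,n\cdot j \;=\; a_{[k-k+j]_n},\,a_{[k-2k+j]_n},\,\ldots,\,a_{[k-nk+j]_n}.
\]
So the first task is to locate two different row-indices $i_1\ne i_2$ whose column-$j$ indices $[k-ki_1+j]_n$ and $[k-ki_2+j]_n$ coincide whenever $(k,n)>1$; this will immediately produce a repeated entry and hence a contradiction.

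Concretely, I would suppose for contradiction that $d=(k,n)>1$ and set $m=n/d$. Because $1\leqslant k<n$ forces $d\leqslant k<n$, the integer $m=n/d$ satisfies $m\geqslant 2$, so that $m+1\in\{3,\ldots,n\}$ is a legitimate row-index different from $1$. By construction $km\equiv 0\pmod n$, hence
\[
[k-k\cdot 1+j]_n \;=\; [j]_n \;=\; [j-km]_n \;=\; [k-k(m+1)+j]_n,
\]
which gives $1\cdot j=a_j=(m+1)\cdot j$, contradicting the right cancellability of $j$. Therefore $(k,n)=1$.

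There is really no hard step here; the whole argument is a one-line consequence of the explicit formula in Lemma \ref{L-basic}(i) together with the observation that multiples of $k$ modulo $n$ form a proper subgroup of $\mathbb{Z}_n$ precisely when $(k,n)>1$. The only thing that deserves a moment's care is verifying that the witness index $m+1$ is genuinely different from $1$ and lies in $\{1,2,\ldots,n\}$, which is exactly where the hypothesis $k<n$ (guaranteeing $d<n$ and hence $m\geqslant 2$) is used.
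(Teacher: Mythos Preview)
Your proof is correct and follows essentially the same approach as the paper: both arguments use Lemma~\ref{L-basic}(i) to write the entries of the column of the right cancellable element as $a_{[k-ki+j]_n}$ and then observe that if $(k,n)>1$ there exist distinct $i_1,i_2$ with $k(i_1-i_2)\equiv 0\pmod n$, forcing a repeated column entry and contradicting right cancellability. The paper phrases this as ``$k(i-j)\equiv 0\pmod n$ gives $i=j$ only when $(k,n)=1$'', whereas you explicitly exhibit the witness pair $(1,m+1)$ with $m=n/d$; this is a cosmetic difference, not a substantive one.
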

\begin{proof}
Let $t\in Q$ be a right cancellable element of a $k$-translatable groupoid $Q$. Then the mapping $R_t(x)=x\cdot t$ is one-to-one. So, it is a bijection. Hence all elements of the $t$-column are different. Thus $a_{[k-ki+t]_n}=i\cdot t=j\cdot t=a_{[k-kj+t]_n}$ is equivalent to $k(i-j)\equiv 0({\rm mod}\,n)$. The last, for all $i,j\in Q$, gives $i=j$ only in the case when $(k,n)=1$.
\end{proof}

\begin{proposition}\label{P1}
An idempotent, $k$-translatable groupoid $Q$ of order $n$ is elastic if and only if $[(i\cdot j)+(j\cdot i)]_n=[i+j]_n$ holds for all $i,j\in Q$.
\end{proposition}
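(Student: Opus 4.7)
The plan is to work entirely with modular arithmetic, converting the geometric condition of elasticity into a congruence via the lemmas already established for idempotent left cancellative $k$-translatable groupoids.

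First I would invoke Lemma \ref{L-idtrans} to note that $Q$ is left cancellative, which lets me apply all parts of Lemma \ref{L-cond}. In particular, Lemma \ref{L-cond}(iv) says elasticity is equivalent to
\[
[(j\cdot i)+k(i\cdot j)]_n \;=\; [i+ki]_n \qquad \text{for all } i,j\in Q.
\]
So the whole proof reduces to showing that, under idempotency, this congruence is equivalent to
\[
[(i\cdot j)+(j\cdot i)]_n \;=\; [i+j]_n.
\]

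The key technical input is Lemma \ref{L-cond}(iii): for idempotent $Q$, setting $t = i\cdot j$ yields $[j-ki]_n=[t-kt]_n$, which I would rearrange as
\[
(k-1)(i\cdot j) \;\equiv\; ki-j \pmod n,
\]
or equivalently $k(i\cdot j)\equiv (i\cdot j)+ki-j \pmod n$. This is the substitution that does all the work.

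For the forward direction, I substitute the rewrite of $k(i\cdot j)$ into the elasticity congruence and simplify: the $ki$ terms cancel and I am left with $(j\cdot i)+(i\cdot j)\equiv i+j\pmod n$. For the converse, I start from $(j\cdot i)\equiv i+j-(i\cdot j)\pmod n$, add $k(i\cdot j)$ to both sides, and again apply the idempotency identity $(k-1)(i\cdot j)\equiv ki-j\pmod n$ to collapse the right-hand side to $i+ki$, recovering the form of Lemma \ref{L-cond}(iv). There is no real obstacle here once the identity from Lemma \ref{L-cond}(iii) is read in the form $k(i\cdot j)\equiv (i\cdot j)+ki-j\pmod n$; the two implications are then each a one-line substitution.
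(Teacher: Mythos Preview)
Your proof is correct and follows essentially the same approach as the paper. Both arguments hinge on the idempotency identity $k(i\cdot j)-(i\cdot j)\equiv ki-j\pmod n$ (which the paper derives directly from $i\cdot j=(i\cdot j)\cdot(i\cdot j)$ and labels as \eqref{eid}, while you extract it from Lemma~\ref{L-cond}(iii)) and then combine it with the elasticity characterization of Lemma~\ref{L-cond}(iv); the resulting substitutions in each direction are effectively identical, with only cosmetic differences in the order of the algebra.
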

\begin{proof}
Since $Q$ is idempotent, for all $i,j\in Q$ we have $i\cdot j=(i\cdot j)\cdot (i\cdot j)$, i.e., 
\begin{equation}\label{eid}
[-k(i\cdot j)+(i\cdot j)]_n=[-ki+j]_n,
\end{equation}
which together with elasticity and Lemma \ref{L-cond}$(iv)$, gives $[j+k(i\cdot j)-(i\cdot j)]_n=[ki]_n=[(j\cdot i)+k(i\cdot j))-i]_n$. This implies $[(i\cdot j)+(j\cdot i)]_n=[i+j]_n$. 

Conversely, if $[(i\cdot j)+(j\cdot i)]_n=[i+j]_n$, then also $[k(i\cdot j)+k(j\cdot i)]_n=[ki+kj]_n$, and consequently 
$$[i+ki]_n=[i+k(i\cdot j)+k(j\cdot i)-kj]_n=[(k(j\cdot i)-kj+i)+k(i\cdot j)]_n\stackrel{\eqref{eid}}{=}[(j\cdot i)+k(i\cdot j)]_n.
$$
This gives the elasticity of $Q$.
\end{proof}

\begin{theorem}\label{T2}
An alterable, left or right cancellative groupoid of order $n$ may be $k$-translatable only for $[k^2]_n=n-1$.
\end{theorem}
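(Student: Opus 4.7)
The plan is to translate alterability into an arithmetic implication on indices modulo $n$ via Lemma \ref{L-basic}(i), then evaluate that implication on a cleverly chosen family of quadruples to force $1+k^2\equiv 0\pmod n$.

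First I would handle both cancellativity hypotheses uniformly by showing that the first row $a_1,a_2,\ldots,a_n$ consists of pairwise distinct entries. In the left cancellative case this is immediate, since the first row is $1\cdot 1,\,1\cdot 2,\ldots,1\cdot n$. In the right cancellative case, the lemma just preceding this theorem gives $(k,n)=1$; then the indices $[k-kp+1]_n$, as $p$ ranges over $\{1,\ldots,n\}$, exhaust all residues modulo $n$, and since the first column has distinct entries by right cancellativity, the values $a_1,\ldots,a_n$ must also be pairwise distinct.

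With $a_1,\ldots,a_n$ distinct and $i\cdot j=a_{[k-ki+j]_n}$ by Lemma \ref{L-basic}(i), the alterability implication $i\cdot j=w\cdot z\Rightarrow j\cdot w=z\cdot i$ is equivalent to
\[
[j-ki]_n=[z-kw]_n \ \Rightarrow \ [w-kj]_n=[i-kz]_n.
\]
The key substitution is to take $z=i$ and $j=[(1+k)i-kw]_n$, which makes the hypothesis hold automatically. A short modular calculation then reduces the required conclusion to $(1+k^2)(w-i)\equiv 0\pmod n$, valid for all $i,w\in Q$; choosing $i=1$, $w=2$ yields $1+k^2\equiv 0\pmod n$, i.e., $[k^2]_n=n-1$.

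The only real obstacle is the reduction to distinctness of $a_1,\ldots,a_n$ in the right cancellative case, since Lemma \ref{L-cond}(x) is stated only for left cancellative groupoids; once distinctness is secured through the preceding lemma on right cancellable elements, the rest is a routine substitution in modular arithmetic.
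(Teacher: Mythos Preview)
Your argument is correct and takes a genuinely different route from the paper's proof. The paper first remarks that in an alterable groupoid left and right cancellativity are equivalent (if $i\cdot j=s\cdot j$, alterability gives $j\cdot s=j\cdot i$, whence $s=i$), and then works entirely in the left cancellative setting. It proceeds by a hands-on chain of specific products: using the shift identity $[i+1]_n\cdot s=i\cdot[s-k]_n$ it computes $(n-1)\cdot n=n\cdot k$, applies alterability to get $n\cdot n=k\cdot(n-1)$, rewrites $n\cdot n$ as $[n+1]_n\cdot k$, applies alterability once more to obtain $k\cdot k=(n-1)\cdot[n+1]_n$, and then uses translatability again to rewrite this as $k\cdot[k+1+k^2]_n$, so left cancellation yields $[1+k^2]_n=0$.

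Your approach instead converts alterability wholesale into the index implication $[j-ki]_n=[z-kw]_n\Rightarrow[w-kj]_n=[i-kz]_n$ (this is exactly the manoeuvre used in the paper only later, in the converse direction of Corollary~\ref{C-alter}), and then reads off the condition by a single parametrised substitution. This is cleaner and makes transparent why $1+k^2$ is the obstruction. Your treatment of the right cancellative case via $(k,n)=1$ and distinctness of the first column is also a legitimate alternative to the paper's one-line reduction to left cancellativity; note, however, that the relevant result on right cancellable elements is the unnumbered lemma two statements before Theorem~\ref{T2}, not the one immediately preceding it. The paper's reduction is shorter, but yours has the advantage of not invoking an unproved remark about equivalence of the two cancellativities.
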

\begin{proof}
Since in alterable groupoids left and right cancellativity are equivalent, we will consider only an alterable groupoid with left cancellativity. 

Let $Q$ be an alterable groupoid which is left cancellative and $k$-translatable. Then, by Lemma \ref{L-basic}$(iii)$, for all $i,s\in\{1,2,\ldots,n\}$ we have 
\begin{equation}\label{e11}
\rule{20mm}{0mm}[i+1]_n\cdot s=i\cdot [n+s-k]_n=i\cdot [s-k]_n
\end{equation}
for all $i,s\in\{1,2,\ldots,n\}$.

Thus, 
$$
(n-1)\cdot n=(n-1)\cdot [n+k-k]_n\stackrel{\eqref{e11}}{=}n\cdot [n+k]_n=n\cdot k,
$$
which, by alterability, gives
\begin{equation}\label{e12}
\rule{40mm}{0mm}n\cdot n=k\cdot(n-1).
\end{equation}
But $n\cdot n=n\cdot [n+k-k]_n\stackrel{\eqref{e11}}{=}[n+1]_n\cdot [n+k]_n=[n+1]_n\cdot k$, so
$$
[n+1]_n\cdot k=k\cdot (n-1),
$$
which, by alterability, implies
$k\cdot k=[n-1]_n\cdot [n+1]_n$. Hence
$$
k\cdot k=(n-1)\cdot [n+1]_n\stackrel{\eqref{e11}}{=}n\cdot [n+1+k]_n=n\cdot (k+1)\stackrel{\eqref{e11}}{=}k\cdot [k+1+k^2]_n.
$$
Since $Q$ is left cancellable, we obtain $0=[1+k^2]_n$. Therefore $k^2\equiv (-1)({\rm mod}\,n)$.
\end{proof}

\begin{corollary}\label{C-alter}
A $k$-translatable and left cancellative groupoid of order $n$ is alterable if and only if  $[k^2]_n=n-1$. 
\end{corollary}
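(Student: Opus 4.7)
The plan is to separate the biconditional into two directions. The forward direction ($\Rightarrow$) is already done: Theorem \ref{T2} shows that any alterable, left cancellative, $k$-translatable groupoid of order $n$ must satisfy $[k^2]_n = n-1$, so there is nothing more to prove there.

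For the reverse direction ($\Leftarrow$), the natural tool is Lemma \ref{L-cond}$(x)$, which says that, in our setting, alterability is equivalent to the arithmetic implication
\[
[j+kw]_n = [z+ki]_n \ \Longrightarrow \ [w+kz]_n = [i+kj]_n
\]
for all $i,j,w,z \in Q$. So I would just assume the hypothesis $[j+kw]_n = [z+ki]_n$ and multiply both sides by $k$ modulo $n$ to obtain $[kj + k^2 w]_n = [kz + k^2 i]_n$. Invoking $[k^2]_n = n-1 \equiv -1 \pmod{n}$ turns this into $[kj - w]_n = [kz - i]_n$, which rearranges to $[i+kj]_n = [w+kz]_n$, exactly the required conclusion.

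There is no real obstacle: the entire content of the converse is the identity $k^2 \equiv -1 \pmod n$ applied to a single linear congruence, together with the arithmetic reformulation of alterability supplied by Lemma \ref{L-cond}$(x)$. The only thing worth double-checking is that the reduction modulo $n$ is performed consistently (in particular that the convention $0 = n$ from the opening of Section 2 causes no issue in the rearrangement), but this is routine.
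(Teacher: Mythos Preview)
Your proposal is correct and follows essentially the same route as the paper: the forward direction is Theorem~\ref{T2}, and for the converse you multiply the relevant congruence by $k$ and invoke $k^2\equiv -1\pmod n$. The only cosmetic difference is that you quote the arithmetic reformulation of alterability from Lemma~\ref{L-cond}$(x)$, whereas the paper rederives it on the spot from Lemma~\ref{L-basic}$(i)$ and left cancellativity; the underlying computation is identical.
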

\begin{proof} Let a left cancellative groupoid $Q$ of order $n$ be $k$-translatable. Assume that the first row of the multiplication table of this groupoid has form $a_1,a_2,\ldots,a_n$. Then all these elements are different. 

If $k^2\equiv -1({\rm mod}\,n)$, then $i\cdot j=g\cdot h$, by Lemma \ref{L-basic}$(i)$, implies $[k(g-i)]_n=[h-j]_n$. Multiplying both sides of this equation by $k$ gives $[k^2(g-i)]_n=[k(h-j)]_n$, i.e., $[i-g]_n=[k(h-j)]_n$. This implies $j\cdot g=h\cdot i$. Therefore $Q$ is alterable.
	
	The converse statement is a consequence of Theorem \ref{T2}.
\end{proof}

\section*{\centerline{3. Left unitary translatable groupoids}}\setcounter{section}{3}\setcounter{theorem}{0}

By a {\it $($left$)$ unitary groupoid} of order $n$ we mean a groupoid $(Q,\cdot)$, where the set $Q=\{1,2,\ldots,n\}$ is naturally ordered and $1$ is its (left) neutral element.

The first row of a multiplication table of such groupoid has the form $1,2,\ldots,n$. So, if it is $k$-translatable, then, by Lemma \ref{L-basic}, $i\cdot j=[k-ki+j]_n$ for all $i,j\in Q$. Moreover, by Lemma \ref{L-lc}, a $k$-translatable left unitary groupoid is left cancellative. If $n$ is prime, then it is also right cancellative.

\begin{theorem}\label{T-izo} 
Left unitary $k$-translatable groupoids of the same order are isomorphic.
\end{theorem}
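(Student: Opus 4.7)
The plan is to use the explicit multiplication formula that the hypotheses force on any left unitary $k$-translatable groupoid, and then simply read off an isomorphism from the indexing.

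First, I would fix two left unitary $k$-translatable groupoids $(Q,\cdot)$ and $(S,*)$ of order $n$ with natural orderings $1,2,\ldots,n$ and $c_1,c_2,\ldots,c_n$ respectively, where $1$ and $c_1$ are the left neutral elements. Because $1$ is left neutral, the first row of the multiplication table of $Q$ is $a_1,a_2,\ldots,a_n=1,2,\ldots,n$, and similarly the first row of the table of $S$ (indexed by $c_1,\ldots,c_n$) is $c_1,c_2,\ldots,c_n$. Applying Lemma~\ref{L-basic}$(i)$ to both, I get the explicit formulas
\begin{equation*}
i\cdot j=[k-ki+j]_n \qquad\text{and}\qquad c_i*c_j=c_{[k-ki+j]_n}
\end{equation*}
for all $i,j\in\{1,2,\ldots,n\}$, as already noted in the paragraph preceding the theorem.

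Next, I would define $\varphi\colon Q\to S$ by $\varphi(i)=c_i$. This is obviously a bijection, since $c_1,\ldots,c_n$ is just a relabelling of $1,\ldots,n$. To verify the homomorphism property I simply compute
\begin{equation*}
\varphi(i\cdot j)=\varphi([k-ki+j]_n)=c_{[k-ki+j]_n}=c_i*c_j=\varphi(i)*\varphi(j),
\end{equation*}
using the two displayed formulas. Thus $\varphi$ is an isomorphism.

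There is not really a hard step here; the content of the theorem lies entirely in the observation that, once left unitarity fixes the first row as the identity permutation and $k$-translatability propagates it, the multiplication is determined by $n$ and $k$ alone, so any two such groupoids are just relabellings of the same table. If anything, the only thing to be careful about is pointing out why the first row must equal $1,2,\ldots,n$ (because $1\cdot j=j$ for every $j$), and then invoking Lemma~\ref{L-basic}$(i)$ to get the closed form; the rest is bookkeeping.
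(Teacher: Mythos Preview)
Your proof is correct and essentially identical to the paper's own argument: both derive the closed-form product $i\cdot j=[k-ki+j]_n$ from left unitarity plus Lemma~\ref{L-basic}$(i)$, define the bijection sending the $i$-th element of one ordering to the $i$-th element of the other, and verify it preserves the product by direct substitution. The only cosmetic difference is that the paper writes the second groupoid's elements as $1',2',\ldots,n'$ rather than $c_1,c_2,\ldots,c_n$.
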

\begin{proof} Suppose that $(Q,\cdot)$ and $(G,*)$ are left unitary, $k$-translatable groupoids having $k$-translatable sequences $1,2,\ldots,n$ and $1',2',\ldots,n'$, with respect to the orderings $1,2,\ldots,n$ and $1',2',\ldots,n'$, respectively. Then $i\cdot j=[k-ki+j]_n$ and $i'*j'=[k-ki+j]'_n$. Define $\varphi\colon Q\longrightarrow G$ as $\varphi(i)=i'$. Then $\varphi$ is clearly a bijection and $\varphi(i\cdot j)=\varphi([k-ki+j]_n)=[k-ki+j]'_n=i'*j'=\varphi(i)*\varphi(j)$, so $\varphi$ is an isomorphism.
\end{proof}

\begin{theorem}\label{med}
A $k$-translatable left unitary groupoid is medial, but it is not right distributive.
\end{theorem}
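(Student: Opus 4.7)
The plan is to exploit the explicit formula $i \cdot j = [k - ki + j]_n$ recorded at the start of Section 3 for a left unitary $k$-translatable groupoid. Since the map $j \mapsto i \cdot j$ is affine in the residue modulo $n$, both halves of the theorem collapse to elementary congruence calculations that require no extra theory.

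For mediality, I would expand $(i \cdot j) \cdot (w \cdot z)$ by applying the formula once on the outside (left entry $i\cdot j$, right entry $w\cdot z$) and once on each of the two interior products. A short computation yields
\[
(i \cdot j) \cdot (w \cdot z) \;=\; [\,2k - k^2 + k^2 i - kj - kw + z\,]_n.
\]
Performing the analogous expansion of $(i \cdot w) \cdot (j \cdot z)$ produces the identical expression, because the letters $j$ and $w$ enter the final linear combination in perfectly symmetric roles. This gives mediality at once; equivalently, one could feed this expansion into Lemma \ref{L-cond}(ix), which is applicable since left unitary $k$-translatable groupoids are left cancellative.

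For the failure of right distributivity I would carry out the same type of expansion for $(i\cdot j)\cdot s$ and for $(i \cdot s)\cdot (j\cdot s)$. After cancellation of all the quadratic and linear-in-$i,j$ contributions, the difference of the two sides reduces to $[\,k(1-s)\,]_n$. If $Q$ were right distributive this quantity would have to vanish modulo $n$ for every $s \in Q$; specialising to $s = 2$ forces $k \equiv 0 \pmod n$, contradicting $1 \leqslant k < n$.

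The only mild obstacle is bookkeeping, since the nested substitutions generate several $\pm k^2$ terms whose signs must be tracked; but once the affine expansion is written out, the symmetry in $j$ and $w$ for the medial identity, and the isolated residual $k(1-s)$ for the right-distributive identity, are both immediate. No additional results beyond the explicit formula and the definitions of medial/right distributive are needed.
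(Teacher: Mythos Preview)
Your argument is correct and follows essentially the same route as the paper: both proofs substitute the explicit formula $i\cdot j=[k-ki+j]_n$ into the medial and right-distributive identities and reduce each to an elementary congruence (a tautology for mediality, and the condition $[k(s-1)]_n=0$ for all $s$---up to sign the same as your $[k(1-s)]_n$---for right distributivity). The only cosmetic difference is that the paper cancels down to $[k(k-kx)]_n=[k(k-kx)]_n$ rather than writing out the full symmetric expansion you give.
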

\begin{proof}
In a $k$-translatable left unitary groupoid $Q$ of order $n$ we have  
$(x\cdot y)\cdot(z\cdot w)=(x\cdot z)\cdot(y\cdot w)\Leftrightarrow [k-k(kx+y)+(k-kz+w)]_n=
[k-k(k-kx+z)+(k-ky+w)]_n \Leftrightarrow [k(k-kx)]_n=[k(k-kx)]_n$. The last identity is always valid. So, this groupoid is medial.

If it is right distributive, then $(x\cdot y)\cdot z=(x\cdot z)\cdot(y\cdot z)$ for all $x,y,z\in Q$, or, equivalently, $[k-k(k-kx+y)+z]_n=[k-k(k-kx+z)+(k-ky+z)]_n$. This implies $[k(z-1)]_n=0$ for all $z\in Q$. Thus $k=0$, a contradiction. Hence, $Q$ cannot be right distributive.
\end{proof}

\begin{lemma}
A unitary groupoid of order $n$ may be $k$-trans\-latable only for $k=n-1$.
\end{lemma}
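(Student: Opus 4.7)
The plan is to use Lemma \ref{L-basic}$(i)$ combined with the fact that a unitary groupoid has a two-sided identity $1$, which forces both rigid conditions on the first row and on the structure of right multiplication by $1$.

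First I would observe that since $1$ is a left neutral element, the first row of the multiplication table is simply $1, 2, \ldots, n$; that is, $a_j = j$ for each $j \in Q$. Then by Lemma \ref{L-basic}$(i)$, $k$-translatability gives the explicit formula
\[
i \cdot j = [k - ki + j]_n
\]
for all $i, j \in Q$.

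Next I would exploit the other half of unitarity, namely that $1$ is also a right neutral element, so $i \cdot 1 = i$ for every $i \in Q$. Substituting $j = 1$ into the formula above yields $[k - ki + 1]_n = i$, equivalently $(k+1)(i-1) \equiv 0 \pmod{n}$ for all $i \in Q$. Specializing to $i = 2$ gives $k + 1 \equiv 0 \pmod{n}$, and since $1 \leqslant k < n$ forces $k + 1 \in \{2, \ldots, n\}$, the only possibility is $k + 1 = n$, i.e., $k = n-1$.

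There is no real obstacle here; the argument is a direct evaluation of the translation formula at $j=1$, and the bound $1 \leqslant k < n$ in the definition of $k$-translatability immediately pins down the unique admissible value. The only thing to be careful about is recording that the first row really is $1, 2, \ldots, n$ (a consequence of $1$ being a \emph{left} neutral element) before invoking Lemma \ref{L-basic}$(i)$.
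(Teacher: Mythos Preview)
Your proof is correct and follows essentially the same route as the paper: both use that the first row is $1,2,\ldots,n$ (so $i\cdot j=[k-ki+j]_n$), impose $i\cdot 1=i$, and specialize to $i=2$ to force $k=n-1$. The paper compresses your computation to the single line $2=[1-k]_n$, but the argument is identical.
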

\begin{proof}
Indeed,  $i=i\cdot 1=[k-ki+1]_n$ for every $i\in Q$, which for $i=2$ gives $2=[1-k]_n$. This is possible only for $k=n-1$.
\end{proof}
\begin{corollary}\label{C-cyc}
Groups of order $n$ may be $k$-translatable only for $k=n-1$. Such groups are cyclic.
\end{corollary}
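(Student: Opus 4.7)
The plan is to reduce the first assertion to the preceding lemma, which requires that the identity element of the group sit at position $1$ of the ordering. Starting from a group $G$ of order $n$ that is $k$-translatable with respect to some ordering of its elements, I iterate Lemma \ref{L-3} (equivalently, invoke Corollary \ref{C-order}) to rotate the ordering through all $n$ cyclic shifts while preserving $k$-translatability. In particular I can place the identity of $G$ in the first position, and after the cosmetic renumeration that labels the elements in this new order as $1, 2, \ldots, n$, the group $G$ becomes a unitary groupoid in the sense of Section 3. The lemma immediately preceding Corollary \ref{C-cyc} then forces $k = n - 1$, proving the first sentence of the corollary.

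For cyclicity I work with this final presentation, in which $1$ is the identity and the first row of the Cayley table is $1, 2, \ldots, n$. Lemma \ref{L-basic}(i) with $k = n-1$ specialises to
$$
i \cdot j = [k - ki + j]_n = [(n-1)(1 - i) + j]_n = [i + j - 1]_n
$$
for all $i, j \in G$. The shift $\varphi \colon G \to \mathbb{Z}_n$ defined by $\varphi(i) = [i - 1]_n$ is then a bijection that turns $\cdot$ into ordinary addition modulo $n$, so $G \cong (\mathbb{Z}_n, +)$ is cyclic. (As a shortcut one could instead cite Theorem \ref{T-izo}, comparing $G$ with the standard left unitary $(n-1)$-translatable groupoid underlying $\mathbb{Z}_n$.)

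I do not expect any serious obstacle. All the structural work has already been done in Lemma \ref{L-3}, the preceding lemma, and Lemma \ref{L-basic}; the only subtle point is the justification that the identity can be moved to position $1$ without destroying $k$-translatability, and that is exactly the content of the cyclic-shift invariance provided by Lemma \ref{L-3}.
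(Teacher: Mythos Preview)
Your proof is correct and follows the same route the paper intends: the corollary is stated immediately after the lemma on unitary groupoids with no separate proof, so the paper treats it as a direct consequence. You have simply made explicit the one point the paper glosses over, namely that the group identity need not be the element labelled~$1$ in the given $k$-translatable ordering; your appeal to Lemma~\ref{L-3} to rotate the identity into first position while keeping the same~$k$ is exactly the right justification. The cyclicity computation via $i\cdot j=[i+j-1]_n$ is also fine and matches the paper's implicit reasoning (cf.\ the remark following the corollary and Corollary~\ref{C-semi}).
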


Obviously, such groups are $(n-1)$-translatable with respect to the ordering $1,2,\ldots,n$ or $n,1,2,\ldots,n-1$ (Lemma \ref{L-3}). Example \ref{Ex1} shows that in another order they may not be translatable.

\begin{theorem}\label{para}
If a $k$-translatable groupoid $Q$ of order $n$ is left unitary then $Q$ is 
\begin{enumerate}
\item[$(a)$] bookend if and only if $[k^2]_n=[2k]_n=n-1$,
\item[$(b)$] elastic if and only if $[k^2+k]_n=0$,
\item[$(c)$] left distributive if and only if $[k^2]_n=0$,
\item[$(d)$] left modular if and only if $k=n-1$,
\item[$(e)$] right modular if and only if $[k^2]_n=1$,
\item[$(f)$] paramedial if and only if $[k^2]_n=1$.
\end{enumerate}
\end{theorem}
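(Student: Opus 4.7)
The approach is dictated by the explicit multiplication formula available in every left unitary $k$-translatable groupoid: since the first row of the table must be $1,2,\ldots,n$, Lemma \ref{L-basic}$(i)$ collapses to $i\cdot j=[k-ki+j]_n$ for all $i,j\in Q$. Each of the six identities in (a)--(f) then unfolds into a linear congruence modulo $n$ in the free variables $i,j,z,w$, and the universal quantification in the definitions forces each coefficient separately.

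First I would handle the one-variable reductions. For (c) left distributivity and (b) elasticity, expand both sides using the formula (for (b) the simplification in Lemma \ref{L-cond}$(iv)$ saves a step), cancel common pieces, and obtain, respectively, congruences of the form $(i-1)k^2\equiv 0$ and $(i-1)(k^2+k)\equiv 0\,({\rm mod}\,n)$, valid for all $i$ iff $[k^2]_n=0$ and $[k^2+k]_n=0$. Next, the three two-variable identities (d), (e), (f) all reduce by the same process: after cancelling the $-kj$ and $-kw$ terms that appear symmetrically on both sides, one is left with a congruence of the shape $(k^2-1)(i-z)\equiv 0$ or $(k+1)(z-i)\equiv 0\,({\rm mod}\,n)$. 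Setting the free difference equal to $1$ gives, respectively, $[k^2]_n=1$ or $[k+1]_n=0$ (i.e.\ $k=n-1$); the reverse implication is immediate because vanishing of the coefficient makes the congruence hold uniformly.

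The one slightly more delicate part is (a) bookend. A double substitution yields $(j\cdot i)\cdot(i\cdot j)=[2k-k^2-2ki+(k^2+1)j]_n$, and forcing this to equal $i$ for all $i,j$ produces three conditions: $[k^2+1]_n=0$ from the $j$-coefficient, $[2k+1]_n=0$ from the $i$-coefficient, and $[2k-k^2]_n=0$ from the constant term. The first two give $[k^2]_n=[2k]_n=n-1$; the third is then automatic, since $2k-k^2\equiv(-1)-(-1)=0\,({\rm mod}\,n)$. Conversely, if $[k^2]_n=[2k]_n=n-1$, all three coefficients vanish modulo $n$ and $(j\cdot i)\cdot(i\cdot j)=i$ identically.

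No genuine obstacle is anticipated; the theorem is essentially a bookkeeping exercise in $\mathbb{Z}_n$, with the only real hazard being sign-tracking in the nested substitutions $i\cdot j\mapsto k-ki+j$. To control that I would insist on writing each side of every identity in the normal form ``$\alpha i+\beta j+\gamma z+\delta w+\varepsilon\,({\rm mod}\,n)$'' before comparing, which lets the coefficient conditions be read off at sight and makes the two directions of each biconditional into the same computation performed once.
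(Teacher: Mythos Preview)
Your approach is correct and coincides with the paper's: the proof there carries out (a) and (b) explicitly by substituting $i\cdot j=[k-ki+j]_n$ and reading off the resulting coefficient conditions, then simply writes ``in other cases the proof is similar.'' One point worth flagging, however: when you actually execute the plan for (d) and (e), the left-modular identity $(i\cdot j)\cdot z=(z\cdot j)\cdot i$ reduces to $(k^2-1)(i-z)\equiv 0\,({\rm mod}\,n)$, hence $[k^2]_n=1$, while the right-modular identity $i\cdot(j\cdot z)=z\cdot(j\cdot i)$ reduces to $(k+1)(z-i)\equiv 0\,({\rm mod}\,n)$, hence $k=n-1$---the reverse of the assignment in the theorem's statement. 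This is a slip in the paper (the omitted cases were never written out) rather than a defect in your method; the example $n=8$, $k=3$ confirms it, being left modular but not right modular under the paper's own definitions.
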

\begin{proof}
$(a)$: Indeed, $i=(j\cdot i)\cdot (i\cdot j)\Leftrightarrow i=[k-k[k-kj+i]_n+[k-ki+j]_n]_n\Leftrightarrow [2k-k^2+(k^2+1)j-(2k+1)i]_n=0\Leftrightarrow [k^2+1]_n=[2k+1]_n=0$.

\smallskip
$(b)$: $i\cdot(j\cdot i)=(i\cdot j)\cdot i\Leftrightarrow [k-ki+(k-kj+i)]_n=[k-k(k-ki+j)+i]_n\Leftrightarrow [(k^2+k)i]_n=[k^2+k]_n\Leftrightarrow [k^2+k]_n=0$.

\smallskip
In other cases the proof is similar.
\end{proof}

\begin{corollary}
Let $Q$ be a left unitary $k$-translatable groupoid. Then
\begin{enumerate}
\item[$(a)$] $Q$ is right modular if and only if it is paramedial,
\item[$(b)$] if $Q$ left modular, then it is right modular, elastic and paramedial, 
\item[$(c)$] $Q$ is not strongly elastic.
\end{enumerate}
\end{corollary}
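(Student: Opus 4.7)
The three parts fall out of Theorem \ref{para} with very little extra work.

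For part $(a)$, there is nothing to prove beyond pointing at Theorem \ref{para}: conditions $(e)$ and $(f)$ in that theorem characterize right modularity and paramediality by the \emph{same} congruence $[k^2]_n=1$, so the two properties are equivalent for any left unitary $k$-translatable groupoid.

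For part $(b)$, the plan is to read the data off Theorem \ref{para}$(d)$ first: left modularity of $Q$ is equivalent to $k=n-1$, i.e.\ $k\equiv -1\pmod n$. Substituting this into the remaining characterizations one at a time: $[k^2]_n=[(-1)^2]_n=1$, which by $(e)$ and $(f)$ gives right modularity and paramediality, and $[k^2+k]_n=[1+(-1)]_n=0$, which by $(b)$ of the same theorem gives elasticity.

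Part $(c)$ is the only part needing genuine computation. The plan is to exploit the fact that strong elasticity amounts to the conjunction of two identities: $i\cdot(j\cdot i)=(i\cdot j)\cdot i$ and $(i\cdot j)\cdot i=(j\cdot i)\cdot j$. The first, by Theorem \ref{para}$(b)$, forces $[k^2+k]_n=0$. For the second I would compute both sides directly from the formula $i\cdot j=[k-ki+j]_n$, obtaining
\begin{equation*}
(i\cdot j)\cdot i=[k-k^2+(k^2+1)i-kj]_n,\qquad (j\cdot i)\cdot j=[k-k^2+(k^2+1)j-ki]_n,
\end{equation*}
so that equating the two yields $[(k^2+k+1)(i-j)]_n=0$ for all $i,j\in Q$. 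Setting $i-j=1$ produces $[k^2+k+1]_n=0$. Combining with $[k^2+k]_n=0$ from elasticity gives $[1]_n=0$, which is impossible since $n\geqslant 2$. Hence no left unitary $k$-translatable groupoid can be strongly elastic.

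The only ``obstacle'' is the short modular arithmetic in part $(c)$, and even that reduces to a one-line subtraction once the two congruences are in hand; parts $(a)$ and $(b)$ are essentially bookkeeping on top of Theorem \ref{para}.
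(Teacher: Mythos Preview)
Your proof is correct and is exactly the argument the paper has in mind: the corollary is stated there without proof, as an immediate consequence of Theorem~\ref{para}, and your derivations for all three parts are the natural way to extract it. The short computation in part $(c)$ combining $[k^2+k]_n=0$ with $[k^2+k+1]_n=0$ is precisely what is needed, and your arithmetic checks out.
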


\begin{theorem}\label{embed} A $k$-translatable groupoid $Q$ of order $n$ is embeddable in a $k$-translatable groupoid $B$ of order $(t+1)n$ $(t\in\{1,2,3,\ldots\})$ such that
\begin{enumerate}
\item[$(a)$] if $Q$ is left cancellative then $B$ is left cancellative, and
\item[$(b)$] if $Q$ is left unitary then $B$ is left unitary.
\end{enumerate}
\end{theorem}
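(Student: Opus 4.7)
The plan is to construct $B$ as a $k$-translatable groupoid of order $N=(t+1)n$ by specifying its first row $b_1,\dots,b_N$ (which fully determines a $k$-translatable structure), and to embed $Q$ via the affine map $\varphi\colon Q\to B$ given by $\varphi(i)=(t+1)i-t$. This $\varphi$ is injective with image $\varphi(Q)=\{(t+1)m-t : 1\leq m\leq n\}$, consisting of $n$ elements equally spaced by $t+1$ in $\{1,\dots,N\}$; in particular $\varphi(1)=1$ and $\varphi(n)=N-t$.

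Writing the first row of $Q$ as $a_1,\dots,a_n$, so that $i\cdot j=a_{[k-ki+j]_n}$ by Lemma \ref{L-basic}$(i)$, I would define the first row of $B$ by requiring
$$b_{(t+1)m-t}=\varphi(a_m)=(t+1)a_m-t \qquad(m=1,\dots,n),$$
with the remaining $tn$ entries to be specified according to the assumed property of $Q$. To verify that $\varphi$ is a groupoid embedding (writing $\ast$ for $B$'s operation), fix $i,j\in Q$ and set $m=k-ki+j$. Then
$$k-k\varphi(i)+\varphi(j)=k-k\bigl((t+1)i-t\bigr)+(t+1)j-t=(t+1)m-t.$$
Writing $m=[m]_n+qn$ gives $(t+1)m-t=(t+1)[m]_n-t+qN$; since $(t+1)[m]_n-t\in\{1,\dots,N-t\}\subset\{1,\dots,N\}$, we conclude $[(t+1)m-t]_N=(t+1)[m]_n-t$. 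Applying the forced rule for $b$,
$$\varphi(i)\ast\varphi(j)=b_{[(t+1)m-t]_N}=b_{(t+1)[m]_n-t}=\varphi(a_{[m]_n})=\varphi(i\cdot j).$$

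For (a), assume $Q$ is left cancellative; then $a_1,\dots,a_n$ are distinct, so the forced values $\varphi(a_m)$ placed at the positions $\varphi(m)$ form exactly $\varphi(Q)$. I would fill the remaining $tn$ positions of $b$ by any bijection onto $\{1,\dots,N\}\setminus\varphi(Q)$; then $b$ is a permutation of $\{1,\dots,N\}$, so $1$ is left cancellable in $B$, and Lemma \ref{L-lc} forces $B$ to be left cancellative. For (b), assume $Q$ is left unitary; then $a_m=m$ for every $m$, and the forced rule collapses to $b_{(t+1)m-t}=(t+1)m-t$. Setting $b_i=i$ at every remaining position is consistent and gives $B$ the first row $1,2,\dots,N$, making $1$ a left neutral of $B$.

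The main obstacle is identifying the right embedding. The identity map $\varphi(i)=i$ fails immediately, because forcing $b_{[k-ki+j]_N}=a_{[k-ki+j]_n}$ pins entries of $b$ to values in $\{1,\dots,n\}$ at many positions, producing unavoidable repetitions incompatible with left cancellativity. The choice $\varphi(i)=(t+1)i-t$ is essentially the $\mathbb{Z}$-linear map that transports the affine rule $j\mapsto k-ki+j$ from $\mathbb{Z}_n$ to $\mathbb{Z}_N$, and the constant $-t$ is tuned so that $1\in\varphi(Q)$; this ensures the constraints on $b$ at $\varphi(Q)$-positions are simultaneously compatible with filling $b$ to a permutation (case (a)) and with the identity first row of $B$ (case (b)).
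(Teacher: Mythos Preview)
Your proposal is correct and follows essentially the same route as the paper: both use the embedding $i\mapsto (t+1)i-t$ (the paper phrases this as placing $i\in Q$ at position $(i-1)(t+1)+1$ in $B$'s ordering), and both verify $\varphi(i)\ast\varphi(j)=\varphi(i\cdot j)$ via the identity $k-k\varphi(i)+\varphi(j)=(t+1)(k-ki+j)-t$. The only cosmetic difference is that the paper fills the non-forced first-row positions uniformly with fresh elements $b_{ij}$ (handling the general, cancellative, and unitary cases simultaneously), whereas you fill them case-by-case; the content is the same.
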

\begin{proof} Let $Q=\{1,2,\ldots,n\}$ have the $k$-translatable sequence $a_1,a_2,\ldots,a_n$. Define $B$ as follows: $B=\{b_{ij}:1\leq i\leq n, \ 1\leq j\leq t+1\}$, where $b_{i1}=i$ for all $i=1,2,\ldots,n$ and $B$ has $(t+1)n$ distinct elements. Consider $B$ with the following ordering: $1',2',\ldots,((t+1)n)'$, where $b_{ij}=((i-1)(t+1)+j)'$. 

Let the first row $c_1,c_2,\ldots,c_{(t+1)n}$ of the multiplication table of $B$ have the form 
$$
a_1,b_{12},\ldots,b_{1(t+1)},a_2,b_{22},\ldots,b_{2(t+1)},a_3,b_{32},\ldots,b_{3(t+1)},\ldots,a_n,b_{n2},\ldots,b_{n(t+1)}.
$$
Then $b_{ij}=c_{(i-1)(t+1)+j}$ for all $j\ne 1$ and $a_i=c_{(i-1)(t+1)+1}$. 

The second row is obtained from the first by taking the last $k$ entries and placing them as the first $k$ entries of the second row and taking the first $((t+1)n-k)$ entries of the first row and placing them as the last $((t+1)n-k)$ entries of the second. In the same way we construct the $(i+1)$-th row from the $i$-th (for all $i=2,3,\ldots,(t+1)n-1$). Clearly, such defined groupoid $B$ is $k$-translatable. Also, if $Q$ is left cancellative then $B$ is left cancellative and if $Q$ is left unitary then $B$ is left unitary. 

So, we need only prove that the product $\cdot$ in $Q$ is preserved in the new product, call it $*$, in $B$. In proving this, we do so using the facts that $Q$ and $B$ are $k$-trans\-la\-table.

Since 
$$
i=b_{i1}=((i-1)(t+1)+1)'=(it+i-t)'
$$ 
for all $i\in Q$, for any $i,j\in Q$, by Lemma \ref{L-basic}, we have
$$
i*j=(it+i-t)'*(jt+j-t)'=c_x,
$$
where $x=[k-k(it+i-t)+jt+j-t]_{(t+1)n}=[k(t+1)-ki(t+1)+j(t+1)-t]_{(t+1)n}=([k-ki+j]_n-1)(t+1)+1$.

But, on the other hand, using the fact that $a_i=c_{(i-1)(t+1)+1}$,
$$
i\cdot j=a_{[k-ki+j]_n}=c_{([k-ki+j]_n-1)(t+1)+1}=c_x,
$$ 
Therefore, $i\cdot j=i*j$ for all $i,j\in Q$.    
\end{proof}

\section*{\centerline{4. Dual groupoids}}\setcounter{section}{4}\setcounter{theorem}{0}

The {\em dual groupoid} of a groupoid $(Q,\cdot)$ is defined as a groupoid $(Q,*)$ with the operation $i*j=j\cdot i$. We asume $(Q,\cdot)$ and $(Q,*)$ both have the natural ordering $1,2,\ldots,n$.

\begin{theorem}\label{dual}
If a left cancellative groupoid $Q$ of order $n$ is $k$-translatable, then its dual groupoid $(Q,*)$ is $k^*$-translatable if and only if \ $[kk^*]_n=1$. 
\end{theorem}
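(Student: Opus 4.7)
The plan is to apply Lemma~\ref{L-basic}(ii) to the dual groupoid $(Q,*)$ and translate the resulting condition back into a condition on the original product $\cdot$, then use left cancellativity of $(Q,\cdot)$ to read off the arithmetic content.

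Specifically, by Lemma~\ref{L-basic}(ii), $(Q,*)$ is $k^*$-translatable if and only if
\[
i * j \;=\; [i+1]_n * [j+k^*]_n \qquad \text{for all } i,j\in Q.
\]
Using the definition $x*y = y\cdot x$, this is equivalent to
\[
j\cdot i \;=\; [j+k^*]_n \cdot [i+1]_n \qquad \text{for all } i,j\in Q.
\]
Now I would apply Lemma~\ref{L-basic}(i) to the $k$-translatable groupoid $(Q,\cdot)$ to rewrite both sides as entries of the first row $a_1,\ldots,a_n$: the left side becomes $a_{[k-kj+i]_n}$ and the right side becomes $a_{[k-k(j+k^*)+(i+1)]_n} = a_{[k-kj-kk^*+i+1]_n}$.

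Since $Q$ is left cancellative, Lemma~\ref{L-cond}(i) tells us that the entries $a_1,\ldots,a_n$ are pairwise distinct, so we may compare subscripts modulo $n$. The identity above thus collapses to
\[
[k-kj+i]_n \;=\; [k-kj-kk^*+i+1]_n \qquad \text{for all } i,j\in Q,
\]
which is equivalent to $[1-kk^*]_n = 0$, i.e.\ $[kk^*]_n = 1$. This gives both directions simultaneously: the equivalent form of $k^*$-translatability of $(Q,*)$ holds for all $i,j$ iff $[kk^*]_n=1$.

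There is no serious obstacle; the only point requiring a moment of care is ensuring we have distinctness of the $a_\ell$ before cancelling indices, which is exactly what left cancellativity of $(Q,\cdot)$ buys us via Lemma~\ref{L-cond}(i). Note that we do \emph{not} need to assume anything about cancellativity of the dual, because the criterion from Lemma~\ref{L-basic}(ii) applies to any finite groupoid.
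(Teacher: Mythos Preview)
Your proof is correct and follows essentially the same route as the paper: both reduce $k^*$-translatability of $(Q,*)$ to the identity $a_{[k-kj+i]_n}=a_{[k-kj-kk^*+i+1]_n}$ via Lemma~\ref{L-basic}, and then use left cancellativity (distinctness of the $a_\ell$) to compare indices. The only cosmetic difference is that the paper handles $(\Rightarrow)$ by plugging in a single convenient pair of elements, whereas you run the equivalence uniformly for all $i,j$; the content is the same.
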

\begin{proof}
$(\Rightarrow)$: Suppose that $(Q,\cdot)$, where $Q=\{1,2,\ldots,n\}$, is $k$-translatable and the first row of its multiplication table has the form $a_1,a_2,\ldots,a_n$. If its dual groupoid $(Q,*)$ is $k^*$-translatable, then $1*[1-k^*]_n=[1-k^*]_n\cdot 1=a_{[k-k(1-k^*)+1]_n}=a_{[kk^*+1]_n}$. But on the other hand, $1*[1-k^*]_n=2*1=1\cdot 2=a_2$. So, $a_{[kk^*+1]_n}=a_2$. Left cancellability implies $[kk^*+1]_n=2$, which gives $[kk^*]_n=1$.

$(\Leftarrow)$: Suppose that $[kk^*]_n=1$. Then $[i+1]_n*[j+k^*]_n=a_{[k-k(j+k^*)+(i+1)]_n}=a_{[k-kj-kk^*+i+1]_n}=a_{[k-kj+i]_n}=i*j.
$ Hence, $(Q,*)$ is $k^*$-translatable with respect to the ordering $1,2,\ldots,n$.
\end{proof}

\begin{corollary}
A left cancellative, $k$-trans\-la\-tab\-le groupoid $(Q,\cdot)$ of order $n$ and its dual groupoid $(Q,*)$ are $k$-translatable, for the same value of $k$, if and only if \ $[k^2]_n=1$.
\end{corollary}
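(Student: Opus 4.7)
The plan is to derive this as an immediate specialization of Theorem \ref{dual}. That theorem asserts that, for a left cancellative $k$-translatable $(Q,\cdot)$ of order $n$, the dual $(Q,*)$ is $k^{*}$-translatable if and only if $[kk^{*}]_{n}=1$. The corollary is the case $k^{*}=k$.

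First I would recall the hypothesis: $(Q,\cdot)$ is left cancellative and $k$-translatable, with the natural ordering $1,2,\ldots,n$. For the forward direction, assume $(Q,*)$ is also $k$-translatable with respect to the same ordering; then apply Theorem \ref{dual} with $k^{*}=k$, which gives $[k\cdot k]_{n}=[k^{2}]_{n}=1$. For the reverse direction, assume $[k^{2}]_{n}=1$; then $[kk^{*}]_{n}=1$ is satisfied by $k^{*}=k$, so Theorem \ref{dual} directly yields that $(Q,*)$ is $k$-translatable with respect to $1,2,\ldots,n$.

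There is essentially no obstacle here: the entire content is packaged in Theorem \ref{dual}, and this is just the self-dual specialization $k^{*}=k$. The only point worth a sentence in the written proof is to record explicitly that applying the theorem in the forward direction requires identifying the unique $k^{*}$ for which $(Q,*)$ is translatable with the value $k$, which is legitimate since, by the earlier lemma, a left cancellative groupoid can be $k$-translatable (for a fixed ordering) for at most one value.
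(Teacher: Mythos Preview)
Your argument is correct and matches the paper's intent: the corollary is stated without proof there, as an immediate specialization of Theorem \ref{dual} with $k^{*}=k$. Your extra remark about uniqueness of $k$ for a fixed ordering is a valid clarification but not needed for the argument.
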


\begin{corollary}
A left unitary, $k$-trans\-la\-tab\-le groupoid $(Q,\cdot)$ of order $n$ and its dual groupoid $(Q,*)$ are $k$-translatable, for the same value of $k$, if and only if $(Q,\cdot)$ is paramedial.
\end{corollary}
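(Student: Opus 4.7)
The plan is essentially to combine two results already established in the excerpt. First, note that any left unitary groupoid is left cancellative: the left neutral element $1$ is left cancellable, so by Lemma \ref{L-lc} the whole groupoid is left cancellative. Hence the preceding Corollary applies to $(Q,\cdot)$, giving that $(Q,\cdot)$ and its dual $(Q,*)$ are $k$-translatable for the same value of $k$ if and only if $[k^2]_n = 1$.

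Second, invoke Theorem \ref{para}(f), which states that a left unitary $k$-translatable groupoid is paramedial if and only if $[k^2]_n = 1$.

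Chaining these two equivalences through the common condition $[k^2]_n = 1$ yields the corollary immediately. Since this is purely a quotation of two previously proved equivalences, there is no real obstacle — the only thing worth spelling out is the initial remark that ``left unitary'' supplies the ``left cancellative'' hypothesis needed to legitimately apply the preceding corollary. The proof should therefore be only a couple of sentences long.
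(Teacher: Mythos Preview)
Your proposal is correct and is exactly the argument the paper has in mind: the corollary is stated without proof in the paper precisely because it follows by combining the immediately preceding corollary (via the observation, made at the start of Section~3, that a $k$-translatable left unitary groupoid is left cancellative) with Theorem~\ref{para}(f) through the common condition $[k^2]_n=1$.
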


\begin{corollary}
A dual groupoid of a left cancellative, $k$-translatable groupoid $(Q,\cdot)$ of order $n$ is  $[n-tk]_n$-translatable if and only if \ $[tk^2]_n=n-1$.
\end{corollary}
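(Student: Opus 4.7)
The plan is to recognize this corollary as an immediate specialization of Theorem \ref{dual}. Since that theorem fully characterizes when the dual of a left cancellative $k$-translatable groupoid is $k^*$-translatable, namely by the condition $[kk^*]_n=1$, the strategy is simply to substitute $k^*=[n-tk]_n$ and simplify both sides.

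Concretely, I would first set $k^*=[n-tk]_n$. Applying Theorem \ref{dual}, $(Q,*)$ is $k^*$-translatable if and only if
\[
[kk^*]_n = [k(n-tk)]_n = [-tk^2]_n = 1,
\]
and the equation $[-tk^2]_n=1$ is equivalent to $[tk^2]_n = n-1$ (recalling the convention $0=n$). This gives both directions of the biconditional at once.

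The only minor point that needs verification is that $k^*=[n-tk]_n$ is a legitimate translation index, i.e., that $1\leqslant k^*<n$, so Theorem \ref{dual} may be invoked. This is automatic under the stated hypothesis: if $[tk]_n=0$, then also $[tk^2]_n=0\ne n-1$, contradicting the assumption; so $[n-tk]_n\in\{1,2,\ldots,n-1\}$, and the substitution is valid. I do not expect any genuine obstacle here beyond this bookkeeping check, because all the real work has been done in Theorem \ref{dual}.
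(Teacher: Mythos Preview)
Your proposal is correct and matches the paper's intended approach: the paper states this corollary without proof, leaving it as an immediate consequence of Theorem~\ref{dual} via the substitution $k^*=[n-tk]_n$, exactly as you carry out. Your extra bookkeeping check that $[n-tk]_n$ is a legitimate translation index is a nice touch the paper does not bother to mention.
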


\begin{corollary}\label{C45} 
A left cancellative $k$-translatable groupoid of order $n$ is alterable if and only if its dual groupoid is $(n-k)$-translatable.
\end{corollary}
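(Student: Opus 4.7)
The plan is to reduce this immediately to two earlier results and perform one short modular computation. By Corollary \ref{C-alter}, saying that a left cancellative $k$-translatable groupoid of order $n$ is alterable is equivalent to the arithmetic condition $[k^2]_n = n-1$. So the task is to show the dual groupoid is $(n-k)$-translatable if and only if $[k^2]_n = n-1$.

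For this direction I would invoke Theorem \ref{dual} with $k^* = n-k$: the dual $(Q,*)$ is $(n-k)$-translatable if and only if $[k(n-k)]_n = 1$. Now perform the short computation
\[
[k(n-k)]_n = [kn - k^2]_n = [-k^2]_n,
\]
so $[k(n-k)]_n = 1$ precisely when $[-k^2]_n = 1$, i.e.\ precisely when $[k^2]_n = n-1$. This matches the condition delivered by Corollary \ref{C-alter}, and combining the two equivalences yields the corollary.

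There is essentially no obstacle here; the only thing to be slightly careful about is making sure the hypotheses of Theorem \ref{dual} apply, namely that $(Q,\cdot)$ is left cancellative and $k$-translatable (both are in the assumptions of the corollary), and that $n-k$ is a legitimate value for $k^*$, i.e.\ lies in $\{1,2,\ldots,n-1\}$; since $1 \leqslant k < n$, we have $1 \leqslant n-k < n$, so this is fine. The whole argument therefore fits into a couple of lines.
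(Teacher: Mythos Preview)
Your proof is correct and follows essentially the same approach as the paper: both reduce the equivalence to Corollary~\ref{C-alter} and Theorem~\ref{dual}, then observe $[k(n-k)]_n=[-k^2]_n$ so that $[k(n-k)]_n=1$ is equivalent to $[k^2]_n=n-1$. Your additional remark verifying that $n-k\in\{1,\ldots,n-1\}$ is a small refinement not made explicit in the paper.
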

\begin{proof} $(\Rightarrow)$: Since, by Corollary \ref{C-alter}, $[k^2]_n=n-1$ we have $[k(n-k)]_n=[-k^2]_n=1$. By Theorem \ref{dual}, the dual of $Q$ is $(n-k)$-translatable.

$(\Leftarrow)$: Since $Q$ is $k$-translatable then, by Theorem \ref{dual}, $[k(n-k)]_n=[-k^2]_n=1$ . This implies $[k^2]_n=n-1$, which, by Corollary \ref{C-alter}, shows that $Q$ is alterable. 
\end{proof}


\section*{\centerline{5. Translatable semigroups}}\setcounter{section}{5}\setcounter{theorem}{0}

Observe first that in a $k$-translatable groupoid $Q$ of order $n$ with a $k$-translatable sequence $a_1,a_2,\ldots,a_n$ we have
\begin{equation}\label{eas}
(x\cdot y)\cdot z=x\cdot (y\cdot z)\Longleftrightarrow a_{[k-ka_{[k-kx+y]_n}+z]_n}=a_{[k-kx+a_{[k-ky+z]_n}]_n}.
\end{equation} 
If $Q$ is left cancellative, then the right side of \eqref{eas} is equivalent to 
\begin{equation}\label{ee1}
[z-ka_{[k-kx+y]_n}]_n=[a_{[k-ky+z]_n}-kx]_n .
\end{equation}
Thus, a left cancellative, $k$-translatable groupoid $Q$ of order $n$ is a semigroup if and only if it satisfies \eqref{ee1}.

\begin{theorem}\label{semi}
A left cancellative, $k$-translatable groupoid $Q$ of order $n$ with a $k$-translatable sequence $a_1,a_2,\ldots,a_n$ is a semigroup if and only if $[k^2+k]_n=0$ and $a_i=[i-k-ka_k]_n$ for all $i=1,2,\ldots,n$.
\end{theorem}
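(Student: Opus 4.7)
The approach is to use the equivalent characterization \eqref{ee1} of associativity that the excerpt has already set up, and extract the two necessary conditions by making well chosen substitutions; then run the same computation backwards for the converse.

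For the forward direction, the plan is to first substitute $x=y=n$ into \eqref{ee1}. Because $kn\equiv 0\pmod n$ and $[k-kn+n]_n = k$, the identity collapses to
\[
a_{[k+z]_n} \;=\; [z-ka_k]_n \quad \text{for all } z\in Q.
\]
Reparametrising $z$ by $i=[k+z]_n$ (a bijection on $Q$) yields the required formula $a_i=[i-k-ka_k]_n$. Once this formula is in hand, I would plug it into both sides of \eqref{ee1} for arbitrary $x,y,z$: the left side becomes $[z-ky+k^2x+k^2a_k]_n$ and the right side becomes $[z-ky-kx-ka_k]_n$, so \eqref{ee1} is equivalent to
\[
[(k^2+k)(x+a_k)]_n \;=\; 0 \quad \text{for all } x\in Q.
\]
Evaluating this at $x=1$ and at $x=2$ and subtracting the two instances gives $[k^2+k]_n=0$, completing the forward direction.

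For the converse, assume both conditions. The same computation as above shows that after substituting $a_i=[i-k-ka_k]_n$ into \eqref{ee1}, the two sides differ by exactly $[(k^2+k)(x+a_k)]_n$, which is $0$ by hypothesis. Hence \eqref{ee1} holds for all $x,y,z\in Q$, so by the remark following \eqref{ee1}, $Q$ is a semigroup.

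The computational obstacle is not deep but must be handled carefully: one has to keep track of residues modulo $n$ under the convention $0=n$, and to remember that $a_k$ appears inside a residue class, so multiplying by $-k$ introduces a $k^2 a_k$ term that must be carried through cleanly. The key observation that makes the whole proof work is that the single substitution $x=y=n$ uses the vanishing of $kn$ modulo $n$ to eliminate three of the four $x,y,z$ slots simultaneously, which is what isolates the formula for $a_i$ before any assumption on $k^2+k$ is needed.
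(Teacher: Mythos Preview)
Your proof is correct and follows the same route as the paper: both use the substitution $x=y=n$ in \eqref{ee1} to obtain $a_i=[i-k-ka_k]_n$, and both verify \eqref{ee1} directly for the converse. Your extraction of $[k^2+k]_n=0$ (by substituting the formula back into the general identity and comparing $x=1$ with $x=2$) is a minor streamlining of the paper's use of the specific substitution $x=y=z=1$ together with some algebra on $a_1$ and $a_k$, but the overall argument is the same.
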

\begin{proof}
$(\Rightarrow)$: Setting $x=y=n$ and $z=[i-k]_n$ in \eqref{ee1} we obtain $a_i=[i-k-ka_k]_n$.
Consequently, $a_k=[-ka_k]_n=[a_1+k-1]_n$, which implies
$[1-k-a_1]_n=[ka_k]_n=[ka_1+k^2-k]_n$. Hence $[1-k^2]_n=[a_1(k+1)]_n$. 
Since, setting $x=y=z=1$ in \eqref{ee1} we obtain $[a_1(k+1)]_n=[k+1]_n$, the last equation gives $[1-k^2]_n=[k+1]_n$. Hence, $[k^2+k]_n=0$.

$(\Leftarrow)$: From $a_i=[i-k-ka_k]_n$ we conclude $a_k=[-ka_k]_n$. Thus, $[-ka_k]_n=[k^2a_k]_n$. Now using the above and $[k^2+k]_n=0$ we obtain
$$
\arraycolsep=.5mm
\begin{array}{rll}
[z-ka_{[k-kx+y]_n}]_n&=&[z-k(y-kx-ka_k)]_n=[z-ky+k^2x+k^2a_k]_n\\[4pt]
&=&[z-ky-kx-ka_k]_n=[a_{[k-ky+z]_n}-kx]_n,
\end{array}
$$
which proves \eqref{ee1}. Hence $Q$ is a semigroup.
\end{proof}

\begin{corollary}\label{lmonoid}
A $k$-translatable semigroup is left cancellative if an only if it has a left neutral element.
\end{corollary}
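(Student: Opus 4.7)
The plan is to split the biconditional into two implications of rather different flavors. The reverse direction $(\Leftarrow)$ is essentially automatic: given a left neutral element $e$, the equation $e\cdot s = e\cdot t$ forces $s = t$, so $e$ is a left cancellable element of $Q$, and Lemma~\ref{L-lc} then upgrades the presence of a single left cancellable element in a $k$-translatable groupoid to full left cancellativity.

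For the forward direction $(\Rightarrow)$, I would exhibit the left neutral element explicitly in terms of the $k$-translatable sequence. Assuming $Q$ is left cancellative (and a semigroup by hypothesis), Theorem~\ref{semi} supplies the two key ingredients $[k^2+k]_n = 0$ and $a_i = [i - k - ka_k]_n$. These suggest the candidate $e = [k+1 - a_k]_n$, engineered precisely so that $e + a_k \equiv k+1 \pmod n$ and therefore $k(e + a_k) \equiv k(k+1) \equiv 0 \pmod n$. Using Lemma~\ref{L-basic}$(i)$ I would then compute
\[
e \cdot j = a_{[k-ke+j]_n} = [(k - ke + j) - k - ka_k]_n = [j - k(e + a_k)]_n = j,
\]
which shows $e$ is indeed a left neutral element of $Q$.

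The only real obstacle is guessing the correct form of $e$; once Theorem~\ref{semi} is in hand, the verification is a one-line substitution. A small bookkeeping point is that $e \in Q = \{1,2,\ldots,n\}$ is guaranteed by the paper's standing convention $0 = n$.
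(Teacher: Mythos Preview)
Your argument is correct and follows the same strategy as the paper: use Theorem~\ref{semi} (the formula $a_i=[i-k-ka_k]_n$ together with $[k^2+k]_n=0$) to exhibit an explicit left neutral element, and for the converse invoke (implicitly, in the paper's case) Lemma~\ref{L-lc}. The only difference is cosmetic: you take $e=[k+1-a_k]_n$, whereas the paper uses $a_{[-2a_k-1]_n}=[-k-1-a_k]_n$; both choices work since such semigroups generally admit several left neutral elements (cf.\ Lemma~\ref{idemp}).
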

\begin{proof}
If $Q$ is a left cancellative and $k$-translatable semigroup, then by previous theorem $a_i=[i-k-ka_k]_n$ and $[-ka_k]_n=a_k$. So, $a_{[-2a_k-1]_n}=[-k-1-a_k]_n$ and $a_{[-2a_k-1]_n}\cdot a_j=a_t$, where 
$$
t=[k-ka_{[-2a_k-1]_n}+a_j]_n=[k-k(-k-1-a_k)+(j-k-ka_k)]_n=j.
$$
Thus, $a_{[-2a_k-1]_n}$ is a left neutral element.

The converse statement is obvious.
\end{proof}

\begin{corollary}\label{C-unit}
A left unitary, $k$-translatable groupoid $Q$ of order $n$ is a semigroup if and only if $[k+k^2]_n=0$.
\end{corollary}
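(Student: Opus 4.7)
The plan is to derive Corollary \ref{C-unit} as a direct specialization of Theorem \ref{semi} to the left unitary case. The key observation is that for a left unitary groupoid the $k$-translatable sequence is forced to coincide with the natural ordering, so the two conditions of Theorem \ref{semi} collapse into the single congruence $[k+k^2]_n = 0$.

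First I would record the setup. Since $1$ is a left neutral element, the first row of the multiplication table is $1, 2, \ldots, n$, i.e.\ $a_i = i$ for every $i \in Q$. In particular $a_k = k$. Also, the existence of a left neutral element makes $1$ a left cancellable element, so by Lemma \ref{L-lc} the groupoid $Q$ is left cancellative, which is exactly the hypothesis needed to invoke Theorem \ref{semi}.

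For the forward direction, if $Q$ is a semigroup then Theorem \ref{semi} immediately yields $[k^2+k]_n = 0$, so there is nothing more to do. For the converse, assume $[k+k^2]_n = 0$. By Theorem \ref{semi}, it suffices to verify the two conditions $[k^2+k]_n = 0$ and $a_i = [i - k - k a_k]_n$ for all $i$. The first condition is the hypothesis, and the second, using $a_i = i$ and $a_k = k$, reduces to the identity
\begin{equation*}
i \;=\; [\,i - k - k \cdot k\,]_n \;=\; [\,i - (k + k^2)\,]_n,
\end{equation*}
which holds precisely because $[k+k^2]_n = 0$.

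There is no real obstacle here; the content is entirely packaged in Theorem \ref{semi}, and the only thing to check is the bookkeeping that identifies $a_i$ with $i$ in the left unitary case and that left unitary forces left cancellativity so Theorem \ref{semi} can be applied at all.
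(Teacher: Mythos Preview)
Your proof is correct and follows exactly the intended approach: the paper states Corollary \ref{C-unit} without proof precisely because it is the specialization of Theorem \ref{semi} to the case $a_i=i$, $a_k=k$, together with the observation (already noted in the paper before Theorem \ref{T-izo}) that a left unitary $k$-translatable groupoid is left cancellative via Lemma \ref{L-lc}. There is nothing to add.
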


\begin{corollary}
A left unitary, $k$-translatable groupoid $Q$ of order $n=k+k^2$ is a semigroup with the multiplication given by the formula 
\begin{equation}\label{ee3}
(i+sk)\cdot (j+tk)=[j+(s+t-i+1)k]_n
\end{equation} 
where $i,j\in\{1,2,\ldots,k\}$ and $s,t\in\{0,1,2,\ldots,k\}$.
\end{corollary}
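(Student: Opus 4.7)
The plan is to break the statement into two parts: first, establish that $Q$ is a semigroup, and second, verify the closed-form multiplication formula. The semigroup part is essentially free: since $n = k + k^2$, we have $[k+k^2]_n = [n]_n = 0$, so Corollary \ref{C-unit} immediately yields associativity. Hence the real work is just checking the formula.

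For the formula, I would start from the fact established right before Theorem \ref{T-izo} that every left unitary $k$-translatable groupoid $Q$ satisfies $x\cdot y = [k-kx+y]_n$ for all $x,y \in Q$. Before substituting, I need to confirm the parametrization makes sense: as $i$ ranges over $\{1,\dots,k\}$ and $s$ ranges over $\{0,1,\dots,k\}$, the expression $i+sk$ takes exactly $k(k+1) = k^2+k = n$ values, and these values cover $\{1, 2, \dots, n\}$ without repetition (each residue class mod $k$ is hit $k+1$ times with $s$ stepping the quotient). So the parametrization is a bijection between pairs $(i,s)$ and elements of $Q$.

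With this in hand, the verification is a direct modular calculation. Using $n = k+k^2$, we have $k^2 \equiv -k \pmod n$, hence $sk^2 \equiv -sk \pmod n$. Substituting into the left unitary formula,
\[
(i+sk)\cdot(j+tk) = [\,k - k(i+sk) + (j+tk)\,]_n = [\,k - ki - sk^2 + j + tk\,]_n,
\]
and replacing $-sk^2$ by $sk$ yields
\[
[\,j + k + sk + tk - ki\,]_n = [\,j + (s+t-i+1)k\,]_n,
\]
which is the claimed identity \eqref{ee3}.

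There is no real obstacle here; the only point where one must be slightly careful is ensuring the parametrization $(i,s)\mapsto i+sk$ is a bijection onto $\{1,\dots,n\}$ given the nonstandard range of $s$ (including $s=k$), and that the substitution $k^2 \equiv -k \pmod n$ is applied correctly. Both are routine once one notices that the condition $n = k+k^2$ is precisely what makes the $k^2$ term collapse into a $k$ term, which is what allows the multiplication to be expressed cleanly in the $(i,s)$ coordinates.
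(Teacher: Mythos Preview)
Your proposal is correct and follows essentially the same approach as the paper: note the parametrization $i+sk$ of $Q$, invoke the left unitary formula $x\cdot y=[k-kx+y]_n$, and simplify using $k^2\equiv -k\pmod n$. The only cosmetic difference is order---you establish associativity first via Corollary~\ref{C-unit}, whereas the paper derives the formula first and then remarks that associativity is straightforward---but the substance is identical.
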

\begin{proof} 
Observe first that all elements of $Q$ can be written in the form $i+sk$, where $i\in\{1,2,\ldots,k\}$ and $s\in\{0,1,2,\ldots,k\}$.
Since $Q$ is left unitary and $k$-translatable, $x\cdot y=[k-kx+y]_n$ for all $x,y\in Q$. Therefore 
$
(i+sk)\cdot (j+tk)=[k-k(i+sk)+(j+tk)]_n=[j+(s+t-i+1)k]_n
$. It is straightforward to verify that this multiplication is associative.   
\end{proof}
Such defined semigroup is left cancellative and non-commutative. This semigroup is not a group. The equation $(x+yk)\cdot (j+tk)=(i+sk)$ always has $k$ solutions. All these solutions have the form $x+(k+1)m+yk$, $m=1,2,\ldots,k$.
\begin{corollary}
A groupoid $Q$ of order $n=k+k^2$ with the multiplication defined by $\eqref{ee3}$ is a left unitary, $k$-translatable semigroup.
\end{corollary}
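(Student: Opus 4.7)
The strategy is to reduce the formula \eqref{ee3} to the standard left unitary multiplication $x\cdot y=[k-kx+y]_n$ and then invoke Corollary \ref{C-unit}. The computation is essentially already done in the preceding corollary, just read in reverse.

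First I would check that the representation is legitimate: because $n=k+k^2=k(k+1)$, every element $x\in Q$ can be written uniquely as $x=i+sk$ with $i\in\{1,\ldots,k\}$ and $s\in\{0,1,\ldots,k\}$, so the formula \eqref{ee3} defines a binary operation on $Q$ without ambiguity. Next, using the key congruence $k^2\equiv -k\pmod{n}$ (which holds because $k^2+k=n\equiv 0$), I would rewrite the right-hand side of \eqref{ee3}. For $x=i+sk$ and $y=j+tk$,
\begin{equation*}
k-kx+y=k-ki-sk^2+j+tk\equiv k-ki+sk+j+tk=j+(s+t-i+1)k\pmod{n},
\end{equation*}
so \eqref{ee3} is equivalent to $x\cdot y=[k-kx+y]_n$.

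From this single identity the three claims fall out. Taking $x=1$ gives $1\cdot y=[k-k+y]_n=y$, so $1$ is a left neutral element and the first row of the multiplication table is $1,2,\ldots,n$; in particular $Q$ is left unitary in the sense of Section 3. Because this first row is $a_\ell=\ell$ for all $\ell$, the formula $i\cdot j=a_{[k-ki+j]_n}$ of Lemma \ref{L-basic}$(i)$ is satisfied, so $Q$ is $k$-translatable. Finally, since $[k+k^2]_n=[n]_n=0$, Corollary \ref{C-unit} applies and $Q$ is a semigroup.

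There is no real obstacle here; the only thing to be careful about is the modular reduction $k^2\equiv -k\pmod{n}$ that converts the ``compact'' formula \eqref{ee3} into the canonical left unitary form. Everything else is a direct appeal to results already established.
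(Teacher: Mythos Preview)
Your proof is correct, and it is somewhat different in spirit from the paper's. The paper verifies the three claims directly from the formula \eqref{ee3}: it asserts associativity (deferring to the verification mentioned in the preceding corollary), notes that $1$ is a left neutral element, and checks $k$-translatability by computing $[(i+1)+sk]_n\cdot[j+(t+1)k]_n=[j+(t+s-i+1)k]_n=(i+sk)\cdot(j+tk)$ and appealing to Lemma~\ref{L-basic}$(ii)$.

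Your route instead identifies \eqref{ee3} with the canonical left unitary product $x\cdot y=[k-kx+y]_n$ via the congruence $k^2\equiv -k\pmod n$, after which left unitarity, $k$-translatability (via Lemma~\ref{L-basic}$(i)$ with $a_\ell=\ell$), and associativity (via Corollary~\ref{C-unit}) all follow from the general theory already in place. This is arguably cleaner: it avoids verifying associativity by hand, and it sidesteps the boundary issue in the paper's translatability check (when $i+1=k+1$ or $t+1=k+1$ the representation $i+sk$ has to be renormalised before \eqref{ee3} can be applied, a point the paper glosses over). The cost is only the one-line modular computation you already isolated.
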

\begin{proof} 
Indeed, this multiplication is associative and $1$ is its left neutral element. Also, $[(i+1)+sk]_n\cdot [j+(t+1)k]_n=[j+(t+s-i+1)k]_n= [i+sk]_n\cdot [j+tk]_n$, so $Q$ is $k$-translatable.
\end{proof}

\begin{theorem}\label{T-reord}
By re-ordering, any left cancellative $k$-translatable semigroup can be transformed into a left unitary $k$-translatable semigroup.
\end{theorem}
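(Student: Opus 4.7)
The plan is to combine three results already established in the paper: Corollary \ref{lmonoid} produces a left neutral element in any left cancellative $k$-translatable semigroup; Corollary \ref{C-order} guarantees that $k$-translatability survives any cyclic re-ordering of the underlying set; and the fact that re-labeling an operation via a bijection yields an isomorphic groupoid, which automatically preserves associativity, left cancellativity, and the $k$-translatable sequence.

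Concretely, start with a left cancellative $k$-translatable semigroup $(Q,\cdot)$ of order $n$, with $Q=\{1,2,\ldots,n\}$ naturally ordered and $k$-translatable sequence $a_1,\ldots,a_n$. By Corollary \ref{lmonoid} there is a left neutral element $e\in Q$. By iterating Lemma \ref{L-3} (the content of Corollary \ref{C-order}) I can cyclically re-order the elements of $Q$ so that $e$ occupies the first position, and with respect to this new ordering $(Q,\cdot)$ is still $k$-translatable. Next I re-label by the bijection $\varphi\colon Q\longrightarrow\{1,2,\ldots,n\}$ that sends each element to its position in the new ordering, so that $\varphi(e)=1$, and transport the multiplication to an operation $*$ on $\{1,\ldots,n\}$ via $\varphi(x)*\varphi(y)=\varphi(x\cdot y)$. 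Since $\varphi$ is an isomorphism of groupoids, the structure $(\{1,\ldots,n\},*)$ with its natural ordering is a left cancellative $k$-translatable semigroup.

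It then remains to observe that $(\{1,\ldots,n\},*)$ is left unitary: for every $y\in Q$,
$$1*\varphi(y)=\varphi(e)*\varphi(y)=\varphi(e\cdot y)=\varphi(y),$$
so $1$ is a left neutral element for $*$ and the re-labeled semigroup is left unitary by definition. The one point that deserves care is that the cyclic re-ordering does preserve $k$-translatability, but this is precisely what Lemma \ref{L-3} and Corollary \ref{C-order} provide. No genuine obstacle arises; the proof essentially consists of assembling these three earlier results.
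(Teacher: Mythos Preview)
Your proof is correct and follows essentially the same approach as the paper: both cyclically re-order $Q$ so that the left neutral element occupies the first position, after which $k$-translatability and left unitarity are immediate. The only difference is presentational---the paper writes down the explicit shift $b_s=[-a_k-k+s-2]_n$ and verifies $b_i\cdot b_j=b_{[j-ki+k]_n}$ directly from Theorem~\ref{semi}, whereas you invoke Corollary~\ref{lmonoid} for the existence of $e$ and Corollary~\ref{C-order} for the preservation of $k$-translatability under cyclic shifts.
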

\begin{proof}
Let $Q$ be a left cancellative $k$-translatable semigroup and $a_1,a_2,\ldots,a_n$ be its $k$-translatable sequence with respect to the ordering $1,2,\ldots,n$. 
Consider a new ordering $b_1,b_2,\ldots,b_n$ of $Q$, where $b_s=[-a_k-k+s-2]_n$, $s=1,2,\ldots,n$.

Using Theorem \ref{semi}, it is not difficult to see that $b_i\cdot b_j=b_{[j-ki+k]_n}$. Thus $b_1$ is a left neutral element of $Q$ and, by Lemma \ref{L-basic} $(i)$, $Q$ is $k$-translatable with respect to the ordering $b_1,b_2,\ldots,b_n$.
\end{proof}

Thus by re-ordering and re-numeration of elements we can assume that each left cancellative $k$-translatable semigroup is left unitary with $1$ as its left neutral element.

As a consequence of Theorem \ref{T-reord} and Theorem \ref{T-izo} we have:

\begin{corollary}\label{C-izo}
Left cancellative $k$-translatable semigroups of the same order are isomorphic.
\end{corollary}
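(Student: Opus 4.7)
The plan is to chain Theorem \ref{T-reord} with Theorem \ref{T-izo} in the obvious way, taking care to interpret re-ordering as an isomorphism of the underlying abstract groupoid.

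First, given two left cancellative $k$-translatable semigroups $(Q_1,\cdot)$ and $(Q_2,\ast)$ of order $n$, I would invoke Theorem \ref{T-reord} separately for each: there exist re-orderings of the elements of $Q_1$ and $Q_2$ with respect to which they become left unitary $k$-translatable semigroups $(Q_1,\cdot)'$ and $(Q_2,\ast)'$. The key observation to record explicitly is that ``re-ordering'' in the sense of Theorem \ref{T-reord} is merely a relabeling of rows and columns of the Cayley table; the underlying binary operation on the set is unchanged. Thus the identity map on the set, composed with the relabeling bijection, gives a groupoid isomorphism $(Q_1,\cdot)\cong(Q_1,\cdot)'$, and likewise $(Q_2,\ast)\cong(Q_2,\ast)'$.

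Next, since $(Q_1,\cdot)'$ and $(Q_2,\ast)'$ are both left unitary $k$-translatable groupoids of the same order $n$ (for the same value of $k$, as $k$-translatability is preserved by the re-ordering in Theorem \ref{T-reord}), Theorem \ref{T-izo} applies directly and yields an isomorphism $\varphi\colon(Q_1,\cdot)'\longrightarrow (Q_2,\ast)'$. Composing $\varphi$ with the two relabeling isomorphisms above produces the desired isomorphism $Q_1\cong Q_2$.

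There is essentially no obstacle here; the only point that deserves a sentence of justification is that the map witnessing the re-ordering in Theorem \ref{T-reord} is indeed a groupoid isomorphism (which is immediate once one writes down that $b_s=[-a_k-k+s-2]_n$ defines a bijection $s\mapsto b_s$ on $Q$ under which the original multiplication becomes the left unitary multiplication shown in the proof of that theorem). Hence the corollary follows with no additional computation.
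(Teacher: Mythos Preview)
Your proposal is correct and follows exactly the route the paper indicates: the corollary is stated immediately after Theorem~\ref{T-reord} with the one-line justification ``As a consequence of Theorem~\ref{T-reord} and Theorem~\ref{T-izo},'' and your argument simply unpacks that chain. The only remark is that your discussion of the re-ordering as an isomorphism is slightly more elaborate than necessary---the re-ordering of Theorem~\ref{T-reord} leaves the set and the operation literally unchanged, so $(Q_i,\cdot)$ and $(Q_i,\cdot)'$ are the \emph{same} groupoid, not merely isomorphic ones---but this does not affect the validity of the argument.
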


\begin{corollary}\label{cyclic}
A left cancellative groupoid $Q$ of order $n$ with an $(n-1)$-translatable sequence of the form $a_{[i+1]_n}=[a_i+1]_n$  is a cyclic group. 
\end{corollary}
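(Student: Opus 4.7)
The approach is to turn the recurrence on the $(n-1)$-translatable sequence into an explicit closed formula for the product, and then read off the cyclic group structure directly.

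First I would unwind the hypothesis $a_{[i+1]_n}=[a_i+1]_n$ by induction to obtain $a_i=[a_1+i-1]_n$ for every $i\in\{1,2,\ldots,n\}$; in particular the sequence is a permutation of $\{1,\ldots,n\}$, consistent with the left cancellativity assumption. Since $k=n-1$, Lemma \ref{L-cond}(xi) already tells us $Q$ is commutative, so it suffices to prove associativity and the existence of inverses.

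Next I would substitute into the product formula from Lemma \ref{L-basic}(i). With $k=n-1\equiv -1\pmod n$, the index $[k-ki+j]_n$ simplifies to $[i+j-1]_n$, so
\begin{equation*}
i\cdot j \;=\; a_{[i+j-1]_n} \;=\; [a_1+i+j-2]_n.
\end{equation*}
Setting $c=[a_1-2]_n$ gives the compact formula $i\cdot j=[i+j+c]_n$ for all $i,j\in Q$. From this formula, associativity is immediate, since both $(i\cdot j)\cdot\ell$ and $i\cdot(j\cdot\ell)$ evaluate to $[i+j+\ell+2c]_n$. (Alternatively one can invoke Theorem \ref{semi}: the condition $[k^2+k]_n=[(n-1)n]_n=0$ is automatic, and the required identity $a_i=[i-k-ka_k]_n$ follows from $a_i=[a_1+i-1]_n$ by direct substitution.)

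Finally I would exhibit the isomorphism with $(\mathbb{Z}_n,+)$ by defining $\psi\colon Q\to\mathbb{Z}_n$ via $\psi(i)=[i+c]_n$. This is plainly a bijection, and
\begin{equation*}
\psi(i\cdot j) \;=\; [i+j+c+c]_n \;=\; \psi(i)+\psi(j),
\end{equation*}
so $\psi$ is a group isomorphism onto the cyclic group of order $n$. No real obstacle arises; the only care needed is bookkeeping with the convention $0=n$ and the reduction $(n-1)i\equiv -i\pmod n$ when simplifying the index in Lemma \ref{L-basic}(i).
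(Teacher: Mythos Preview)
Your argument is correct. The recurrence does give $a_i=[a_1+i-1]_n$, the index simplification $[k-ki+j]_n=[i+j-1]_n$ for $k=n-1$ is right, and the resulting closed form $i\cdot j=[i+j+c]_n$ with $c=[a_1-2]_n$ immediately yields associativity and the explicit isomorphism $\psi(i)=[i+c]_n$ onto $(\mathbb{Z}_n,+)$.

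The paper proceeds differently: it first argues structurally. From Lemma~\ref{L-cond}$(xi)$ it gets commutativity, hence right cancellativity and the quasigroup property; then it checks the two hypotheses of Theorem~\ref{semi} (namely $[k+k^2]_n=0$ and $a_i=[i-k-ka_k]_n$) to conclude associativity, so $Q$ is a commutative group; finally it invokes Corollary~\ref{C-cyc} to deduce cyclicity. You mention this alternative yourself. Your route is more elementary and self-contained---once you have the explicit product formula, nothing further from the paper is needed, and you get the isomorphism with $\mathbb{Z}_n$ for free rather than appealing to a separate structural result. The paper's route, on the other hand, illustrates how the corollary fits into the machinery already built (Theorem~\ref{semi} and Corollary~\ref{C-cyc}), which is presumably why the authors chose it.
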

\begin{proof}
An $(n-1)$-translatable groupoid is commutative. So, if it is left cancellative, then it also is right cancellative. Hence it is a quasigroup. Moreover, $k=n-1$ and $a_{[i+1]_n}=[a_i+1]_n$ imply  $[k+k^2]_n=0$ and $a_i=[i-k-ka_k]_n$ for all $i=1,2,\ldots,n$.  By Theorem \ref{semi} then, $Q$ is a semigroup. Hence $Q$ is a commutative group. By Corollary \ref{C-cyc} it is cyclic.
\end{proof}

\begin{corollary}\label{C-semi} A left cancellative $(n-1)$-translatable semigroup of order $n$ is isomorphic to the additive group $\mathbb{Z}_n$.
\end{corollary}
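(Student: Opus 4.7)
The plan is to apply Corollary \ref{C-izo} directly: it suffices to exhibit the additive group $\mathbb{Z}_n$ itself as one example of a left cancellative $(n-1)$-translatable semigroup of order $n$, and then every such semigroup will be isomorphic to it.

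To exhibit $\mathbb{Z}_n$ in the required form, I would realize the group on the set $\{1,2,\ldots,n\}$ with operation $i\cdot j=[i+j-1]_n$, so that $1$ plays the role of the neutral element (this is the same labelling used in the first table of Example \ref{Ex1} for $n=4$). This is visibly a group, hence a left cancellative semigroup of order $n$. The first row of the multiplication table is $1\cdot 1,1\cdot 2,\ldots,1\cdot n=1,2,\ldots,n$, and row $i+1$ is $i+1,i+2,\ldots,n,1,2,\ldots,i$, which is obtained from row $i$ by moving the last entry to the front. That is precisely the $(n-1)$-translation rule ($k=n-1$ means inserting the last $n-1$ entries first and the first $1$ entry last), so $\mathbb{Z}_n$ is a left cancellative $(n-1)$-translatable semigroup. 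One can equivalently check via Lemma \ref{L-basic}$(ii)$ that $[i+1]_n\cdot[j+n-1]_n=i+j=i\cdot j$ in $\mathbb{Z}_n$.

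Having produced this example, Corollary \ref{C-izo} finishes the proof: any left cancellative $(n-1)$-translatable semigroup of order $n$ is isomorphic to this one, i.e.\ to $\mathbb{Z}_n$. There is essentially no obstacle to overcome; the only routine item is the verification that the cyclic shift of $1,2,\ldots,n$ matches the $(n-1)$-translation rule of Definition~2.1, which is immediate from the definition. (An alternative, longer route would go via Theorem~\ref{T-reord} to reduce to the left unitary case, then use Lemma~\ref{L-cond}$(xi)$ to get commutativity, conclude group structure from finite two-sided cancellativity, and finally invoke Corollary~\ref{C-cyc} to force the group to be cyclic, but this is strictly more work than the above.)
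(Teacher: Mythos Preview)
Your proof is correct and takes a genuinely different route from the paper's. The paper argues \emph{internally}: from Theorem~\ref{semi} with $k=n-1$ one reads off $a_i=[i+1+a_k]_n$, so $a_{[i+1]_n}=[a_i+1]_n$, and then Corollary~\ref{cyclic} (which packages commutativity, cancellativity, and Corollary~\ref{C-cyc}) gives that $Q$ is a cyclic group of order $n$. You argue \emph{externally}: exhibit $\mathbb{Z}_n$ in the guise $i\cdot j=[i+j-1]_n$ as a left cancellative $(n-1)$-translatable semigroup, and then invoke Corollary~\ref{C-izo} to transport any other such semigroup onto it. Your route is arguably slicker here, since it bypasses Corollary~\ref{cyclic} entirely and makes the target group explicit from the start; the paper's route, on the other hand, stays closer to the structure theory of the sequence $a_1,\ldots,a_n$ and does not need the re-ordering machinery behind Corollary~\ref{C-izo}.

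Two small slips to clean up: row $i+1$ is obtained from row $i$ by moving the \emph{first} entry to the back (equivalently, the last $n-1$ entries to the front), not ``the last entry to the front'' as you first wrote---your parenthetical already says this correctly. And in the Lemma~\ref{L-basic}$(ii)$ check, $[i+1]_n\cdot[j+n-1]_n=[i+j-1]_n=i\cdot j$, not $i+j$.
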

\begin{proof} By Theorem \ref{semi}, $a_i=[i+1+ak]_n$. Therefore, $a_{[i+1]_n}=[(i+1)+1+a_k]_n=[a_i+1]_n$. The result then follows from Corollary \ref{cyclic}.  
\end{proof}

\begin{proposition}
There are no idempotent $k$-translatable semigroups of order $n$.
\end{proposition}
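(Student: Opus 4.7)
The plan is to derive a contradiction by combining the structural constraints that Theorem \ref{semi} places on a left cancellative $k$-translatable semigroup with the pointwise constraint that idempotency imposes via Lemma \ref{L-basic}$(i)$.

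First I would record the setup. Suppose, for contradiction, that $Q$ is an idempotent $k$-translatable semigroup of order $n$ with $k$-translatable sequence $a_1,a_2,\ldots,a_n$. The observation in the proof of Lemma \ref{L-idtrans} rules out $k=1$, so $k>1$, and the same lemma gives that $Q$ is left cancellative. Hence Theorem \ref{semi} applies and yields the explicit formula
\begin{equation*}
a_i = [\,i - k - k a_k\,]_n \qquad \text{for all } i\in\{1,2,\ldots,n\}.
\end{equation*}

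Next I would plug idempotency into this formula. By Lemma \ref{L-basic}$(i)$, idempotency $i\cdot i = i$ says $i = a_{[k-ki+i]_n}$ for every $i\in Q$. Substituting the index $j=[k+i(1-k)]_n$ into the displayed formula gives $a_j = [k + i(1-k) - k - k a_k]_n = [i - ki - k a_k]_n$. Setting this equal to $i$ produces
\begin{equation*}
[\,k(i + a_k)\,]_n = 0 \qquad \text{for every } i\in\{1,2,\ldots,n\}.
\end{equation*}
Comparing the cases $i=1$ and $i=2$ (or any two consecutive $i$) immediately forces $[k]_n=0$, contradicting $1\leqslant k < n$.

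There is essentially no obstacle beyond making sure the algebra is carried out correctly; the main point is simply to notice that Theorem \ref{semi} makes the sequence $a_i$ an arithmetic progression in $i$ with common difference $1$, while idempotency demands that $a$ have a fixed point on a sparse arithmetic set of indices, and these two constraints are jointly satisfiable only if $n\mid k$. Hence no idempotent $k$-translatable semigroup of order $n$ exists.
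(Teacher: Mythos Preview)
Your proof is correct and follows essentially the same route as the paper's: invoke left cancellativity (via Lemma \ref{L-idtrans}), apply Theorem \ref{semi} to obtain $a_i=[i-k-ka_k]_n$, combine with the idempotency condition $i=a_{[k-ki+i]_n}$ to deduce $[k(i+a_k)]_n=0$ for all $i$, and observe this is impossible. You are slightly more explicit than the paper in spelling out the final contradiction (comparing two values of $i$ to force $[k]_n=0$), and your detour through $k>1$ is harmless but unnecessary since $1\leqslant k<n$ already excludes $[k]_n=0$.
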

\begin{proof}
An idempotent $k$-translatable semigroup of order $n$ is left cancellative. Thus, by Lemma \ref{L-cond} and Theorem \ref{semi}, in such semigroup for all $i\in Q$ should be
$i=a_{[k-ki+i]_n}=[i-ki-ka_k]_n$, i.e., $[k(i+a_k)]_n=0$, which is impossible.
\end{proof}

Denote by $E(Q)$ the set of all idempotents of a semigroup $Q$.

\begin{lemma}\label{idemp}
If $Q$ is a left cancellative, $k$-translatable semigroup of order $n$, then
\begin{enumerate}
\item[$(1)$] $E(Q)=\{i\in Q:[k(i+a_k)]_n=0\}$,
\item[$(2)$] $E(Q)$ is a subsemigroup of $Q$,
\item[$(3)$] $E(Q)$ equals the set of all left neutral elements of $Q$,
\item[$(4)$] $E(Q)$ is a right-zero semigroup.
\end{enumerate}
\end{lemma}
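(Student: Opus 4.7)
The plan is to use Theorem~\ref{semi} to get a clean closed form for the product, namely $i\cdot j = a_{[k-ki+j]_n} = [j - ki - ka_k]_n$, and then read off all four claims from a single computation. The characterization $a_i = [i-k-ka_k]_n$ together with $[k^2+k]_n = 0$ does essentially all the work.

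For part $(1)$, I compute $i\cdot i = [i - ki - ka_k]_n$, so $i\in E(Q)$ iff $[i - ki - ka_k]_n = i$, which is exactly $[k(i+a_k)]_n = 0$.

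The heart of the argument is part $(3)$, and it will actually imply $(2)$ and $(4)$ almost for free. Suppose $i\in E(Q)$. Then by $(1)$, $[ki]_n = [-ka_k]_n$, and therefore for every $j\in Q$ we have
\begin{equation*}
i\cdot j = [j - ki - ka_k]_n = [j + ka_k - ka_k]_n = j,
\end{equation*}
so $i$ is a left neutral element. Conversely, any left neutral element $i$ satisfies $i\cdot i = i$, hence lies in $E(Q)$. This gives $(3)$.

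Parts $(2)$ and $(4)$ are now immediate consequences. For $(2)$: if $i,j\in E(Q)$ then by the computation above $i\cdot j = j\in E(Q)$, so $E(Q)$ is closed under the operation. For $(4)$: the same identity $i\cdot j = j$ for all $i\in E(Q)$ and all $j\in Q$ specializes to $i\cdot j = j$ for all $i,j\in E(Q)$, which is exactly the right-zero law. I do not anticipate any real obstacle — the only thing to be careful about is consistently applying the formula $a_m = [m-k-ka_k]_n$ (valid for every index $m$) when evaluating $a_{[k-ki+j]_n}$.
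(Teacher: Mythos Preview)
Your proof is correct and uses the same closed form $i\cdot j=[j-ki-ka_k]_n$ as the paper. The only difference is organizational: the paper verifies $(2)$ directly by computing $(i\cdot j)\cdot(i\cdot j)$, whereas you first establish $(3)$ and then get $(2)$ and $(4)$ as immediate corollaries of $i\cdot j=j$; this is a slight streamlining but not a different approach.
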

\begin{proof}
$(1)$: By Lemma \ref{L-cond} and Theorem \ref{semi}, we have

\medskip $i=i\cdot i\Leftrightarrow i=a_{[k-ki+i]_n}=[i-ki-ka_k]_n\Leftrightarrow [k(i+a_k)]_n=0$.

\medskip
$(2)$: If $i,j\in E(Q)$, then $(i\cdot j)\cdot (i\cdot j)=[j-ki-ka_k]_n\cdot [j-ki-ka_k]_n=[j-ki-ka_k-k(i+a_k)-k(j+a_k)]_n=[j-ki-ka_k]_n=i\cdot j$. So, $i\cdot j\in E(Q)$.

$(3)$: For every $i\in E(Q)$ and $j\in Q$ we have $i\cdot j=[j-ki-ka_k]_n=j$.

$(4)$: This follows from $(3)$.
\end{proof}

\begin{corollary}\label{reord}
If $1$ is a left neutral element of a left cancellative, $k$-translatable semigroup $Q$ of order $n$, then $E(Q)=\{j=[i+k(i-1)]_n: i\in Q\}$.
\end{corollary}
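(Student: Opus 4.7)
The plan is to apply Lemma \ref{idemp}(1), reducing the claim to a purely arithmetic statement about which $j$ satisfy $[k(j+a_k)]_n = 0$, and then to check the two inclusions directly with the help of the hypothesis that $1$ is a left neutral element.

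First I would observe that since $1$ is a left neutral element, $a_1 = 1\cdot 1 = 1$. Combined with the formula $a_1 = [1 - k - ka_k]_n$ from Theorem \ref{semi}, this yields $[k(1 + a_k)]_n = 0$, hence $[ka_k]_n = [-k]_n$. Substituting this into the characterization of $E(Q)$ given by Lemma \ref{idemp}(1) gives
$$
E(Q) = \{\, j \in Q : [k(j + a_k)]_n = 0 \,\} = \{\, j \in Q : [k(j-1)]_n = 0 \,\},
$$
which is the working description I aim to match with $\{[i + k(i-1)]_n : i \in Q\}$.

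For the inclusion $\{[i + k(i-1)]_n : i \in Q\} \subseteq E(Q)$, I would set $j := [i + k(i-1)]_n$, so that $j - 1 \equiv (i-1)(k+1) \pmod n$, and compute
$$
[k(j-1)]_n = [(i-1)(k^2 + k)]_n = 0
$$
by the other conclusion of Theorem \ref{semi}, namely $[k^2 + k]_n = 0$. For the reverse inclusion, the key observation is that, given $j \in E(Q)$, the choice $i := j$ already works: since $[k(j-1)]_n = 0$ and $j \in \{1,\ldots,n\}$, we get $[i + k(i-1)]_n = [j + k(j-1)]_n = j$.

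The main (and really only) obstacle is book-keeping with the modular arithmetic; no genuinely new idea is needed. Lemma \ref{idemp}(1) and Theorem \ref{semi} do essentially all the work, and the correct value of $i$ to use in the reverse inclusion — namely $i = j$ itself — falls out at once once the condition $[k(j-1)]_n = 0$ has replaced the original condition $[k(j + a_k)]_n = 0$.
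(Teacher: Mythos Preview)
Your proof is correct and follows the natural route the paper implicitly intends: use Lemma \ref{idemp}(1) together with the relations $[k^2+k]_n=0$ and $a_i=[i-k-ka_k]_n$ from Theorem \ref{semi} to rewrite the idempotency condition as $[k(j-1)]_n=0$, then verify the two inclusions. One small simplification: since $1$ is left neutral you have $a_k=1\cdot k=k$ directly, so $[ka_k]_n=[k^2]_n=[-k]_n$ without passing through $a_1$; but your detour via $a_1=1$ is equally valid.
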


\begin{corollary} Right cancellative semigroups of order $n$ may be $k$-translatable only for $k=n-1$. Such semigroups are isomorphic to the additive groups $\mathbb{Z}_n$.  
\end{corollary}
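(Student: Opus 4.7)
The plan is to reduce the claim to two facts already in hand: Corollary \ref{C-semi}, which handles the case $k=n-1$, and Theorem \ref{semi}, which, combined with the earlier lemma on right cancellable elements, will force $k=n-1$. So the real work is to verify that $Q$ is in fact left cancellative, since Theorem \ref{semi} requires this hypothesis.

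First I would observe that since $Q$ is right cancellative, in particular it contains a right cancellable element. Applying the lemma that says ``if a $k$-translatable groupoid of order $n$ has a right cancellable element, then $(k,n)=1$'', we conclude $(k,n)=1$. Next I would use this coprimality to argue that $Q$ is also left cancellative. The $j$-th column of the multiplication table is $a_j, a_{[j-k]_n}, a_{[j-2k]_n}, \ldots, a_{[j-(n-1)k]_n}$; since $(k,n)=1$, the indices $[j-sk]_n$ for $s=0,1,\ldots,n-1$ run through all of $\{1,2,\ldots,n\}$. By right cancellativity, all entries in the $j$-th column are distinct, so the values $a_1,a_2,\ldots,a_n$ are pairwise distinct. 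This is exactly the condition that the first row contains no repeats, and by Lemma \ref{L-lc} (applied to the element $1$, which is now left cancellable) $Q$ is left cancellative.

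Now that $Q$ is a left cancellative $k$-translatable semigroup, Theorem \ref{semi} applies and gives $[k^2+k]_n=0$, i.e., $k(k+1)\equiv 0\pmod{n}$. Combined with $(k,n)=1$, this forces $[k+1]_n=0$, i.e., $k=n-1$. Finally, Corollary \ref{C-semi} identifies any left cancellative $(n-1)$-translatable semigroup of order $n$ with the additive group $\mathbb{Z}_n$, which completes the proof.

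The only step with any real content is verifying left cancellativity; the rest is a direct application of already-established results. The main (very small) obstacle is the clean deduction that pairwise distinctness of a single column forces pairwise distinctness of $a_1,\ldots,a_n$, which is where the coprimality $(k,n)=1$ is essential.
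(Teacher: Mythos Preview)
Your argument is correct. The derivation of left cancellativity is essentially the same as the paper's, though you make explicit the intermediate step $(k,n)=1$ (via the lemma on right cancellable elements), whereas the paper compresses this: from right cancellativity it notes directly that each column, and hence the first row, consists of distinct entries.

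After left cancellativity is established, however, you take a genuinely different route. The paper observes that a finite associative groupoid which is both left and right cancellative is a quasigroup, hence a group, and then invokes Corollary~\ref{C-cyc} (groups of order $n$ are $k$-translatable only for $k=n-1$, and such groups are cyclic) to finish. You instead appeal to Theorem~\ref{semi} to obtain $[k^2+k]_n=0$, combine this with $(k,n)=1$ to force $k=n-1$, and then apply Corollary~\ref{C-semi}. Both routes are short; the paper's is slightly more structural (it identifies $Q$ as a group before pinning down $k$), while yours is more arithmetic (you pin down $k$ first and only then identify the isomorphism type). Your approach has the minor advantage of not re-using right cancellativity after the first step, relying instead on the already-extracted condition $(k,n)=1$.
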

\begin{proof} 
Let $Q$ be a right cancellative $k$-translatable semigroup of order $n$. Then the right cancellativity implies that each column of the multiplication table of $Q$ consists of $n$ distinct elements. Since $Q$ is $k$-translatable, the first row of its multiplication table has $n$ distinct elements, i.e., $Q$ has a left cancellable element. Thus, by Lemma \ref{L-lc}, it is left cancellative. So, $Q$ is an associative quasigroup. Hence $Q$ is a group. Corollary \ref{C-cyc} ends the proof.
\end{proof}
  
\begin{corollary} A left cancellative, $k$-translatable semigroup of order $n$ with $k$-translatable sequence $a_1,a_2,\ldots,a_n$ in which 
$a_k=1$ or $a_k=n-1$ is isomorphic to the group $\mathbb{Z}_n$ 
\end{corollary}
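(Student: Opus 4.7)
The plan is to show that the hypothesis $a_k=1$ or $a_k=n-1$ forces $k=n-1$, and then to invoke Corollary \ref{C-semi}.

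By Theorem \ref{semi}, since $Q$ is a left cancellative $k$-translatable semigroup, one has the explicit formula
\[
a_i = [i - k - k a_k]_n \quad \text{for all } i=1,2,\ldots,n.
\]
Substituting $i=k$ gives the self-consistency relation $a_k = [-ka_k]_n$, which will be used implicitly via the two assumed values of $a_k$.

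First I would handle the case $a_k = 1$. Then the formula collapses to $a_i = [i-2k]_n$, and in particular $a_k = [k-2k]_n = [-k]_n = n-k$. Combining with $a_k=1$ yields $n-k=1$, i.e.\ $k=n-1$. Next I would handle $a_k = n-1$. Since $k(n-1) \equiv -k \pmod{n}$, the formula becomes $a_i = [i-k+k]_n = i$, so $a_k=k$; comparing with $a_k=n-1$ gives $k=n-1$ again.

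In either case $k=n-1$, so $Q$ is a left cancellative $(n-1)$-translatable semigroup of order $n$, and Corollary \ref{C-semi} identifies it with $\mathbb{Z}_n$. There is no substantial obstacle here — the work is just two short modular arithmetic computations built on Theorem \ref{semi}; the only thing to be careful about is consistently interpreting $[\cdot]_n$ under the paper's convention that $0=n$, so that $[-k]_n = n-k$ really is the element labelled $n-k$ in $\{1,\ldots,n\}$.
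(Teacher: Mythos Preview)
Your proof is correct and is essentially the same as the paper's: both use Theorem~\ref{semi} to obtain the relation $a_k=[-ka_k]_n$, deduce $k=n-1$ in each of the two cases by a one-line modular computation, and then invoke Corollary~\ref{C-semi}. The only cosmetic difference is that the paper appeals directly to $a_k=[-ka_k]_n$ rather than first writing out the full formula $a_i=[i-k-ka_k]_n$.
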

\begin{proof}
If $a_k=1$ then by Theorem \ref{semi}, $a_k=1=[-ka_k]_n=[-k]_n$. Hence $k=n-1$. If $a_k=-1$ then by Theorem \ref{semi} again, $a_k=n-1=[-ka_k]_n=k$. So, in both these cases the results follows from Corollary \ref{C-semi}.
\end{proof}

\begin{theorem}\label{decomp}
A left cancellative, $k$-translatable semigroup of order $n$ is a union of $t$ disjoint copies of cyclic groups of order $m$, where $m$ and $t$ are the smallest positive integers such that $[mk]_n=0$ and $[tm]_n=0$.
\end{theorem}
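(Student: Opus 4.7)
The plan is to first invoke Theorem~\ref{T-reord} and assume, by re-ordering, that $Q$ is left unitary with left neutral element $1$. Then Lemma~\ref{L-basic}$(i)$ gives the explicit formula $i\cdot j=[k-ki+j]_n$, and Corollary~\ref{C-unit} supplies the key congruence $[k^2+k]_n=0$, i.e.\ $k^2\equiv -k\pmod n$. Because $m$ is by definition the additive order of $k$ in $\mathbb{Z}_n$, the cyclic subgroup $\langle k\rangle\subseteq\mathbb{Z}_n$ has order exactly $m$ and index $n/m=t$, so $\mathbb{Z}_n$ is the disjoint union of $t$ cosets of $\langle k\rangle$, each of size $m$. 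These cosets are my candidate cyclic components.

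The first step is to show that every such coset $C=c+\langle k\rangle$ is closed under $\cdot$. For $x,y\in C$ one computes
\[
x\cdot y-y=[k-kx]_n=k(1-x)\in\langle k\rangle,
\]
so $x\cdot y\in y+\langle k\rangle=C$, and closure is immediate from the explicit formula.

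Next, I would parameterize $C$ by $x=c+ak$ with $a\in\{0,1,\ldots,m-1\}$ and use $k^2\equiv -k\pmod n$ to expand
\[
(c+ak)\cdot(c+bk)=c+\bigl[(1-c)+a+b\bigr]k\pmod n .
\]
In terms of the parameter $a$ the operation becomes $a\ast b=(1-c)+a+b\pmod m$, which is a shifted addition and hence a cyclic group operation on $\mathbb{Z}_m$. Its identity corresponds to the parameter $a=c-1$, i.e.\ to the element $e=c+(c-1)k\in C$; the calculation $k(e-1)=(c-1)(k+k^2)\equiv 0$ reconfirms, via Lemma~\ref{idemp}, that $e$ is genuinely an idempotent of $Q$. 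So $(C,\cdot)$ is a cyclic group of order $m$, and since the $t$ cosets of $\langle k\rangle$ partition $Q$, the semigroup $Q$ is a disjoint union of $t$ cyclic groups of order $m$, as required.

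The main technical step is the closed-form computation of the product on each coset: it rests squarely on the associativity-equivalent congruence $k^2\equiv -k\pmod n$, and once the formula above is in hand, identifying the cyclic group structure on $C$ and counting cosets is routine. Everything else is bookkeeping with the two defining congruences $mk\equiv 0$ and $tm\equiv 0\pmod n$.
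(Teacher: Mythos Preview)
Your argument is correct, and it reaches the same decomposition as the paper but by a genuinely different route. The paper never re-orders: for each idempotent $i\in E(Q)$ it forms $Q_i=Q\cdot i$, checks directly that $Q_i$ is a commutative group with neutral element $i$ (exhibiting inverses as $[-x-2a_k]_n$), proves the $Q_i$ are pairwise disjoint via left cancellation, and only then identifies the order of $Q_1$ and the number of idempotents with $m$ and $t$. You instead pass immediately to the left unitary model via Theorem~\ref{T-reord}, recognise the components as the additive cosets of $\langle k\rangle$ in $\mathbb{Z}_n$, and read off closure, the cyclic group law, and the idempotent in each coset from the single computation $(c+ak)\cdot(c+bk)=c+[(1-c)+a+b]k$. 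Your approach is more explicit and makes the equalities $m=|\langle k\rangle|$ and $t=[\,\mathbb{Z}_n:\langle k\rangle\,]$ transparent from the outset; the paper's approach is more intrinsically semigroup-theoretic, organising everything around $E(Q)$ without ever invoking the ambient additive structure of $\mathbb{Z}_n$, and in particular it shows that the components are exactly the sets $Q\cdot i$, a fact your argument obtains only implicitly.
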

\begin{proof}
For every $i\in E(Q)$ consider the set $Q_i=Q\cdot i=\{x\cdot i:x\in Q\}$. Then, by associativity and Lemma \ref{idemp}, $(x\cdot i)\cdot (y\cdot i)=x\cdot (y\cdot i)=x\cdot [i-ky-ka_k]_n=[i-ky-kx-2ka_k]_n=y\cdot (x\cdot i)=(y\cdot i)\cdot (x\cdot i)$. So, $Q_i$ is a commutative semigroup contained in $Q$ and $i=i\cdot i$ is its neutral element. Since $(x\cdot i)\cdot (y\cdot i)=i$ for $y=[-x-2a_k]_n$, this semigroup is a commutative group. Moreover, for every $i\in E(Q)$, the mapping $\varphi(x\cdot i)=x\cdot 1$ is an isomorphism between $Q_i$ and $Q_1$. So, all groups $Q_i$ are isomorphic to $Q_1$. 
Since $Q$ is $k$-translatable, the group $Q_1$ contains elements of the form $(t+1)\cdot 1=[1-kt]_n$, $t=0,1,\ldots,m-1$, where $m$ is the smallest positive integer such that $[mk]_n=0$. It is not difficult to see that this group is generated by the element $2\cdot 1=[1-k]_n$.

Groups $Q_i$ are pairwise disjoint. If not, then for $z\in Q_i\cap Q_j$ we have $z=x\cdot i=y\cdot j$ for some $x,y\in Q$. Whence, multiplying by $i$ and $j$, and applying Lemma \ref{idemp}, we obtain $z\cdot i=x\cdot i=y\cdot i$ and $z\cdot j=x\cdot j=y\cdot j$. Thus $x\cdot i=y\cdot j=x\cdot j$, which by left cancellativity gives $i=j$. Hence $Q_i=Q_j$.

Obviously $\bigcup Q_i\subseteq Q$, $i\in E(Q)$. To prove the converse inclusion observe that, according to Theorem \ref{T-reord}, in $Q$ we can introduce a new ordering such that some element of $E(Q)$ can be identified with $1$. Then, by Corollary \ref{reord}, $[q+kq-k]_n\in E(Q)$ for each $q\in Q$. Thus, $q\cdot i=q$ for $i=[q+kq-k]_n$. Hence $Q\subseteq \bigcup Q_i$. So, $Q$ is a union of $t$ copies of $Q_1$, where $t$ is the number of idempotents of $Q$. Since $(1+wm)\cdot j=k-k(1+wm)+j=j$, the set $L=\{1+wm: w=1,2,\ldots\}$ consists of left identities. Since $t$ is the smallest positive integer such that $[tm]_n=0$, $L=\{1+wm: w=0,1,\ldots,t-1\}$. By Lemma \ref{idemp}, $L=E(Q)$.
\end{proof}

Note that $Q_i\cdot Q_j\subseteq Q_j$ for $i,j\in E(Q)$.

\begin{corollary}\label{ideals}
A left cancellative, $k$-translatable semigroup of order $n$ is a union of $t$ disjoint pairwise isomorphic left ideals of order $m$, where $m$ and $t$ are the smallest positive integers such that $[mk]_n=0$ and $[tm]_n=0$.
\end{corollary}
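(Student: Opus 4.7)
The plan is to observe that Corollary \ref{ideals} is an immediate repackaging of Theorem \ref{decomp} together with the short remark $Q_i\cdot Q_j\subseteq Q_j$ stated just before it. Theorem \ref{decomp} already provides the decomposition $Q=\bigcup_{i\in E(Q)} Q_i$ into $t$ pairwise disjoint cyclic groups of order $m$, with $m$ and $t$ the minimal positive integers satisfying $[mk]_n=0$ and $[tm]_n=0$. The only items left to justify are therefore (a) that each $Q_i$ is a left ideal of $Q$, and (b) that the $Q_i$ are pairwise isomorphic.

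For (a) I would first confirm the displayed remark. Given $x,y\in Q$ and $i,j\in E(Q)$, associativity yields $(x\cdot i)\cdot(y\cdot j)=\bigl(x\cdot(i\cdot y)\bigr)\cdot j\in Q\cdot j=Q_j$, so $Q_i\cdot Q_j\subseteq Q_j$. Summing over $i\in E(Q)$ and using $Q=\bigcup_{i\in E(Q)} Q_i$ gives $Q\cdot Q_j=\bigcup_i (Q_i\cdot Q_j)\subseteq Q_j$, which is the left-ideal property.

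For (b) I would reuse the map constructed inside the proof of Theorem \ref{decomp}, namely $\varphi_i\colon Q_i\to Q_1$ defined by $x\cdot i\mapsto x\cdot 1$, which is already asserted there to be an isomorphism. Composing $\varphi_j^{-1}\circ\varphi_i$ then produces an isomorphism $Q_i\to Q_j$ for any pair of idempotents $i,j\in E(Q)$, giving the pairwise-isomorphism statement. Since each $Q_i$ is a cyclic group of order $m$ (from Theorem \ref{decomp}), the orders of the left ideals are as claimed.

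I anticipate no real obstacle; the corollary is essentially a relabelling of Theorem \ref{decomp} supplemented by a one-line associativity calculation. The only mildly delicate point is confirming that $\varphi_i$ is well defined, i.e.\ that $x\cdot i=y\cdot i$ forces $x\cdot 1=y\cdot 1$; this is not spelled out inside Theorem \ref{decomp} but is straightforward once one parametrises $Q_i$ by the $m$ distinct elements $(t+1)\cdot i=[i-kt-ka_k]_n$ for $t=0,1,\ldots,m-1$ produced in that proof, after which the correspondence $(t+1)\cdot i\leftrightarrow(t+1)\cdot 1$ is manifestly a bijection and a group homomorphism.
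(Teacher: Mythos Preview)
Your proposal is correct and matches the paper's intended route: the paper gives no separate proof of Corollary \ref{ideals}, treating it as immediate from Theorem \ref{decomp} together with the one-line remark $Q_i\cdot Q_j\subseteq Q_j$, and you have accurately identified and filled in precisely these ingredients. The only minor simplification is that the left-ideal property follows even more directly without decomposing $Q$: for any $z\in Q$ and any $y\cdot j\in Q_j$, associativity gives $z\cdot(y\cdot j)=(z\cdot y)\cdot j\in Q_j$.
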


\begin{corollary}\label{c-decomp}
A left cancellative, $k$-translatable semigroup of order $n=k^2+k$ is a union of $k$ disjoint copies of cyclic groups isomorphic to $\mathbb{Z}_{k+1}$.
\end{corollary}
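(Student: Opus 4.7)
The plan is to specialize Theorem \ref{decomp} to the case $n = k^2+k = k(k+1)$, so the whole task reduces to computing the two integers $m$ and $t$ given by that theorem. Recall $m$ is defined as the least positive integer with $[mk]_n = 0$ and $t$ as the least positive integer with $[tm]_n = 0$; Theorem \ref{decomp} then gives $Q$ as a disjoint union of $t$ cyclic groups of order $m$.

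First I would compute $m$. The condition $mk \equiv 0 \pmod{k(k+1)}$ is equivalent to $(k+1) \mid m$, since we may cancel $k$ from $mk$ and $k(k+1)$. Hence the least positive such $m$ is $m = k+1$. Next I would compute $t$ from $[t(k+1)]_n = 0$, i.e.\ $t(k+1) \equiv 0 \pmod{k(k+1)}$, which (cancelling $k+1$) is equivalent to $k \mid t$. The least positive such $t$ is therefore $t = k$.

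Plugging $m = k+1$ and $t = k$ into Theorem \ref{decomp} yields that $Q$ is a disjoint union of $k$ cyclic groups of order $k+1$. Since there is, up to isomorphism, only one cyclic group of a given order, each summand is isomorphic to $\mathbb{Z}_{k+1}$, which gives exactly the statement of the corollary.

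There is no genuine obstacle here: the two divisibility computations are elementary, and the structural statement is carried entirely by Theorem \ref{decomp}. The only point to check carefully is the direction of the cancellation $mk \equiv 0 \pmod{k(k+1)} \Longleftrightarrow m \equiv 0 \pmod{k+1}$, where one uses $\gcd(k,k+1)=1$; the analogous cancellation for the computation of $t$ is immediate.
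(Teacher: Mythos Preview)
Your proposal is correct and is exactly the intended argument: the paper states this corollary without proof, as an immediate specialization of Theorem \ref{decomp}, and your computation of $m=k+1$ and $t=k$ is precisely what is needed. One small remark: the cancellation $k(k+1)\mid mk \Longleftrightarrow (k+1)\mid m$ does not actually require $\gcd(k,k+1)=1$; it follows directly by dividing the divisibility relation through by $k$, so that aside is harmless but unnecessary.
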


\begin{example}\rm
A left unitary $2$-translatable semigroup of order $6$ has two idempotents $1$ and $4$ and is a union of two cyclic groups isomorphic to the additive $\mathbb{Z}_3$. On the other hand, a
left unitary $3$-translatable semigroup of order $6$ has three idempotents $1,3$ and $5$ and is a union of three cyclic groups of order two.
\end{example}

\begin{theorem}
In left cancellative $k$-translatable semigroup every left and every right ideal is semiprime.
\end{theorem}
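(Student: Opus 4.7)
The key is to use the structural decomposition from Theorem \ref{decomp}: a left cancellative $k$-translatable semigroup $Q$ of order $n$ is a disjoint union $Q=\bigcup_{i\in E(Q)} Q_i$ where each $Q_i=Q\cdot i$ is a cyclic (in particular, abelian) group with identity $i$. The proof plan is to reduce the semiprimeness condition to the elementary group-theoretic identity $x=x^{-1}\cdot x^2 = x^2 \cdot x^{-1}$.

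First I would recall the definition: a left (resp.\ right) ideal $I$ of $Q$ is \emph{semiprime} if $x\cdot x\in I$ implies $x\in I$. So fix a left ideal $I$ and an element $x\in Q$ with $x^2\in I$; the goal is $x\in I$. By the decomposition, $x$ belongs to a unique group $Q_i$ (where $i\in E(Q)$ is the left identity of $Q_i$). Since $Q_i$ is a group contained in $Q$, the inverse $x^{-1}\in Q_i\subseteq Q$ exists and satisfies $x\cdot x^{-1}=x^{-1}\cdot x=i$. Also $x^2\in Q_i$ by closure.

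Now compute inside $Q_i$: $x^{-1}\cdot x^2 = (x^{-1}\cdot x)\cdot x = i\cdot x = x$, using associativity of $Q$ and the fact that $i$ is the identity of $Q_i$. Therefore $x = x^{-1}\cdot x^2 \in Q\cdot I \subseteq I$, since $I$ is a left ideal and $x^2\in I$. This proves the left ideal case. For a right ideal $I$, the symmetric identity $x = x^2 \cdot x^{-1} = x\cdot(x\cdot x^{-1}) = x\cdot i = x$ (again valid inside the group $Q_i$) yields $x\in I\cdot Q\subseteq I$.

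The only subtlety I anticipate is being careful that $i$ really is a two-sided identity for elements of $Q_i$ (it is \emph{only} a left identity in the ambient semigroup $Q$, by Lemma \ref{idemp}(3)), but this is guaranteed because Theorem \ref{decomp} shows that $Q_i$ itself is a group with neutral element $i$. Once that point is observed, the entire argument is one line per case, and no further computation involving $k$, $n$, or the translatable sequence is needed.
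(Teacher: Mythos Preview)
Your proof is correct and follows the same strategic line as the paper: both rely on Theorem~\ref{decomp} to view $Q$ as a union of (cyclic) groups, and then deduce semiprimeness of one-sided ideals from that structure. The only difference is presentational --- the paper simply cites Theorem~4.3 of Clifford--Preston \cite{Cliff} (the relevant direction of which asserts that every one-sided ideal in a union of groups is semiprime), whereas you spell out that implication directly via the inverse computation $x = x^{-1}\cdot x^2 = x^2\cdot x^{-1}$ inside the group component $Q_i$ containing $x$.
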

\begin{proof}
It is a consequence of our Theorem \ref{decomp} and Theorem 4.3 from \cite{Cliff}.
\end{proof}

Left cancellative $k$-translatable semigroups belong to many important classes of semigroups. Below we present a short survey of such classes. For definitions see for example \cite{Cliff}

\begin{theorem}
A left cancellative $k$-translatable semigroup of order $n$ is
\begin{enumerate}
\item[$(1)$] medial,
\item[$(2)$] anticommutative $($i.e., nowhere commutative$)$, if $(1+k,n)=1$,
\item[$(3)$] conditionally commutative,
\item[$(4)$] left commutative,
\item[$(5)$] left and right regular, 
\item[$(6)$] regular, 
\item[$(7)$] intra-regular,
\item[$(8)$] orthodox,
\item[$(9)$] a Clifford right semigroup, 
\item[$(10)$] a Clifford left semigroup, if $(k,n)=1$.
\end{enumerate}

\end{theorem}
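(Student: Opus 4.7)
The plan is to invoke Theorem \ref{T-reord} and Corollary \ref{C-izo} to reduce, without loss of generality, to the left unitary normal form $i\cdot j=[k-ki+j]_n$, in which (by Corollary \ref{C-unit} / Theorem \ref{semi}) one has $[k^2+k]_n=0$, equivalently $[k^2]_n=[-k]_n$. Under this normalization every claim becomes either a linear congruence computation modulo $n$ or an immediate consequence of the structural Theorem \ref{decomp} together with Lemma \ref{idemp}.

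For the commutativity-flavoured items (1)--(4) I work in the normal form. Part (1) is already Theorem \ref{med}. For (2), the identity $xy=yx$ rearranges to $(1+k)(y-x)\equiv 0\pmod n$, so the hypothesis $(1+k,n)=1$ forces $y=x$. For (4), after reducing by $k^2\equiv -k$ the product $xyz$ takes the form $[2k-kx-ky+z]_n$, which is symmetric in $x,y$, so $xyz=yxz$ follows directly. For (3), the same reduction gives $xzy\equiv 2k-kx-kz+y$ and $yzx\equiv 2k-ky-kz+x$, whose difference is $(1+k)(y-x)$; under the hypothesis $xy=yx$ this vanishes, so $xzy=yzx$.

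For (5)--(8) I compute $x^2=[k+(1-k)x]_n$ and then, again using $k^2\equiv -k$, the expressions $yx^2=[2k-ky+(1-k)x]_n$, $x^2y=[2k-2kx+y]_n$, and $xyx=[2k+x-kx-ky]_n$. The choice $y=[2-x]_n$ simultaneously solves $yx^2=x$ and $xyx=x$, giving left regularity and regularity, while $y=[x+2k(x-1)]_n$ solves $x^2y=x$, giving right regularity; this settles (5) and (6). For (7), the cleanest route is via Theorem \ref{decomp}: $Q$ is a union of cyclic groups, so for $a\in Q_i$ with group identity $e_i$ and group-inverse $a^{-1}\in Q_i$, the identity $a=a^{-1}\cdot a^2\cdot e_i$ exhibits $a\in Qa^2Q$. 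Orthodoxy (8) is then (6) together with Lemma \ref{idemp}(2), which states that $E(Q)$ is a subsemigroup.

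Finally, for (9) and (10), I plan to identify $Q$ structurally. The map $b\mapsto a\cdot b=[k-ka+b]_n$ is a bijection on $Q$ for every $a$, so $aQ=Q$; hence $Q$ is right simple, and being also left cancellative it is a right group, which decomposes as the direct product of the cyclic group $Q_1$ and the right-zero band $E(Q)$ (matching Theorem \ref{decomp} and Lemma \ref{idemp}(4)); this is (9). For (10), when $(k,n)=1$ the parameters of Theorem \ref{decomp} collapse to $m=n$ and $t=1$, so $Q$ is a single cyclic group, in particular a Clifford semigroup. The main obstacle I anticipate is not computational but terminological: the paper cites \cite{Cliff} for the precise definitions of \emph{conditionally commutative}, \emph{Clifford right}, and \emph{Clifford left semigroup}, and a fully rigorous treatment of (3), (9), (10) requires matching those exact conventions rather than the tentative ones used above.
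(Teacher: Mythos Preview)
Your argument is correct, and in outline it matches the paper's: normalize via Theorem~\ref{T-reord}/Theorem~\ref{semi} so that products become affine expressions modulo $n$, then read off each item as a congruence identity. The paper in fact keeps the constant $ka_k$ around (using $i\cdot j=[j-ki-ka_k]_n$) rather than passing all the way to the left unitary model, but this is cosmetic.

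The genuine, if minor, divergences are in items (4), (7), (9), (10). For (4) the paper does not compute: it observes that mediality (item (1)) plus the existence of a left neutral element $e$ gives $i\cdot j\cdot x=(e\cdot i)\cdot(j\cdot x)=(e\cdot j)\cdot(i\cdot x)=j\cdot i\cdot x$. For (7) the paper gives an explicit witness $i=x\cdot i\cdot i\cdot y$ with $x=[-2i-3a_k]_n$ and $y=i$, rather than appealing to Theorem~\ref{decomp}. For (9) and (10) your terminological worry is resolved by the proof the paper actually gives: ``Clifford right'' is taken to mean $Q\cdot i\subseteq i\cdot Q$ for all $i$, and ``Clifford left'' to mean $i\cdot Q\subseteq Q\cdot i$ for all $i$. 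The paper verifies these inclusions pointwise ($j\cdot i=i\cdot s$ with $s=[i+k(i-j)]_n$; and $i\cdot j=s\cdot i$ with $s=[t(i-j)+i]_n$ where $[tk]_n=1$, which needs $(k,n)=1$). Your route---$aQ=Q$ from bijectivity of left translations, hence $Qi\subseteq iQ$; and $(k,n)=1\Rightarrow Q$ is a group via Theorem~\ref{decomp}---is a slightly higher-level structural argument that yields the same inclusions, so it is a valid alternative once you adopt the paper's definitions.
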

\begin{proof}
$(1)$: This is a consequence of Theorem \ref{med}. 

$(2)$: Indeed, $i\cdot j=j\cdot i$ means that $[j-ki-ka_k]_n=[i-kj-ka_k]_n$, i.e., $[(j-i)(1+k)]_n=0$, which for $(1+k,n)=1$ gives $i=j$.

$(3)$: It is easy to see that $i\cdot j=j\cdot i$ implies $i\cdot x\cdot j=j\cdot x\cdot i$ for all $x\in Q$. 

$(4)$: Since $Q$ has a left neutral element, $(1)$ implies $i\cdot j\cdot x=j\cdot i\cdot x$ for all $i,j,x\in Q$.

$(5)$: $x\cdot (j\cdot j)=j$ and $(j\cdot j)\cdot y=j$ for each $j\in Q$ and $x=[-j-2ka_k]_n$, $y=[j+2kj+2ka_k]_n$.

$(6)$: $i=i\cdot x\cdot i$ for every $i\in Q$ and $x=[-i-2a_k]_n$. 

$(7)$: $i=x\cdot i\cdot i\cdot y$ for every $i\in Q$ and $x=[-2i-3a_k]_n$, $y=i$. 

$(8)$: It is a consequence of Lemma \ref{idemp} and $(6)$.

$(9)$: $Q\cdot i\subseteq i\cdot Q$ for every $i\in Q$, because $j\cdot i=i\cdot s$ for $s=[i+k(i-j)]_n$.  

$(10)$: Analogously, $i\cdot Q\subseteq Q\cdot i$ for every $i\in Q$, because $i\cdot j=s\cdot i$ for $s=[t(i-j)+i]_n$, where $[tk]_n=1$. Such $t$ exists only for $(k,n)=1$.
\end{proof}

\begin{theorem}
A paramedial left cancellative $k$-translatable semigroup is a cyclic group.
\end{theorem}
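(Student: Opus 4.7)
The plan is to reduce to the left unitary case and then apply the arithmetic characterizations of paramediality and associativity from the previous sections. Paramediality is a purely algebraic identity, so it is preserved under any re-ordering (and isomorphism). Therefore, by Theorem \ref{T-reord}, I may assume without loss of generality that $Q$ is a left unitary $k$-translatable semigroup with $1$ as its left neutral element, and it remains paramedial.

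Next, I would apply Theorem \ref{para}$(f)$: a left unitary $k$-translatable groupoid is paramedial if and only if $[k^2]_n = 1$. Simultaneously, since $Q$ is a semigroup, Corollary \ref{C-unit} gives $[k^2+k]_n = 0$. Subtracting these two relations modulo $n$ yields $[k-1]_n = [-1]_n$, i.e., $[k+1]_n = 0$, so $k = n-1$.

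Finally, once $k = n-1$ is established, I would invoke Corollary \ref{C-semi}, which states that a left cancellative $(n-1)$-translatable semigroup of order $n$ is isomorphic to the additive group $\mathbb{Z}_n$. Being isomorphic to $\mathbb{Z}_n$, the semigroup $Q$ is a cyclic group, completing the proof.

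There is essentially no obstacle here: the result falls out immediately once the two arithmetic conditions $[k^2]_n=1$ (from paramediality) and $[k+k^2]_n=0$ (from associativity plus left unitarity) are combined. The only point worth being careful about is verifying that the passage to the left unitary form via Theorem \ref{T-reord} does not destroy paramediality, but since paramediality is an identity in the operation alone, this is immediate.
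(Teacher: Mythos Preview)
Your proof is correct and follows essentially the same route as the paper: both combine $[k^2]_n=1$ (from paramediality, via Theorem~\ref{para}) with $[k^2+k]_n=0$ (from associativity) to force $k=n-1$, and then invoke the earlier characterization of left cancellative $(n-1)$-translatable semigroups as cyclic groups. Note a small arithmetic slip---subtracting the two congruences gives $[k]_n=[-1]_n$, not $[k-1]_n=[-1]_n$---though your stated conclusion $[k+1]_n=0$ is correct; you are in fact more careful than the paper in explicitly justifying the reduction to the left unitary case (via Theorem~\ref{T-reord}) before invoking Theorem~\ref{para}, which is only stated for left unitary groupoids.
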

\begin{proof}
Comparing Theorems \ref{para} and \ref{semi} we can see that such semigroup is $(n-1)$-translatable and $a_{[i+1]_n}=[(i+1)+1+a_k]_n=[a_i+1]_n$. Corollary \ref{cyclic} completes the proof.
\end{proof}

Below we present several results on $k$-translatable semigroups which are not left cancellative.

\begin{theorem}\label{Tsem}
If the first row of the multiplication table of a naturally ordered $k$-translatable groupoid $Q=\{1,2,\ldots,n\}$ has the form $a_1,a_2,\ldots,a_n$, then $Q$ is a semigroup with the multiplication $i\cdot j=a_j$ if and only if $a_s=a_{[s+k]_n}=a_{a_s}$ for any $s\in Q$. 
\end{theorem}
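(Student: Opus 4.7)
The proof should be a direct unpacking of the $k$-translatability formula from Lemma \ref{L-basic}$(i)$, namely $i\cdot j=a_{[k-ki+j]_n}$, which automatically holds here. The conclusion packages two independent assertions: (1) the product $i\cdot j$ depends only on $j$ (and is equal to $a_j$), and (2) the resulting operation is associative. My plan is to isolate each of these and match them against the two conditions on the sequence.

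For the first part, I would show that $i\cdot j=a_j$ for all $i,j\in Q$ is equivalent to $a_s=a_{[s+k]_n}$ for all $s\in Q$. Using Lemma \ref{L-basic}$(i)$, the requirement $i\cdot j=a_j$ becomes $a_{[k-ki+j]_n}=a_j$ for every $i,j$. Specializing to $i=2$ gives $a_{[j-k]_n}=a_j$, and after the substitution $s=[j-k]_n$ this is exactly $a_s=a_{[s+k]_n}$. For the converse, a short induction on $i$ (shifting by $k$ each step) propagates $a_s=a_{[s+k]_n}$ to $a_{[j-(i-1)k]_n}=a_j$ for every $i$, which is precisely $a_{[k-ki+j]_n}=a_j$.

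For the second part, assume (as we may, by the first step) that $i\cdot j=a_j$. Then
\begin{equation*}
(i\cdot j)\cdot l=a_j\cdot l=a_l,\qquad i\cdot(j\cdot l)=i\cdot a_l=a_{a_l},
\end{equation*}
so associativity of $Q$ is equivalent to $a_l=a_{a_l}$ for every $l\in Q$, i.e.\ to the second condition $a_s=a_{a_s}$.

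Combining the two equivalences gives the theorem. There is no real obstacle: the whole argument is a mechanical application of Lemma \ref{L-basic}$(i)$, and the only point needing care is recording that the condition $a_s=a_{[s+k]_n}$ already forces $i\cdot j$ to be independent of $i$ (so that the expression $a_{a_l}$ in the associativity check is unambiguous).
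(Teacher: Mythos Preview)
Your proof is correct and takes essentially the same approach as the paper, via Lemma~\ref{L-basic}$(i)$ and a direct associativity check. The only organisational difference is in the $(\Leftarrow)$ direction: you first use $a_s=a_{[s+k]_n}$ to reduce the product to $i\cdot j=a_j$ and then verify associativity in that simplified form via $a_s=a_{a_s}$, whereas the paper plugs both conditions simultaneously into the general associativity identity~\eqref{eas} and only afterwards reads off $i\cdot j=a_j$ from Lemma~\ref{L-basic}$(i)$.
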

\begin{proof}
$(\Rightarrow)$: Indeed, $(i\cdot j)\cdot s=a_s$ and $i\cdot (j\cdot s)=a_{a_s}$ imply $a_s=a_{a_s}$ for all $s\in Q$.
From $k$-translability we obtain $a_s=a_{[s+k]_n}$.

$(\Leftarrow)$:
First note that  $a_s=a_{[s+k]_n}$ implies $a_{[s-k]_n}=a_{[s-k+k]_n}=a_s$. Thus, for any, $i,j,s\in Q$ we have
$$
a_{[k-ka_{[k-ki+j]_n}+s]_n}=a_{[k+s]_n}=a_s=a_{a_s}=a_{a_{[k-ki+s]_n}}=a_{[k-ki+a_{[k-kj+s]_n}]_n},
$$
which, by \eqref{eas}, means that $Q$ is a semigroup. The rest is a consequence of Lemma \ref{L-basic} $(i)$.
\end{proof}
  
Note by the way that in such defined semigroup the multiplication takes no more than $d=(k,n)$ values. So, in the case $(k,n)=1$ it is semigroup with  multiplication having one constant value.

Theorem \ref{Tsem} together with Lemma \ref{L-3} give the possibility to construct $k$-translatable semigroups which are not left cancellative.

\begin{example} Let $Q=\{1,2,\ldots,n\}$ be naturally ordered set. Then
\begin{enumerate}
\item[$(1)$] the sequences $(12,12,9,10,9,12,12,12,9,10,9,12)$, $(1,1,1,10,11,10,1,1,1,10,11,10)$ and $(5,8,8,8,5,6,5,8,8,8,5,6)$ define three isomorphic $6$-translatable semigroups of order $n=12$,

\item[$(2)$] the sequences $(1,3,3,1,3,3)$, $(2,2,6,2,2,6)$ and $(1,5,1,1,5,1)$ yield isomorphic $3$-translatable semigroups of order $6$.

\item[$(3)$] the sequences $(1,5,4,4,5,1,5,4,4,5)$, $(4,3,3,4,10,4,3,3,4,10)$ and 

$(1,2,8,2,1,1,2,8,2,1)$ define isomorphic $5$-translatable semigroups of order $n=10$.
\end{enumerate}
\end{example}

\begin{corollary}
A naturally ordered semigroup $Q=\{1,2,\ldots,n\}$ with a $k$-trans\-latable sequence $a_1,a_2,\ldots,a_n$ in which 
\begin{enumerate}
\item[$(1)$] $a_1=a_2=n$ or 
\item[$(2)$] $a_1=1$ and $a_j=j+1$ for some $j\ne 1$ 
\end{enumerate}
has multiplication defined by the formula $i\cdot j=a_j$. 
\end{corollary}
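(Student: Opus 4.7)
My plan is as follows. By Lemma \ref{L-basic}$(i)$, the multiplication in $Q$ is forced to be $i\cdot j=a_{[k(1-i)+j]_n}$, so the conclusion $i\cdot j=a_j$ is equivalent to showing that the sequence $(a_s)$ is $k$-periodic, i.e., $a_{[s+k]_n}=a_s$ for every $s\in Q$. I will extract this periodicity from the associativity identity \eqref{eas} by making two well-chosen substitutions for $(x,y,z)$, one of which exploits each hypothesis.

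The workhorse is the $x=y=1$ specialisation of \eqref{eas}: after simplifying the inner indices it reads $a_{[z+k-ka_1]_n}=a_{a_z}$ for all $z\in Q$. For case (2), where $a_1=1$, this immediately becomes $a_z=a_{a_z}$. Feeding this back into \eqref{eas} with $x=1$ and $y$ arbitrary collapses the right-hand side to $a_{[k-ky+z]_n}$, and comparison with the left-hand side then yields $a_{[s+k(y-a_y)]_n}=a_s$ for every $s\in Q$. The existence of some $j\neq 1$ with $a_j=j+1$ supplies a value of $y$, namely $y=j$, for which $y-a_y\equiv -1\pmod n$, so we obtain $a_{[s-k]_n}=a_s$, which is the required periodicity.

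For case (1), where $a_1=a_2=n$, the $x=y=1$ identity becomes $a_{[z+k]_n}=a_{a_z}$, since $ka_1\equiv 0\pmod n$. Running the same calculation with $x=1$, $y=2$ (using $a_2=n$ once more) gives $a_{[z+k]_n}=a_{a_{[z-k]_n}}$. Shifting $z\mapsto [z-k]_n$ in the first relation yields $a_z=a_{a_{[z-k]_n}}$, and combining this with the second identity delivers $a_{[z+k]_n}=a_z$, again the desired periodicity.

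The only real obstacle I foresee is the arithmetic bookkeeping of indices modulo $n$; the two hypotheses look quite different, but in each case the pair of substitutions in \eqref{eas} is essentially forced by the aim of making one side collapse to $a_{[z\pm k]_n}$ (case (1): arrange $ka_1\equiv 0$; case (2): use $a_1=1$ to promote $a_{a_z}=a_z$). Once the periodicity $a_{[s+k]_n}=a_s$ is secured, the conclusion $i\cdot j=a_j$ follows at once.
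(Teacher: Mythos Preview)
Your proof is correct and follows essentially the same route as the paper's. In both cases you and the paper specialise the associativity identity \eqref{eas} at $x=1$ with $y=1$ and then with a second value of $y$ (namely $y=2$ in case~(1), $y=j$ in case~(2)) to force the periodicity $a_{[s+k]_n}=a_s$; the only cosmetic difference is that the paper finishes by invoking Theorem~\ref{Tsem}, whereas you read off $i\cdot j=a_{[k(1-i)+j]_n}=a_j$ directly from the periodicity.
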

\begin{proof}
Let $Q$ be a $k$-translatable semigroup. Then, by \eqref{eas}, for $x=1$, $y=j$ and $z=s$ we have
\begin{equation}\label{tsem}
a_{[k-ka_j+s]_n}=a_{a_{[k-kj+s]_n}},
\end{equation}
which for $j=1$ gives $a_{[k-ka_1+s]_n}=a_{a_s}$. From this, for $a_1=n$, we obtain $a_{[s+k]_n}=a_{a_s}$, whence we get $a_{s}=a_{a_{[s-k]_n}}$.
Thus, 
$$
a_s=a_{a_{[s-k]_n}}=a_{a_{[k-k2+s]_n}}\stackrel{\eqref{tsem}}{=}a_{[k-ka_2+s]_n}=a_{[k+s]_n}
$$
for $a_2=n$. So, for $a_1=a_2=n$ the conditions of Theorem \ref{Tsem} are satisfied. Hence \ $i\cdot j=a_j$ for all $i,j\in Q$.

Now if $a_1=1$ and $a_j=j+1$ for some $j\ne 1$, then $1\cdot 1=a_1=1$ and $a_s=1\cdot s=(1\cdot 1)\cdot s=1\cdot (1\cdot s)=1\cdot a_s=a_{a_s}$. This together with \eqref{tsem} implies $a_{[k-kj+s]_n}=a_{a_{[k-kj+s]_n}}=a_{[k-ka_j+s]_n}$, which for $s=[ka_j-k+t]_n$ gives $a_{[k(a_j-j)+s]_n}=a_t$. But $a_j-j=1$ for some $j\ne 1$, so $a_{[k+t]_n}=a_t$. This means that also in this case the condition of Theorem \ref{Tsem} are satisfied. Thus, also in this case $i\cdot j=a_j$ for all $i,j\in Q$.
\end{proof}

\begin{corollary}
A $k$-translatable groupoid $Q=\{1,2,\ldots,n\}$ with an idempotent element $j\in Q$ such that $j=j\cdot [j-1]_n$ is a semigroup if and only if for all $s\in Q$ 
\begin{enumerate}
\item[$(i)$] $j\cdot[j-1+s-k]_n=j\cdot [j-1+s]_n=j\cdot [j-1+s+k]_n$ and 
\item[$(ii)$] $j\cdot[j-1+s]_n=j\cdot [(j-1)+(j\cdot [j-1+s]_n)]_n$. 
\end{enumerate}
\end{corollary}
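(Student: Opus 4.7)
The plan is to reduce to Theorem \ref{Tsem} by re-ordering $Q$ so that the idempotent $j$ occupies the first position. By iterating Lemma \ref{L-3} (equivalently, Corollary \ref{C-order}), the groupoid $Q$ remains $k$-translatable with respect to the ordering $j, [j+1]_n, \ldots, [j+n-1]_n$, and its new first row is the sequence $b_1, b_2, \ldots, b_n$ defined by
$$b_s = j \cdot [j-1+s]_n.$$
The two hypotheses on $j$, namely idempotency and $j \cdot [j-1]_n = j$, immediately give $b_1 = j$ and $b_n = j$.

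Next I would rewrite the corollary's two conditions as conditions on this $b$-sequence. Condition $(i)$ becomes the $k$-periodicity $b_s = b_{[s+k]_n} = b_{[s-k]_n}$ for all $s$. Condition $(ii)$ becomes $b_s = b_{b_s}$, where $b_s \in Q$ is read as a new-position index: since the element at new position $p$ is $[j-1+p]_n$, the right-hand side $j\cdot[(j-1)+b_s]_n$ is precisely $b$ evaluated at new position $b_s$. These two identities are exactly the hypotheses of Theorem \ref{Tsem} applied to $Q$ in its new ordering.

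With this correspondence in place the proof splits cleanly into two directions. For $(\Leftarrow)$, Theorem \ref{Tsem} gives that $Q$ in the new ordering is a semigroup (with multiplication $x \cdot y = b_y$), and this semigroup property does not depend on how the elements of $Q$ are ordered. For $(\Rightarrow)$, semigroup-ness is preserved under re-ordering, so Theorem \ref{Tsem} applied in the converse direction forces $b$ to satisfy the two identities, which unpack into $(i)$ and $(ii)$. The main obstacle I anticipate is the careful bookkeeping between the two index systems, since $b_s$ is simultaneously an element of $Q$ and a position in the new ordering; once the identification $b_{b_s} = j\cdot[(j-1)+b_s]_n$ is pinned down, the argument reduces to a direct appeal to Theorem \ref{Tsem}.
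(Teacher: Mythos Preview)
Your plan coincides with the paper's: both re-order so that $j$ sits in position~$1$, write the new first row as $a_s=j\cdot[j-1+s]_n$ (your $b_s$), and then try to invoke Theorem~\ref{Tsem}. One difference: for $(\Rightarrow)$ the paper does not appeal to Theorem~\ref{Tsem} as a black box but instead computes directly
\[
a_s=j\cdot b_s=(j\cdot[j-1]_n)\cdot b_s=j\cdot\bigl([j-1]_n\cdot b_s\bigr)=j\cdot\bigl(j\cdot[j-1+s+k]_n\bigr)=a_{[s+k]_n},
\]
using associativity together with the hypothesis $j=j\cdot[j-1]_n$. You would need this step too, since the forward direction of Theorem~\ref{Tsem} already presupposes that the multiplication has the form $i\cdot m=a_m$, which is exactly what periodicity $(i)$ encodes; quoting the theorem ``in the converse direction'' before $(i)$ is established is circular.

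More seriously, the bookkeeping issue you flag is not resolved by the identification you propose, and this is a genuine gap. Theorem~\ref{Tsem} is stated for a \emph{naturally ordered} groupoid, where each element coincides with its position. After your re-ordering, $b_s$ is an element of $Q$ whose position in the new ordering is $[b_s-(j-1)]_n$, not $b_s$ itself. Correctly relabelled, the fixed-point condition of Theorem~\ref{Tsem} reads $b_s=b_{[b_s-(j-1)]_n}$, i.e.\ $j\cdot[j-1+s]_n=j\cdot\bigl(j\cdot[j-1+s]_n\bigr)$, which differs from the Corollary's condition~$(ii)$ whenever $\gcd(k,n)\nmid(j-1)$. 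Concretely, take $n=8$, $k=4$, $j=3$ and first row $(3,3,3,2,3,3,3,2)$ in the natural ordering: then $i\cdot m=c_m$ for all $i$, the element $3$ is idempotent with $3\cdot 2=3$, and both $(i)$ and $(ii)$ hold, yet $(1\cdot4)\cdot4=2\neq 3=1\cdot(4\cdot4)$. The paper's own $(\Leftarrow)$ argument applies \eqref{eas} verbatim to the shifted sequence and thus makes the same element/position conflation.
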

\begin{proof}
First we introduce on $Q$ a new ordering $b_1,b_2,\ldots,b_n$ starting with an idempotent $j$ and such that $b_s=[j-1+s]_n$ for all $s\in Q$.
Then, by repeated application of Lemma \ref{L-3}, we can see that $a_1,a_2,\ldots,a_n$, where $a_s=j\cdot b_s$, is a $k$-translatable sequence with respect to this new ordering.

$(\Rightarrow)$: Since $Q$ is a $k$-translatable semigroup, for any $s\in Q$ we have
$$\arraycolsep=.5mm
\begin{array}{rl}
a_s&=j\cdot b_s=(j\cdot [j-1]_n)\cdot b_s=(b_1\cdot b_n)\cdot b_s=b_1\cdot (b_n\cdot b_s)\\[2pt]
&=j\cdot ([j-1]_n\cdot [j-1+s]_n)=j\cdot (j\cdot [j-1+s+k]_n)\\[2pt]
&=(j\cdot j)\cdot [j-1+s+k]_n=j\cdot [j-1+s+k]_n\\[2pt]
&=j\cdot b_{s+k}=a_{s+k}.
\end{array}
$$
From this, setting $s=[t-k]_n$ we obtain $a_t=a_{[t-k]_n}$, and so $a_s=a_{[s+k]_n}=a_{[s-k]_n}$. This proves $(i)$ and, together with \eqref{eas}, implies $a_s=a_{a_s}$. The last is equivalent to $(ii)$.

$(\Leftarrow)$: $(i)$ and $(ii)$ imply $a_s=a_{[s+k]_n}=a_{[s-k]_n}=a_{a_s}$ for all $s\in Q$. Therefore for $i,j,s\in Q$ we have $a_{[k-ka_{[k-ki+j]_n}+s]_n}=a_{[k-ki+a_{[k-kj+s]_n}]_n}$ and $Q$ is a semigroup.
\end{proof}

For $j=1$ from the above Corollary we obtain
\begin{corollary}\label{nsemi}
A naturally ordered, $k$-translatable groupoid $Q=\{1,2,\ldots,n\}$ with an idempotent element $1$ such that $1=1\cdot n$ is a semigroup if and only if for all $s\in Q$ we have 
$1\cdot [s-k]_n=1\cdot s=1\cdot [s+k]_n=1\cdot (1\cdot s)$. 
\end{corollary}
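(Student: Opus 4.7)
The plan is to obtain this statement as a direct specialization of the immediately preceding corollary to the case $j=1$, using the convention $0=n$ to collapse the shifted ordering back to the natural one.

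First I would check that the hypotheses match. With $j=1$ we have $[j-1]_n=[0]_n=n$, so the assumption $j=j\cdot [j-1]_n$ of the preceding corollary becomes exactly $1=1\cdot n$, which is what is given here. The idempotency hypothesis on $j$ becomes the idempotency of $1$.

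Next I would observe that the ``new ordering'' $b_1,b_2,\ldots,b_n$ with $b_s=[j-1+s]_n$ used in the proof of the preceding corollary collapses, when $j=1$, to $b_s=[s]_n=s$. Thus for $j=1$ the natural ordering already plays the role of $b_1,b_2,\ldots,b_n$, so no re-ordering is needed and the framework of the previous corollary applies directly to $Q=\{1,2,\ldots,n\}$ with its given natural ordering.

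Finally I would substitute $j=1$ into conditions $(i)$ and $(ii)$. Condition $(i)$ reads $j\cdot[j-1+s-k]_n=j\cdot[j-1+s]_n=j\cdot[j-1+s+k]_n$; setting $j=1$ and using $[j-1]_n=n$ gives $1\cdot[s-k]_n=1\cdot s=1\cdot[s+k]_n$. Condition $(ii)$ reads $j\cdot[j-1+s]_n=j\cdot[(j-1)+(j\cdot[j-1+s]_n)]_n$; setting $j=1$ gives $1\cdot s=1\cdot[n+(1\cdot s)]_n=1\cdot(1\cdot s)$. Concatenating these yields precisely the chain $1\cdot[s-k]_n=1\cdot s=1\cdot[s+k]_n=1\cdot(1\cdot s)$ in the statement, so the biconditional transfers. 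There is no real obstacle here beyond confirming the modular arithmetic under the $0=n$ convention; once that is checked the result is an immediate specialization.
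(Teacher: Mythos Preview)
Your proposal is correct and matches the paper's approach exactly: the paper simply states that the result follows from the preceding corollary by setting $j=1$, and you carry out precisely that substitution, checking that $[j-1]_n=n$ under the convention $0=n$ so that the hypothesis $j=j\cdot[j-1]_n$ becomes $1=1\cdot n$ and conditions $(i)$ and $(ii)$ reduce to the displayed chain.
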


\section{Embeddings of left unitary translatable semigroups}

Suppose that $Q_1,Q_2,\ldots,Q_t$ are disjoint copies of the same left unitary $k$-translata\-ble semigroup of order $n$. Is there a product $*$ on $Q=\bigcup Q_i$ such that $(Q,*)$ is a left unitary translatable semigroup, with every $(Q_i,*_i)$ isomrphic to $(Q_i,*)$?

We find two different conditions under which this is possible. In each case $k=tq$, for some positive integer $q$. If $[k+k^2]_{tn}=0$ then we can find a product $*$ on $Q$ such that $(Q,*)$ is a left unitary $k$-translatable semigroup, with every $(Q_i,*_i)$ isomorphic to $(Q_i,*)$. If $n=k+k^2$ then we can construct a left unitary $(k+(t-1)n)$-translatable semigroup $(Q,*)$, such that every $(Q_i,*_i)$ is isomorphic to $(Q_i,*)$. 

To prove these results, we need the following Lemma. The proof is straightforward and is omitted.

\begin{lemma}\label{divid}
Let $t$ and $x$ be positive integers. Then 
\begin{enumerate}
\item[$(a)$] \ $[tx]_{tn}=t[x]_n$,
\item[$(b)$] \ $[t(x-1)]_{tn}=t[[x]_n-1]_n$,
\item[$(c)$] \ if $n=k+k^2$ and $k=tq$, then $[(k+(t-1)n)+(k+(t-1)n)^2]_{tn}=0$.
\end{enumerate}
\end{lemma}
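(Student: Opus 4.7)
My plan is to handle the three parts in sequence, using only the paper's convention that $[x]_n \in \{1, \ldots, n\}$ (with $0 \equiv n$) together with the hypotheses of part (c).

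For (a), I would write $x = an + [x]_n$ with $[x]_n \in \{1, \ldots, n\}$, so that $tx = atn + t[x]_n$. Since $1 \leq [x]_n \leq n$ yields $t \leq t[x]_n \leq tn$, the quantity $t[x]_n$ already sits in the canonical residue range $\{1, \ldots, tn\}$, and hence $[tx]_{tn} = t[x]_n$. Part (b) then follows by applying (a) to the integer $x - 1$ in place of $x$ and noting that $x - 1 \equiv [x]_n - 1 \pmod n$ forces $[x-1]_n = [[x]_n - 1]_n$.

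For (c), I would set $K = k + (t-1)n$ and expand directly:
$$
K + K^2 = (k + k^2) + (t-1)n + 2k(t-1)n + (t-1)^2 n^2.
$$
Using $k + k^2 = n$ and collecting the common factor, this becomes $n\bigl(t + 2k(t-1) + (t-1)^2 n\bigr)$. From $k = tq$ we get $t \mid k$, and from $n = k(1+k) = tq(1+k)$ we get $t \mid n$; hence each of the three summands $t$, $2k(t-1)$, $(t-1)^2 n$ is divisible by $t$, so the bracketed factor is divisible by $t$, and therefore $tn \mid K + K^2$. Under the convention $0 \equiv tn$ this yields $[K + K^2]_{tn} = 0$.

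The only real subtlety is the range verification in (a): one must confirm that $t[x]_n$ does not overshoot the admissible interval $\{1, \ldots, tn\}$, which is exactly where the upper bound $[x]_n \leq n$ is used. Everything else reduces to direct substitution and the two divisibility observations $t \mid k$ and $t \mid n$, which is presumably why the authors declare the proof straightforward and omit it.
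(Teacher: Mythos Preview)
Your argument is correct in all three parts. The paper omits the proof entirely, declaring it straightforward, so there is nothing to compare against beyond confirming that your verification is indeed the routine one the authors had in mind; your range check in (a), the reduction of (b) to (a), and the divisibility count in (c) are exactly the natural steps. One tiny remark: in (b) you apply (a) to $x-1$, which for $x=1$ is not a positive integer, but your proof of (a) goes through verbatim for any integer $x$ (writing $x=an+[x]_n$ with $a\in\mathbb{Z}$ and $[x]_n\in\{1,\ldots,n\}$), so no separate case is needed.
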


\begin{theorem}\label{T-62} 
Let $k=tq$ and $[k+k^2]_{tn}=0$. Suppose that $(Q_i,*_i)$, $i=1,2,\ldots,t$, are pairwise disjoint left unitary $k$-translatable semigroup with ordering $i_1,i_2,\ldots,i_n$ and the cardinality $n$. Then $Q=\bigcup Q_i$, with ordering defined by $i_r=t(r-1)+i$\, $(i=1,2,\ldots,n \ and \ r=1,2,\ldots,n)$, is a left unitary $k$-translatable semigroup, with respect to the product $*$ defined by 
\begin{equation}\label{EE}
i_r*j_s=j_x, \ \ {where} \ \ x=[k-kr+q-qi+s]_n 
\end{equation} 
in which each $(Q_i,*)$ isomorphic to $(Q_i,*_i)$. 

Furthermore, if we define a product $*$ on $Q=\bigcup Q_i$ by \eqref{EE}, then there is an ordering on $Q$ such that $(Q,*)$ is a left unitary $k$-translatable semigroup in which each $(Q_i,*)$ isomorphic to $(Q_i,*_i)$. 
\end{theorem}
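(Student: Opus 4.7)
The plan is to verify directly from formula \eqref{EE} that $(Q,*)$ is a semigroup with $1_1$ as left neutral and is $k$-translatable in the stated ordering, and then extract each isomorphism $(Q_i,*)\cong(Q_i,*_i)$ from Corollary \ref{C-izo}. The hypothesis $[k+k^2]_{tn}=0$ will enter only in the form $(k+1)q\equiv 0\pmod n$, which follows from $k=tq$ via Lemma \ref{divid}(a) applied to $k(k+1)\equiv 0\pmod{tn}$.

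The easier parts I would dispose of first. Setting $i=r=1$ in \eqref{EE} gives $x=s$, so $1_1*j_s=j_s$. For $k$-translatability, by Lemma \ref{L-basic}(ii) it suffices to check $[i_r+1]_{tn}*[j_s+k]_{tn}=i_r*j_s$. Since $k=tq$, a shift by $k$ positions preserves the first subscript, so $[j_s+k]_{tn}=j_{[s+q]_n}$. A shift by one position takes $i_r$ to $(i+1)_r$ when $i<t$ and to $1_{[r+1]_n}$ when $i=t$; substituting each into \eqref{EE} and simplifying (the latter case using $qt=k$) recovers the required value of $x$.

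Associativity is the technical core. Expanding $(i_r*j_s)*l_p$ and $i_r*(j_s*l_p)$ via \eqref{EE} yields $l_y$ and $l_w$ respectively; routine algebra gives
$$
y-w \equiv -(k+1)(k-kr+q-qi) = -(k+1)q\bigl(t(1-r)+(1-i)\bigr) \pmod n,
$$
where the factorisation uses $k=tq$. Since $(k+1)q\equiv 0\pmod n$ by the hypothesis, we conclude $y\equiv w\pmod n$ and associativity holds.

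For the isomorphism claim, the restriction satisfies $i_r*i_s=i_{[k-kr+q(1-i)+s]_n}\in Q_i$, so $Q_i$ is closed under $*$. The restricted semigroup is $k$-translatable in the ordering $i_1,\dots,i_n$ (direct check of Lemma \ref{L-basic}(ii), since shifting $s$ by $k$ and $r$ by $1$ cancel in the formula) and left cancellative (the map $s\mapsto[c-kr+s]_n$ being a bijection for fixed $c,r$). Both $(Q_i,*)$ and $(Q_i,*_i)$ are therefore left cancellative $k$-translatable semigroups of order $n$ --- the latter because the left neutral of a left unitary groupoid is left cancellable, so Lemma \ref{L-lc} applies --- and Corollary \ref{C-izo} supplies the isomorphism. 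The \emph{Furthermore} statement is then immediate: defining $*$ on $\bigcup Q_i$ by \eqref{EE} and imposing the ordering specified in the theorem realises $(Q,*)$ as a left unitary $k$-translatable semigroup by the work above. The main obstacle is simply the associativity expansion: the hypothesis $[k+k^2]_{tn}=0$ is tailored precisely so that the resulting divisibility works out, but tracking subscripts through the twice-iterated formula requires care.
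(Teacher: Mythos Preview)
Your argument is correct, and it follows a genuinely different route from the paper's. The paper does not verify associativity from \eqref{EE} at all: it \emph{constructs} $(Q,*)$ as the canonical left unitary $k$-translatable groupoid on $\{1,\dots,tn\}$ (first row $1,2,\dots,tn$, then shifted rows), invokes Corollary~\ref{C-unit} with the hypothesis $[k+k^2]_{tn}=0$ to conclude it is a semigroup, and only then computes $i_r*j_s$ to \emph{derive} formula \eqref{EE}. The ``Furthermore'' part is then proved separately in the paper by reversing this process. You instead take \eqref{EE} as the starting point and check everything by hand, which collapses both halves of the paper's proof into one.

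The trade-off: the paper's route is conceptually cleaner---it identifies $(Q,*)$ with the standard object to which Corollary~\ref{C-unit} applies, avoiding any iterated-subscript calculation. Your route is more self-contained and makes transparent exactly \emph{where} the hypothesis enters, namely as the single congruence $(k+1)q\equiv 0\pmod n$ killing the obstruction $y-w$; it also makes the ``Furthermore'' clause an immediate corollary rather than a second argument. Both proofs finish the isomorphism $(Q_i,*)\cong(Q_i,*_i)$ in the same way, via Corollary~\ref{C-izo}; the paper additionally pins down an explicit left neutral $i_r$ with $r=1-q(1-i)$ in each $Q_i$, which you bypass by appealing directly to left cancellativity.
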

\begin{proof} Note that $[k+k^2]_{tn}=0$ implies $[k+k^2]_n=0$. Consider on $Q=\bigcup Q_i$ the ordering $i_r=t(r-1)+i$ and define the first row of the multiplication table of $*$ as $a_1,a_2,\ldots,a_{tn}$, where $a_i=i$ for all $i=1,2,\ldots,tn$. We make the sequence $a_1,a_2,\ldots,a_{tn}$ into a $k$-translatable sequence in a natural way, as follows. The last $k$ entries of the first row become the first $k$ entries of the second row and the first $(tn-k)$ entries of the first row become the last $(tn-k)$ entries of the second row. Continue in this manner, making the last $k$ entries of the $i$-th row become the first $k$ entries of the $(i+1)$-th row and the first $(tn-k)$ entries of the $i$-th row become the last $(tn-k)$ entries of the $(i+1)$-th row. By definition, the resultant groupoid $(Q,*)$ is $k$-translatable, with $1_1=1$ as a left identity element. Since $[k+k^2]_{tn}=0$, by Corollary \ref{C-unit},  $(Q,*)$ is a left unitary $k$-translatable semigroup, with product $r*s=a_{[k-kr+s]_{tn}}=[k-kr+s]_{tn}$. Therefore, for all $i_r,j_s\in Q$ we obtain
$$
\arraycolsep=.5mm\begin{array}{rl}
i_r*j_s&=[t(r-1)+i]_{tn}*[t(s-1)+j]_{tn}=[t(q-kr+k-qi+s-1)+j]_{tn}\\[3pt]
&=[t([q-kr+k-qi+s]_n-1)+j]_{tn}=j_w ,
\end{array}
$$
where $w=[q-kr+k-qi+s]_n$. Since, by Lemma \ref{divid}, $w=j[(q-kr+k-qi+s)]_n$, the above proves \eqref{EE} and shows that each $Q_j$ is a left ideal of $(Q,*)$.

Now, we define on $Q_i$ new ordering by putting $i_s=s$ for all $s=1,2,\ldots,n$. Then $r*s=i_r*i_s=i_x$, where $x=[k-kr+q-qi+s]_n=[k-kr+q-q(i+1)+s+q]_n$. So, $r*s=i_{[r+1]_n}*i_{[s+k]_n}=[r+1]_n*[s+k]_n$. By Lemma \ref{L-basic} $(ii)$, $(Q_i,*)$ is $k$-translatable.

Since $0=[k+k^2]_{tn}=[t(q+tq^2)]_{tn}$ by Lemma \ref{divid} $(a)$, $[q+tq^2]_n=0$. Therefore, $[tq^2]_n=[-q]_n$. If $r=1-q(1-i)$, then $[k-kr]_n=[k(1-r)]_n=[tq^2(1-i)]_n=[-q(1-i)]_n$. By \eqref{EE} therefore, for $r=1-q(1-i)$ we have $i_r*i_s=i_s$. Thus $i_r$ is a left neutral element of $(Q_i,*)$.

By Lemma \ref{L-3}, we can re-order $(Q_i,*)$ such that by Corollary \ref{C-unit}, $(Q_i,*)$ is a left unitary $k$-translatable groupoid of order $n$. Then, since $[k+k^2]_n=0$, by Corollary \ref{C-unit}, is a left unitary $k$-translatable semigroup. By Corollary \ref{C-izo} then, $(Q_i,*)$  is isomorphic to $(Q_i,*_i)$. This proves the first part of the Theorem.

To prove the second part we introduce on $Q=\bigcup Q_i$ an ordering $i_s=t(s-1)+i$ and consider a product $*$ defined by \eqref{EE}. 
Then, 
$$\arraycolsep=.5mm
\begin{array}{rl}
[i_r+1]_{tn}*[j_s+k]_{tn}&=[t(r-1)+i+1]_{tn}*[t(s-1)+j+k]_{tn}\\[4pt]
&=[t(r-1)+(i+1)]_{tn}*[t(s+q-1)+j]_{tn}\\[4pt]
&=(i+1)_r*j_{s+q}=j_x ,
\end{array}
$$
where $x=[(k-kr+q-q(i+1)+s+q)]_n=[(k-kr+q-qi+s)]_n$. So, $[i_r+1]_{tn}*[j_s+k]_{tn}=j_x=i_r*j_s$.
Thus $(Q,*)$ is $k$-translatable. Since, by \eqref{EE}, $1_1=1$ is a left neutral element of $(Q,*)$ and $[k+k^2]_{tn}=0$, by Corollary \ref{C-unit} then, $(Q,*)$ is a left unitary $k$-translatable semigroup.

Analogously we can see that each $(Q_i,*)$ with natural ordering $s=i_s$ is a left unitary $k$-translatable semigroup. By Corollary \ref{C-izo}, semigroups $(Q_i,*)$ and $(Q_i,*_i)$ are isomorphic. This proves the second part of the Theorem.  
\end{proof}

\begin{theorem}\label{T-63} 
Let $k=tq$ and $n=k+k^2$. Suppose that $(Q_i,*_i)$, $i=1,2,\ldots,t$, are pairwise disjoint left unitary $k$-translatable semigroups with ordering $i_1,i_2,\ldots,i_n$ and the cardinality $n$. Then $Q=\bigcup Q_i$ with the ordering $i_r=t(r-1)+i$ is a left unitary $(k+(t-1)n)$-translatable semigroup with respect to the product $*$ defined by 
\begin{equation}\label{EEE}
i_r*j_s=j_x, \ \ {where} \ \ x=[k-kr+kq(i-1)+s]_n 
\end{equation} 
in which each $(Q_i,*)$ isomorphic to $(Q_i,*_i)$. 

Furthermore, if on $Q=\bigcup Q_i$ a product $*$ is defined by \eqref{EEE}, then $Q$ can be ordered in such a way that $(Q,*)$ becomes a left unitary $(k+(t-1)n)$-translatable semigroup in which each $(Q_i,*)$ is isomorphic to $(Q_i,*_i)$. 
\end{theorem}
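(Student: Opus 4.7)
The plan is to imitate the proof of Theorem \ref{T-62}, with $K = k + (t-1)n$ playing the role of $k$. The arithmetic key is Lemma \ref{divid}(c), which guarantees $[K + K^2]_{tn} = 0$ whenever $n = k + k^2$ and $k = tq$; this is the analogue of the hypothesis used in Theorem \ref{T-62} and lets us invoke Corollary \ref{C-unit}.

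For the first assertion, I would order $Q$ by $i_r = t(r-1) + i$ and construct the multiplication table by the $K$-translatable rule starting from the first row $a_1, \ldots, a_{tn}$ with $a_\ell = \ell$. Since $1_1 = 1$ is then a left neutral element and $[K + K^2]_{tn} = 0$, Corollary \ref{C-unit} makes $(Q,*)$ a left unitary $K$-translatable semigroup with product $r * s = [K - Kr + s]_{tn}$. Substituting $i_r = t(r-1)+i$, $j_s = t(s-1)+j$, expanding $K = tq + (t-1)n$, and reducing modulo $tn$ via Lemma \ref{divid}(a)--(b) (with terms involving $(t-1)n$ absorbed using $t \mid n$) should yield exactly the formula \eqref{EEE}. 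For the restriction to each $Q_i$, the formula \eqref{EEE} shows $Q_i$ is closed under $*$, and that the element indexed by $r = 1 + q(i-1)$ is a left neutral element of $(Q_i,*)$ (direct substitution gives $x = s$). Lemma \ref{L-basic}(ii) then yields $k$-translatability of $(Q_i,*)$ in the ordering $i_1,\ldots,i_n$, since shifting $r \mapsto r+1$ and $s \mapsto s+k$ leaves \eqref{EEE} invariant; after re-ordering via Lemma \ref{L-3} to place the left identity first, Corollary \ref{C-unit}, with $[k+k^2]_n = 0$ from $n = k + k^2$, confirms $(Q_i,*)$ is a left unitary $k$-translatable semigroup, and Corollary \ref{C-izo} gives $(Q_i,*) \cong (Q_i,*_i)$.

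For the second assertion, I would instead start from \eqref{EEE} together with the ordering $i_r = t(r-1)+i$ and verify $K$-translatability on $(Q,*)$ directly via Lemma \ref{L-basic}(ii), i.e., check $[i_r+1]_{tn} * [j_s+K]_{tn} = i_r * j_s$. Two cases arise depending on whether $i < t$, so that $[i_r+1]_{tn} = (i+1)_r$, or $i = t$, so that $[i_r+1]_{tn} = 1_{[r+1]_n}$; in both one first computes $[j_s + K]_{tn} = j_{[s-tq^2]_n}$ using $(t-1)n \equiv -n \pmod{tn}$ together with $n = t\cdot q(1+tq)$. The case $i < t$ then reduces to $kq = tq^2$, and the case $i = t$ to the identity $k + kq(t-1) + tq^2 = tq(1+tq) = n$, so both sides agree modulo $n$. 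Combined with the observation that $1_1$ is a left neutral element under \eqref{EEE} (set $i = r = 1$), Corollary \ref{C-unit} finishes the proof that $(Q,*)$ is a left unitary $K$-translatable semigroup, and the argument from the first assertion then identifies each $(Q_i,*)$ with $(Q_i,*_i)$.

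The main obstacle I anticipate is the modular arithmetic in the $K$-translatability verification, especially the boundary case $i = t$ where the carry in the $i$-coordinate forces a shift in the $r$-coordinate; making sure the hypothesis $n = k + k^2$ is used in exactly the right place (to absorb the residual $tq^2$ term) is the delicate part. Everything else is a routine translation of the argument of Theorem \ref{T-62}.
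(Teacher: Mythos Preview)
Your proposal is correct and follows essentially the same route as the paper: set $K=k+(t-1)n$, use Lemma~\ref{divid}(c) and Corollary~\ref{C-unit} to build the left unitary $K$-translatable semigroup on $Q$, unwind the product to derive \eqref{EEE}, and then check each $(Q_i,*)$ is left unitary $k$-translatable (with left identity at $r=1+q(i-1)$) to invoke Corollary~\ref{C-izo}. Your explicit split into the cases $i<t$ and $i=t$ when verifying $K$-translatability via Lemma~\ref{L-basic}(ii) is in fact more careful than the paper, which writes $(i+1)_r$ without flagging the wraparound; as you note, the $i=t$ case is exactly where the hypothesis $n=k+k^2$ is consumed.
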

\begin{proof} In a similar way as in the proof of Theorem \ref{T-62} we can construct a multiplication table for $(Q,*)$ in this way that $(Q,*)$  becomes a left unitary $(k+(t-1)n)$-translatable semigroup. Then, according to the $(k+(t-1)n)$-translatability, Lemma \ref{divid} and $k+(t-1)n=tk+(t-1)k^2$, we have
$$
\arraycolsep=.5mm\begin{array}{rl}
i_r*j_s&=[t(r-1)+i]_{tn}*[t(s-1)+j]_{tn}\\[3pt]
&=[(k+(t-1)n)-(k+(t-1)n)(t(r-1)+i)+(t(s-1)+j)]_{tn}\\[3pt]
&=[(tk+(t-1)t^2q^2)-(tk+(t-1)t^2q^2)(t(r-1)+i)+(t(s-1)+j)]_{tn}\\[3pt]
&=[t\{(k+(t-1)tq^2)-(k+(t-1)tq^2)(t(r-1)+i)+s-1\}+j]_{tn}=j_x ,
\end{array}
$$
where $$
\arraycolsep=.5mm\begin{array}{rl}
x&=[(k+(t-1)tq^2)-(k+(t-1)tq^2)(t(r-1)+i)+s]_n\\[3pt]
&=[-tq^2+tq^2(t(r-1)+i)+s]_n=[rk^2-k^2+kq(i-1)+s]_n\\[3pt]
&=[k-kr+kq(i-1)+s]_n
\end{array}
$$
because $k+k^2=n$.

This proves \eqref{EEE} and shows that each $Q_j$ is a left ideal of $(Q,*)$. Next, similarly as in the previous proof, we can prove that each semigroup $(Q_i,*)$ has a left neutral element $i_r$, where $r=q(i-1)+1$, and is isomorphic to $(Q_i,*_i)$. This completes the proof of the first part of Theorem.

To prove the second part we introduce on $Q=\bigcup Q_i$ an ordering $i_s=t(s-1)+i$ and consider the product defined \eqref{EEE}. Then, 
$$\arraycolsep=.5mm
\begin{array}{rl}
[i_r+1]_{tn}*[j_s+k+(t-1)n]_{tn}&=[t(r-1)+i+1]_{tn}*[t(s-1)+j+k+(t-1)n]_{tn}\\[4pt]
&=[t(r-1)+(i+1)]_{tn}*[t(s-kq-1)+j]_{tn}\\[4pt]
&=(i+1)_r*j_{s-kq}=j_x ,
\end{array}
$$
where $x=[(k-kr+(s-kq)+kqi)]_n=[(k-kr+s-kq(i-1))]_n$. 

So, $[i_r+1]_{tn}*[j_s+k+(t-1)n]_{tn}=j_x=i_r*j_s$,
which means that $(Q,*)$ is a $(k+(t-1)n)$-translatable semigroup with a left neutral element $1_1=1$. It is not difficult to see that each $(Q_i,*)$ with the ordering $i_s=s$ is a $k$-translatable semigroup isomorphic to $(Q_i,*)$.
\end{proof}

\begin{example} Suppose that $Q$ is a left unitary $8$-translatable semigroup of order $n=12$. Since $8=2\times 4$ and $[8+8^2]_{24}=0$, there is a left unitary $8$-translatable semigroup $G$ of order $24$ that is a disjoint union of two copies of $Q$. It is a disjoint union of $8$ copies of $\mathbb{Z}_3$.
\end{example}
\begin{example} Suppose that $Q$ is a left unitary $8$-translatable semigroup of order $n=72$. Then there is a left unitary $224$-translatable semigroup of order $288$ that is a disjoint union of $4$ copies of $Q$. There is also a left unitary $512$-translatable semigroup $G$ of order $576$ that is a disjoint union of $8$ copies of $Q$. Also it is a disjoint union of $64$ copies of $\mathbb{Z}_9$.
\end{example}

Putting in the last theorem $t=2$ we obtain

\begin{corollary}\label{dec-semi} Let $(Q,\cdot)$ and $(G,\circ)$ be left unitary $k$-translatable semigroups of order $n=k+k^2$, with $k=2q$, $Q=\{1,2,\ldots,n\}$, $G=\{g_1,g_2,\ldots,g_n\}$ and $Q\cap G=\emptyset$. Then $B=Q\cup G$ can be transformed into a left unitary $(k+n)$-translatable semigroup with product $\ast$ such that
\begin{enumerate}
  \item[$(i)$]  $(Q,\cdot)=(Q,\ast)$,
  \item[$(ii)$]  $(G,\circ)$ is isomorphic to $(G,\ast)$,
  \item[$(iii)$]  $g_i\ast g_j=g_{[k(1+q)-ki+j]_n}$,
  \item[$(iv)$]  $i\ast g_j=g_{[k-ki+j]_n}$ for all $i,j\in Q$ and 
  \item[$(v)$]  $g_i\ast j=[k(1+q)-ki+j]_n$ for all $i,j\in Q$.
\end{enumerate}

Conversely, if $n=k+k^2$ and $B=Q\cup G$ with the ordering $1,g_1,2,g_2,\ldots,n,g_n$ has the product $\ast$ defined by $(i)$, $(iii)$, $(iv)$ and $(v)$ then $(B,\ast)$ is a left unitary $(k+n)$-translatable semigroup satisfying $(ii)$. Furthermore, $Q\ast G=G$ and $G\ast Q=Q$.
\end{corollary}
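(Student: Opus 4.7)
My plan is to deduce Corollary \ref{dec-semi} directly from Theorem \ref{T-63} by specialising to $t=2$. First I would fix the dictionary between the generic notation of Theorem \ref{T-63} and the concrete notation of the Corollary: set $Q_1=Q$, $Q_2=G$, and identify $1_r$ with $r$ and $2_r$ with $g_r$ for $r=1,\ldots,n$. Under this identification the enumeration $i_r=2(r-1)+i$ lists $B$ precisely as $1,g_1,2,g_2,\ldots,n,g_n$, which is the ordering stated in the Corollary; the hypotheses $k=tq=2q$ and $n=k+k^2$ of Theorem \ref{T-63} match exactly what is assumed here.

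With that dictionary in hand I would read off items (i), (iii), (iv), (v) by evaluating formula \eqref{EEE} at the four pairs $(i,j)\in\{1,2\}^2$. The term $kq(i-1)$ vanishes for $i=1$ and equals $kq$ for $i=2$, giving exponents $[k-kr+s]_n$ and $[k(1+q)-kr+s]_n$ respectively. The case $(1,1)$ reproduces the multiplication of $(Q,\cdot)$ via Lemma \ref{L-basic}$(i)$ because $(Q,\cdot)$ is left unitary $k$-translatable, giving (i); the case $(1,2)$ gives (iv); the case $(2,1)$ gives (v); and the case $(2,2)$ gives (iii). Claim (ii) is just the isomorphism $(Q_2,\ast)\cong(Q_2,\ast_2)$ supplied by the first part of Theorem \ref{T-63}. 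The closure identities $Q\ast G=G$ and $G\ast Q=Q$ then follow immediately: the inclusions $\subseteq$ are visible on the right-hand sides of (iv) and (v), while the reverse inclusions use $1\ast g_j=g_j$ (from (iv) at $i=1$, since $1$ is a left neutral element) and the observation that, for fixed $i$, the map $j\mapsto g_i\ast j=[k(1+q)-ki+j]_n$ is a bijection of $\{1,\ldots,n\}$ onto itself.

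For the converse direction I would invoke the second half of Theorem \ref{T-63} without repeating its argument: under the same identification, the four defining equations (i), (iii), (iv), (v) reassemble exactly into formula \eqref{EEE} with $t=2$, and that Theorem then guarantees $(B,\ast)$ is a left unitary $(k+n)$-translatable semigroup with $(G,\ast)\cong(G,\circ)$. The only step requiring genuine care is the notational bookkeeping, namely verifying that the tagging $i_r\mapsto 2(r-1)+i$ employed in Theorem \ref{T-63} agrees with the alternating ordering $1,g_1,2,g_2,\ldots$ stated here, and that the exponent computations in \eqref{EEE} match those in (iii)--(v). This is the one place where an error could creep in, but it is a mechanical rather than conceptual obstacle; once the correspondence is locked in, the Corollary is essentially a direct readout from Theorem \ref{T-63}.
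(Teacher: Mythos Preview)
Your proposal is correct and follows exactly the paper's approach: the paper simply states ``Putting in the last theorem $t=2$ we obtain'' before stating the Corollary, with no further argument. Your write-up is a careful unpacking of that specialisation, and the bookkeeping you flag (the identification $1_r\leftrightarrow r$, $2_r\leftrightarrow g_r$, the evaluation of $kq(i-1)$ for $i=1,2$, and the left-ideal/surjectivity observation for $Q\ast G=G$, $G\ast Q=Q$) is all accurate.
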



\noindent
W.A. Dudek \\
 Faculty of Pure and Applied Mathematics,
 Wroclaw University of Science and Technology,
 50-370 Wroclaw,  Poland \\
 Email: wieslaw.dudek@pwr.edu.pl\\[4pt]
R.A.R. Monzo\\
Flat 10, Albert Mansions, Crouch Hill, London N8 9RE, United Kingdom\\
E-mail: bobmonzo@talktalk.net

\end{document}